\setlist[enumerate]{label = \textup{\textup{(\alph*)}},ref = \textup{(\alph*}),itemsep=1ex}
\theoremstyle{plain}
\newtheorem{thm}{Theorem}[section]
\newtheorem{lem}[thm]{Lemma}
\newtheorem{prop}[thm]{Proposition}
\newtheorem{cor}[thm]{Corollary}
\theoremstyle{definition}
\newtheorem{dfn}[thm]{Definition}
\newtheorem{ex}[thm]{Example}
\newtheorem{rem}[thm]{Remark}
\newtheorem{ass}[thm]{Assumption}
\numberwithin{equation}{section}
\numberwithin{figure}{section}
\def\Z{\mathbb{Z}}\def\Q{\mathbb{Q}}\def\C{\mathbb{C}}\def\K{\mathbb{K}}
\DeclareMathOperator\id{id}
\DeclareMathOperator\Ker{Ker}\DeclareMathOperator\Coker{Coker}
\def\U{\mathrm{U}}\def\SU{\mathrm{SU}}
\def\cA{\mathcal{A}}\def\cH{\mathcal{H}}
\def\al{\alpha}
\def\be{\beta}
\def\ga{\gamma}
\def\de{\delta}
\def\ep{\epsilon}
\def\ze{\zeta}
\def\th{\theta}
\def\ka{\kappa}
\def\la{\lambda}
\def\si{\sigma}
\def\om{\omega}
\def\De{\Delta}
\def\La{\Lambda}
\def\Om{\Omega}
\def\Ga{\Gamma}
\def\Si{\Sigma}
\def\Th{\Theta}
\def\e#1\e{\begin{equation}#1\end{equation}}
\def\ea#1\ea{\begin{align}#1\end{align}}
\def\op{\oplus}
\def\ot{\otimes}
\def\iy{\infty}
\def\longra{\longrightarrow}
\def\t{\times}
\def\an#1{\langle #1 \rangle}
\def\6{\partial}
\DeclareMathOperator\ch{ch}
\DeclareMathOperator\ind{ind}
\DeclareMathOperator\PE{PE}
\DeclareMathOperator*\colim{colim}
\DeclareMathOperator\ev{ev}
\DeclareMathOperator\Map{Map}
\def\cla{\mathrm{cla}}
\def\top{\mathrm{top}}
\def\G{{B\U(1)}}
\def\F{{B\U}}
\def\y{c}
\def\z{\operatorname{ch}}
\def\ul{\underline}
\def\ol{\overline}
\DeclareMathOperator\rk{rk}
\tikzset{
  symbol/.style={
    draw=none,
    every to/.append style={
      edge node={node [sloped, allow upside down, auto=false]{$#1$}}}
  }
}
\begin{document}

\title[Homological Lie brackets and pushforward operations]{
Homological Lie brackets on moduli spaces and\\
pushforward operations in twisted K-theory
}
\author{Markus Upmeier}
\date{\today}

\begin{abstract}
We develop a general theory of pushforward operations for principal $G$-bundles equipped with a certain type of orientation.

In the case $G=\G$ and orientations in twisted K-theory we construct two pushforward operations, the projective Euler operation, whose existence was conjectured by Joyce, and the projective rank operation. We classify all stable pushforward operations in this context and show that they are all generated by the projective Euler and rank operation.

As an application, we construct a graded Lie algebra structure on the homology of a commutative H-space with a compatible $\G$-action and orientation. These play an important role in the context of wall-crossing formulas in enumerative geometry.
\end{abstract}

\keywords{Pushforward, transfer, or shriek operation; cohomology operation; twisted complex K-theory; Hopf space; graded Lie brackets; wall-crossing; moduli space; stacks}

\maketitle


\section{Introduction}
\label{s1}

Many traditional coarse moduli spaces $P$ are obtained from a moduli stack that has a scalar action by $\C^*$ on the morphism sets. Examples are moduli spaces of coherent sheaves, connections, or quiver representations. Topologically, the action leads to a principal $\G$-bundle $P\to {B=P/\G},$ where $\G$ is a topological group model for the classifying space of complex line bundles. Enumerative geometry studies the intersection theory of virtual fundamental classes in homology. These usually depend on auxiliary parameters and the resulting virtual fundamental classes should then be related by wall-crossing formulas. Joyce~\cite{Joy} describes a comprehensive, partly conjectural, new theory of wall-crossing formulas in which the relationship between the homology $H_*(P)$ of the coarse moduli space and the homology $H_*(B)$ in the sense of stacks plays a key role. The main purpose of this paper is to clarify this relationship.

A principal $\G$-bundle $P\to B$ determines a cohomology class in $H^3(B;\Z),$ whose image in $H^3(B;\Q)$ we denote by $\eta_P$. We say that $P$ is \emph{rationally trivial} if $\eta_P=0.$ In this case there is a K\"unneth decomposition $H^*(P;\Q)\cong \Q[c_1]\ot H^*(B;\Q),$ where $c_1$ is the generator of the cohomology of $\G\simeq\mathbb{CP}^\iy.$ In particular, there are many projections from $H^*(P)$ to $H^*(B).$ In the general case we will show that one can still construct a certain combination of these projections using additional data, called an \emph{orientation}. If $P$ is trivial, an orientation amounts to a class $\vartheta\in K(B)$ in complex K-theory. Let $r$ be the rank of $\vartheta.$ In this case the projective Euler operation is given by
\begin{align}
\label{BabyPE}
 H^*(P;\Q)&\overset{\smash{\pi_!^\vartheta}}{\longra} H^{*+2r+2}(B;\Q), &c_1^i\t\al&\longmapsto \al\cup c_{i+r+1}(\vartheta),
\end{align}
where $c_j(\vartheta)$ denotes the $j$\textsuperscript{th} Chern class of $\vartheta.$
We will prove that this globalizes to a construction for general $P,$ provided $\vartheta$ is replaced by a class $\th\in K_P(B)$ in \emph{twisted} complex K-theory, as we explain after a brief digression on twisted K-theory.

Let $P\to B$ be a principal $\G$-bundle. Let $\{U_i\}_{i\in I}$ be an open cover of $B$ and write $U_{ij}=U_i\cap U_j,$ etc. Given sections $\si_i$ of $P|_{U_i},$ the transition functions $\ga^P_{ij}\colon U_{ij}\to\G$ satisfy $\ga^P_{ij}\cdot\si_j|_{U_{ij}}=\si_i|_{U_{ij}}$ and classify complex line bundles $L_{ij}\to U_{ij}.$ The cocycle identity for $\ga^P_{ij}$ yields isomorphisms $\ga^P_{ijk}\colon L_{ij}|_{U_{ijk}}\ot L_{jk}|_{U_{ijk}}\to L_{ik}|_{U_{ijk}}.$ Each collection $\th=(\{\vartheta_i\}_{i\in I},\{\ga^\th_{ij}\}_{i,j\in I})$ of complex vector bundles $\vartheta_i\to U_i$ and isomorphisms $\ga^\th_{ij}\colon L_{ij}\ot\vartheta_j|_{U_{ij}}\to \vartheta_i|_{U_{ij}}$ satisfying the twisted cocycle identity $\ga_{ij}^\th\circ(\id_{L_{ij}}\ot\ga_{jk}^\th)=\ga_{ik}^\th\circ(\ga_{ijk}^P\ot \id_{\vartheta_k})$ determines a class in twisted K-theory $K_P(B).$ If $B$ is a compact Hausdorff space, we could define $K_P(B)$ in this way as a group completion; the official definition is given in \S\ref{s2}. Using the universal complex line bundle $V(1)\to\G,$ we can construct vector bundles $V(1)\boxtimes\vartheta_i$ over $\G\t U_i$ for which the isomorphisms $\{\ga^\th_{ij}\}$ can be viewed as descent data, yielding a vector bundle $\tilde\th$ over $P.$ Therefore, every twisted K-theory class $\th\in K_P(B)$ has an \emph{underlying complex K-theory class} $\tilde\th\in K(P).$ If $P$ is trivial, then $\th$ amounts to a class $\vartheta\in K(B)$ in ordinary K-theory and $\tilde\th=V(1)\boxtimes\vartheta.$

According to Atiyah--Segal~\cite[Prop.~8.8]{AtSe06}, characteristic classes for twisted K-theory classes of rank $r$ are in bijection with cohomology classes in $H^*(\F\t\{r\})$ that are invariant under the $\G$-action. The Chern class $c_{r+1}$ has this property and thus extends to a characteristic class $c_{r+1}(\th)$ for twisted K-theory classes $\th$ of rank $r$. To see the invariance property of $c_{r+1}$, observe that for any (virtual) complex vector bundle $\vartheta$ of rank $r$ and complex line bundle $L$ we have
\e
\label{cj-Tensor}
 c_j(L\ot\vartheta)=\sum_{k+\ell=j} \binom{r-\ell}{k} c_1(L)^k\cup c_\ell(\vartheta).
\e
We make use of binomial coefficients for integers, which we review in Appendix~\ref{sAppdx}. Putting $j=r+1$ in \eqref{cj-Tensor}, all binomial coefficients with $k\neq 0$ vanish, so $c_{r+1}(L\ot\vartheta)=c_{r+1}(\vartheta)$, thus verifying the invariance property of $c_{r+1}.$

The projective Euler class is similar to a transfer map in the sense of \cite[\S15]{MS}: the pull-push $\pi_!^\th\circ\pi^*$ is the multiplication by an `Euler class', which here is the characteristic class $\pi_!^\th(1_{H^*(P)})=c_{r+1}(\th)$ of Atiyah--Segal.

A key observation is that \eqref{cj-Tensor} implies that a change of trivialization $\ga\colon B\to\G$ in \eqref{BabyPE} leads to an automorphism of $H^*(P)$ under which $\pi_!^\vartheta$ becomes $\pi_!^{L\ot\vartheta},$ where $L$ is the line bundle corresponding to $\ga.$ This explains why orientations in twisted K-theory are needed.

Our first theorem summarizes the properties of the projective Euler class. To state it, we need more notation. A pullback diagram of principal\/ $\G$-bundles
\begin{equation}
\label{pullback-diagram}
\begin{tikzcd}
 P_1\arrow[r,"\Phi"]\dar{\pi_1} & P_2\dar{\pi_2}\\
 B_1\arrow[r,"\phi"] & B_2
\end{tikzcd}
\end{equation}
determines a pullback morphism $\Phi^*\colon K_{P_2}(B_2)\to K_{P_1}(B_1)$ in twisted K-theory.

The classifying space $\G$ will always be taken to be a topological abelian group $G.$ We can then define the \emph{dual principal $G$-bundle} $\breve P\to B$ by precomposing the action by the inversion in $G.$ Taking duals of vector bundles defines a map $K_P(B)\to K_{\breve P}(B),$ $\th\mapsto\breve\th,$ of twisted K-theory groups. The \emph{tensor product} $P_3=P_1\ot_G P_2$ of principal $G$-bundles $\pi_1\colon P_1\to B$ and $\pi_2\colon P_2\to B$ is the quotient of the fiber product $P_1\t_B P_2$ by the $G$-action $\ell_{P_1\t_B P_2}(g,(p_1,p_2))=(g^{-1}p_1,gp_2).$
\label{ConventionFPaction}
Write $p_1\ot_G p_2$ for the orbit of $(p_1,p_2)$ under this action. The action on the orbit space $P_3=P_1\ot_G P_2$ is defined by $\ell_{P_3}(g,p_1\ot_G p_2)=(gp_1)\ot_G p_2=p_1\ot_G(gp_2).$

Under the obvious homeomorphisms\/ $\pi_2^*(P_1)\cong P_1\t_B P_2\cong\pi_1^*(P_2)$ the map $\ka_3(p_1,p_2)=p_1\ot_G p_2$ may be regarded in two ways as a morphism of principal bundles, $\pi_1^*(P_2)\to P_3$ or $\pi_2^*(P_1)\to P_3,$ or as a principal $G$-bundle $P_1\t_B P_2\to P_3.$
The projection $\ka_1(p_1,p_2)=p_1$ can similarly be viewed as a principal $G$-bundle $\pi_1^*(P_2)\to P_1$ or as a morphism $\pi_2^*(P_1)\to P_1$ or $P_1\t_B P_2\to \breve P_1$; similarly for $\ka_2.$

\begin{thm}
\label{s1thm1}
For each principal\/ $\G$-bundle\/ ${\pi\colon P\to B}$ and orientation\/ $\th\in K_P(B)$ in twisted K-theory there is a \textbf{projective Euler operation}
\[
 \pi^\th_!\colon H^*(P;\Q)\longra H^{*+2r+2}(B;\Q),
\]
where\/ $r$ is the rank of $\th,$ uniquely determined by the following properties.
\begin{enumerate}
\item
Naturality:~For every pullback diagram \eqref{pullback-diagram} and for the pullback orientation\/ $\th_1=\Phi^*(\th_2)$ on\/ $P_1$ of rank\/ $r,$ there is a commutative square
\begin{equation}
\label{s1eqn3}
\begin{tikzcd}
H^*(P_1;\Q)\arrow[d,"(\pi_1)^{\th_1}_!"] & H^*(P_2;\Q)\arrow[l,"\Phi^*"']\arrow[d,"(\pi_2)^{\th_2}_!"]\\
H^{*+2r+2}(B_1;\Q) & H^{*+2r+2}(B_2;\Q)\mathrlap{.}\arrow[l,"\phi^*"']
\end{tikzcd}	
\end{equation}
\item
Stability:~For the pullback orientation\/ $\th\t S^1=\pi_P^*(\th)$ on\/ $P\t S^1,$ where $\pi_P\colon P\t S^1\to P$ is the projection, there is a commutative square
\begin{equation}
\label{PEstable}
\begin{tikzcd}[column sep=huge]
H^*(P;\Q)\ar[d,"\pi^\th_!"]\rar["{\t[S^1]}"] & H^{*+1}(P\t S^1;\Q)\ar[d,"{(\pi\t\id_{S^1})^{\th\t S^1}_!}"]\\
H^{*+2r+2}(B;\Q)\rar["{\t[S^1]}"] & H^{*+2r+3}(B\t S^1;\Q).
\end{tikzcd}
\end{equation}
\item
Normalization:~If\/ $P=\G\t B$ is trivial,\/ $\pi_!^\th(c_1^i\t 1_{H^*(B)})=c_{i+r+1}(\vartheta),$ where\/ $\vartheta\in K(B)$ is the ordinary K-theory class corresponding to\/ $\th.$
\item
Euler class:~$\pi^\th_!(1_{H^*(P)})=c_{r+1}(\th)$ is the characteristic class of Atiyah--Segal.
\item\label{thmB-d}
Base-linearity:~For all\/ $\al\in H^*(P;\Q)$ and\/ $\be\in H^*(B;\Q),$ we have
\e
\label{s1eqn2}
\pi^\th_!(\al\mathbin\cup\pi^*(\be))=\pi^\th_!(\al)\mathbin\cup\be.
\e
In particular, the pull-push formula\/ $\bigl(\pi_!^\th\circ\pi^*\bigr)(\be)=c_{r+1}(\th)\cup\be$ holds.
\item\label{thmB-g}
Push-pull formula:~For the underlying complex K-theory class\/ $\tilde\th\in K(P)$ of the orientation\/ $\th$ and using the operation\/ \textup{`$t\diamond$'} from Definition~\textup{\ref{DiamondAction}}, we have\/ $\bigl(\pi^*\circ \pi^\th_!\bigr)(\al)=\sum\nolimits_{i\geqslant0}\bigl(\frac{t^i}{i!}\diamond\al\bigr)\cup c_{i+r+1}(\tilde{\th})$ for all\/ $\al\in H^*(P;\Q).$
\item\label{thmB-e}
Duality:~For the dual principal\/ $\G$-bundle\/ $\breve{\pi}\colon\breve{P}\to B$ and the dual orientation\/ $\breve\th\in K_{\breve{P}}(B),$ we have\/ $\breve{\pi}^{\breve\th}_! = (-1)^{r+1}\pi^\th_!.$
\item\label{thmB-f}
Composition:~Let\/ $\pi_1\colon P_1\to B$ and\/ $\pi_2\colon P_2\to B$ be principal\/ $\G$-bundles and let\/ $P_3=P_1\ot_\G P_2.$ Let\/ $\th_k\in K_{P_k}(B)$ for\/ $1\leqslant k\leqslant3.$ Then
\e
\label{eqn:composition}
 \hspace{\leftmargin}
(\pi_2)^{\th_2}_!\circ (\ka_2)^{\ka_1^*\th_1+\ka_3^*\th_3}_! - (\pi_1)^{\th_1}_!\circ(\ka_1)^{\ka_2^*\th_2+\ka_3^*\th_3}_!
=(-1)^{r_1}(\pi_3)^{\th_3}_!\circ(\ka_3)^{\ka_1^*\breve\th_1+\ka_2^*\th_2}_!
\e
for the pushforwards around the three routes in the following commutative diagram of bundle projections of principal\/ $\G$-bundles.
\begin{equation}
\label{composition}
 \begin{tikzcd}[row sep=small]
  \pi_2^*(P_1)\cong P_1\t_B P_2\cong\pi_1^*(P_2)
  \ar[rr,"\ka_1"]
  \ar[dd,shift right=1.6cm,"\ka_2"]
  \ar[rd,start anchor={[xshift=-0.65cm]},end anchor={[xshift=-1cm,yshift=0.00cm]},shorten >=2ex,"\ka_3"] & &
  P_1\ar[dd,"\pi_1"]\\
  & \hspace{-2cm}P_3\ar[rd,start anchor={[xshift=-0.75cm,yshift=-0.1cm]},shorten <=0.5ex,"\pi_3"] &\\
  \hspace{-3.25cm}P_2
  \ar[rr,start anchor={[xshift=-1.25cm]},"\pi_2"] &[shift left=1cm] & B
 \end{tikzcd}
\end{equation}
Here,\/ $r_1$ is the rank of\/ $\th_1,$ we use the obvious homeomorphisms\/ $\pi_2^*(P_1)\cong P_1\t_B P_2\cong\pi_1^*(P_2)$ to identify all the domain cohomology groups in \eqref{eqn:composition}, and the orientation\/ $\ka_1^*\th_1+\ka_3^*\th_3$ on\/ $\pi_2^*(P_1)$ is constructed by pullback along the morphisms\/ $\ka_1\colon \pi_2^*(P_1)\to P_1$ and\/ $\ka_3\colon \pi_2^*(P_1)\to P_3$; similarly for the orientation\/ $\ka_2^*\th_2+\ka_3^*\th_3$ on\/ $\pi_1^*(P_2)$ and for\/ $\ka_1^*\breve\th_1+\ka_2^*\th_2$ on\/ $P_1\t_B P_2.$
\item\label{thmB-h}
For all\/ $\al\in H^*(P),$ $\pi^\th_!(\al)\cup\eta_P=0$ for the characteristic class\/ $\eta_P$ of\/ $P.$
\end{enumerate}
\end{thm}

The proof of Theorem~\ref{s1thm1} occupies \S\ref{sGenSection} and \S\ref{s43}. Dually, in Theorem~\ref{thmHomologEuler} we show that there is also a projective Euler operation $\pi_\th^!\colon H_*(B;\Q)\to H_{*-2r-2}(P;\Q)$ in homology, proving a conjecture by Joyce \cite[Question 2.41]{Joy}.

\begin{thm}
\label{Prop_S_Theta}
For every principal\/ $\G$-bundle\/ ${\pi\colon P\to B}$ and orientation\/ $\th\in K_P(B)$ in twisted K-theory, there is a \textbf{projective rank operation}
\[
 s_\th^*\colon H^*(P;\Q)\longra H^*(B;\Q),
\]
uniquely determined by the following properties.
\begin{enumerate}
\item
Naturality:~For every pullback diagram \eqref{pullback-diagram} and the pullback orientation\/ $\th_1=\Phi^*(\th_2),$ we have\/ $\phi^*\circ s_{\th_2}^*=s_{\th_1}^*\circ\Phi^*.$
\item
Stability:~$(s\t\id_{S^1})_{\th\t S^1}^*(\al\t[S^1])=s_\th^*(\al)\t[S^1]$ for all\/ $\al\in H^*(P;\Q)$
\item
Normalization:~If\/ $P$ is trivial,\/ $s^*_\th(c_1^i\t 1_{H^*(B)})=(-1)^i i!\z_i(\vartheta)$ for all\/ $i\geqslant0,$ where\/ $\vartheta\in K(B)$ is the ordinary K-theory class corresponding to\/ $\th.$
\item
Euler class:~$s_\th^*(1_{H^*(P)})=r\,1_{H^*(B)},$ where\/ $r$ is the rank of\/ $\th$
\item
Base-linearity:~For all\/ $\al\in H^*(P;\Q)$ and\/ $\be\in H^*(B;\Q),$ we have\/ $s_\th^*(\al\mathbin\cup\pi^*(\be))=s_\th^*(\al)\mathbin\cup \be.$ In particular,\/ $\bigl(s_\th^*\circ\pi^*\bigr)(\be)=r\be.$
\item
Push-pull formula:~For the underlying complex K-theory class\/ $\tilde\th\in K(P)$ of the orientation, we have\/ $\bigl(\pi^*\circ s_\th^*\bigr)(\al)=\sum\nolimits_{i\geqslant0}(-1)^i(t^i\diamond\al)\cup \z_{i}(\tilde{\th}).$
\end{enumerate}
\end{thm}

We prove Theorem~\ref{Prop_S_Theta} in \S\ref{sGenSection}.

\begin{rem}
Following Atiyah--Segal~\cite{AtSe04}, the projective unitary group $P\U(\cH)=\U(\cH)/\U(1)$ of a separable complex Hilbert space is a non-abelian topological group model for $\G.$ Let $L\to P\U(\cH)$ be the universal complex line bundle. If $P\to B$ is a principal $P\U(\cH)$-bundle, a twisted complex K-theory class is represented by an equivariant family of Fredholm operators $\th=\{\th_p\mid p\in P\}$ on $\cH$ satisfying
\ea
\label{th-equivariant}
\th_{pg}&=L|_g\ot \th_p,		&&\forall p\in P, g\in P\U(\cH).
\ea
In this model, the rank of $\th$ becomes the index. The one-dimensional complex vector spaces $\La(\th_p)=\La^\top(\Ker\th_p)\ot\La^\top(\Coker\th_p)^*$ are the fibers of the \emph{determinant line bundle} $\La(\th)\to P.$ From \eqref{th-equivariant} we find $\La(\th_{pg})\cong\La(L|_g\ot \th_p)\cong(L|_g)^{\ot\ind\th_p}\ot\La(\th_p).$
If ${\ind\th_p=0},$ it follows that $\La(\th)$ descends to a complex line bundle on $B$ whose first Chern class is the Atiyah--Segal characteristic class $c_1(\th).$ 
If ${\ind\th_p=1},$ the determinant line bundle is $P\U(\cH)$-equivariant and its classifying map ${f_{\La(\th)}\colon P\to P\U(\cH)}$ determines a global trivialization $(\pi,f_{\La(\th)})\colon P\to P\U(\cH)\t B$ of $P.$ The projective rank operation $s_\th^*$ is the pullback along the corresponding global section of $P,$ thus explaining the notation.
\end{rem}

Our next result states that $\pi_!^\th$ and $s_\th^*$ are the basic generators of all operations of this type. To explain this, we need more terminology. In \S\ref{s2} we will study in a general context a new algebro-topological tool, roughly the bundle version of traditional cohomology operations. Let $G$ be a topological group. For every principal $G$-bundle $\pi\colon P\to B$ with a certain type of orientation $\th,$ a \emph{pushforward operation} of type $(m,n)$ determines a map
\[
	\Xi_{P,\th}^{m|n}\colon H^m(P)\longra H^n(B),
\]
natural in $P$ and $\th.$ In general, these are hard to classify. This simplifies once we introduce \emph{stable} pushforward operations whose maps are defined for all $m$ with fixed degree $k=n-m$ and which satisfy the analogue of \eqref{PEstable}.

For $G=\G$ and $r\in\Z$ let $\Pi_{\smash{\F\t\{r\}}}^k$ be the group of stable pushforward operations $H^*(P;\Q)\to H^{*+k}(B;\Q)$ of degree $k$ for principal $\G$-bundles with orientations in twisted complex K-theory of rank $r.$ Given a pushforward operation, we can construct new pushforward operations by composing with endomorphisms of $H^*(P;\Q)$ and $H^*(B;\Q).$ Let $\ell_P\colon G\t P\to P$ denote the principal action and let $t\in H_2(\G)$ be the class dual to $c_1.$ As explained in Definition~\ref{DiamondAction} below, the ring ${\Q\llbracket t\rrbracket}$ acts on $H^*(P;\Q)$ by $t\diamond\al=t\backslash\ell_P^*(\al)$. Moreover, ${\Q[c_1,c_2,\ldots]}$ acts by multiplication with the Chern classes of the underlying complex K-theory class ${\tilde\th\in K(P)}.$ This determines on $\Pi_{\smash{\F\t\{r\}}}^*$ a graded module structure over the semi-direct product algebra ${S=\Q\llbracket t\rrbracket\rtimes\Q[c_1,c_2,\ldots]},$ where $t$ has degree $-2$ and $c_j$ has degree $2j.$

As we will see, in odd degrees there is a third basic operation, $\eta\in\Pi_{\smash{\F\t\{0\}}}^3,$ related to the characteristic class $\eta_P\in H^3(B;\Q)$ of the bundle.

\begin{thm}
\label{thmA}
\hangindent\leftmargini
\textup{(a)}\hskip\labelsep
If\/ $r=0,$ the even part\/ $\Pi_{\smash{\F\t\{0\}}}^\mathrm{ev}$ is generated as an\/ $S$-module by the projective Euler operation of degree\/ $2.$ The elements of the odd part\/ ${\Pi_{\smash{\F\t\{0\}}}^\mathrm{odd}\cong\Q}$ are only the rational multiples of\/ $\eta\in\Pi_{\smash{\F\t\{0\}}}^3.$ 
 \begin{enumerate}
\setcounter{enumi}{1}
 \item
 If\/ $r\neq0,$ then\/ $\eta=0$ \textup(rationally trivial case\textup) and the\/ $S$-module\/ $\Pi_{\smash{\F\t\{r\}}}^*$ is generated by the projective rank operation\/ $s^*_\th$ of degree\/ $0.$ In particular, we have\/ $\Pi_{\smash{\F\t\{r\}}}^\mathrm{odd}=\{0\}$ in this case.
 \end{enumerate}
\end{thm}

Theorem~\ref{thmA} is proven in \S\ref{s42}. In fact, we will actually use this classification result to construct the projective Euler and the projective rank operation. It would be interesting to determine all of the relations in the $S$-module $\Pi^*_{\F\t\{r\}}.$
\smallskip

Finally, we give the following application of the projective Euler class in \S\ref{s5}. Let $M$ be a commutative H-space with a free action of $\G$ that is compatible with the H-space operation $\Phi\colon M\t M\to M.$ It is well-known that the rational homology $H_*(M)$ is then a graded associative algebra. We will show that, given additional orientations, the homology $H_*(M/\G)$ can be made into a graded Lie algebra. Let $(\pi_0(M),+)$ be the commutative monoid of path-components and write $M_\alpha\subset M$ for the path-component representing $\alpha\in\pi_0(M).$ There are maps
\[
 (\Phi_{\al,\be}/\G)_*\colon H_*((M_\al\t M_\be)/\G;\Q) \longra H_*(M_{\al+\be}/\G;\Q).
\]
To relate the domain with $H_*(M_\al/\G\t M_\be/\G),$ note that there is a principal $\G$-bundle $\pi_{\al,\be}\colon(M_\al\t M_\be)/\G\to M_\al/\G\t M_\be/\G.$ If we assume orientations $\th_{\al,\be}$ on $(M_\al\t M_\be)/\G$ of ranks $\chi(\al,\be),$ we obtain from Theorem~\ref{thmHomologEuler} a projective Euler operation 
\[
\begin{tikzcd}[column sep=large]
	H_*(M_\al/\G \t M_\be/\G;\Q)\rar{(\pi_{\al,\be})_{\th_{\al,\be}}^!} &H_{*-2-2\chi(\al,\be)}((M_\al\t M_\be)/\G;\Q).
\end{tikzcd}
\]

\begin{thm}
\label{GradedLie}
Let\/ $(M,\Phi,\Psi)$ be a commutative H-space with\/ $\G$-action, orientations\/ $\th_{\al,\be},$ and signs\/ $\ep_{\al,\be}$ satisfying Assumption~\textup{\ref{ass:lie}}.
Define a grading on\/ $H_*(M/\G;\Q)=\bigoplus\nolimits_{\al\in\pi_0(M)} H_*(M_\al/\G;\Q)$ by\/ $|\ze|'=a+2-\chi(\al,\al)$ for\/ $\ze\in H_a(M_\al/\G).$ Then there is a graded Lie bracket on\/ $H_*(M/\G;\Q)$ defined by
\begin{equation}
\label{def:Lie}
[\ze,\eta]=\ep_{\al,\be}(-1)^{a\chi(\be,\be)}\bigl(\Phi_{\al,\be}/\G\bigr)_*(\pi_{\al,\be})_{\th_{\al,\be}}^!(\ze\t\eta)
\end{equation}
for\/ $\ze\in H_a(M_\al/\G)$ and\/ $\eta\in H_b(M_\be/\G).$
\end{thm}

We prove Theorem~\ref{GradedLie} in \S\ref{s5}. In the rationally trivial case, this result is due to Joyce~\cite[\S3.4]{Joy}, where the reader may also find applications of Theorem~\ref{GradedLie}. For instance, the input data for Theorem~\ref{GradedLie} arises naturally from an additive $\C$-linear dg-category $\cA$: the direct sum of objects defines the H-space operation on the topological realization of $\cA,$ the $\G$-action comes from scaling morphisms by phase, and the orientations are obtained from the Ext-complexes.
\smallskip

The paper ends with an Appendix~\ref{sAppdx} in which we prove two technical but elementary results that are used in the text.
\smallskip

\noindent
\emph{Acknowledgments.}~The author would like to thank D.~Joyce for many useful discussions. He would also like to thank an anonymous referee for numerous suggestions.

\section{Pushforward operations and stability}
\label{s2}

In this section, we study pushforward operations for general groups. These are defined in \S\ref{s21} and we also prove an elementary classification result there. In \S\ref{s22} we define the notion of stability and prove a better classification result for stable pushforward operations. The behaviour with respect to the multiplicative structure is studied in \S\ref{SecMultProp}. In \S\ref{sComputation} we prove a technical result that will be useful in \S\ref{s4}. Throughout, we always use singular cohomology with rational coefficients.

\subsection*{Conventions for group actions}

We always consider an action of a group $G$ on a space $X$ to be from the left and denote the action by $\ell_X\colon G\t X\to X.$ The quotient space is written as $X/G=\{Gx\mid x\in X\}.$ The Cartesian product of $G$-spaces is equipped with the diagonal $G$-action. If $A\subset X$ is a $G$-invariant subspace, then the quotient space $X/A$ has a natural $G$-action.

\subsection{Unstable pushforward operations}
\label{s21}

Let $G$ be a topological group with a left action $\ell_F\colon G\t F\to F$ on a topological space $F.$

\begin{dfn}
\label{s2dfn1}
\hangindent\leftmargini
\text{(a)}\hskip\labelsep
Let $P\to B$ be a principal $G$-bundle with principal action $\ell_P\colon G\t P\to P.$ The \emph{associated fiber bundle} $P\ot_G F$ is the quotient of $P\t F$ by the anti-diagonal $G$-action $\ell_{P\t F}(g,(p,f))=(\ell_P(g,p),\ell_F(g,f)).$ The orbit of $(p,f)$ is denoted by $p\ot_G f.$
\begin{enumerate}
\setcounter{enumi}{1}
\item
An \emph{$F$-orientation} on a principal $G$-bundle $P\to B$ is a homotopy class $\th$ of $G$-equivariant maps $f_\th\colon P\to F,$ meaning they satisfy $f_\th(\ell_P(g,p))=\ell_F(g,f_\th(p)).$ If $\Phi\colon P_1\to P_2$ is a morphism of principal $G$-bundles and $\th_2$ is an $F$-orientation on $P_2,$ there is a \emph{pullback orientation} $\th_1=\Phi^*(\th_2)$ on $P_1$ realized by $f_{\th_1}=f_{\th_2}\circ\Phi$.
\item
A \emph{pushforward operation $\smash{\Xi^{m|n}}$ of type $(m,n)$} assigns to each principal $G$-bundle $P\to B$ with $F$-orientation $\th$ a possibly non-linear map
 \[
  \smash{\Xi^{m|n}_{P,\th}}\colon H^m(P)\longra H^n(B).
 \]
 For every morphism $\Phi\colon P_1\to P_2$ and $F$-orientations satisfying $\th_1=\Phi^*(\th_2),$ we require a commutative naturality diagram
 \begin{equation}
 \label{s2eq2}
  \begin{tikzcd}
   H^m(P_2)\arrow[r,"\Phi^*"]\arrow[d,"\Xi^{m|n}_{P_2,\th_2}"] & H^m(P_1)\arrow[d,"\Xi^{m|n}_{P_1,\th_1}"]\\
   H^n(B_2)\arrow[r,"\phi^*"] & H^n(B_1)\mathrlap{.}
  \end{tikzcd}
 \end{equation}
\item
A pushforward operation $\smash{\Xi^{m|n}}$ is \emph{pointed} if $\smash{\Xi^{m|n}_{P,\th}}(0_P)=0_B$ for all $P\to B$ and $\th$; it is \emph{linear} if each $\smash{\Xi^{m|n}_{P,\th}}$ is a linear map.
\end{enumerate}
\end{dfn}

In this paper, the main example of such a setup will be twisted K-theory, where we have $G=B\U(1)$ and $F=B\U\t\Z$. Our presentation follows Atiyah--Segal~\cite{AtSe04} and Freed--Hopkins--Teleman~\cite{FHT}.

\begin{dfn}
\label{Dfn_TwistedK}
Let $V(r)\to B\U(r)$ be the universal complex vector bundle of rank $r$ and let $\F=\colim B\U(r)$ be the classifying space for stable complex vector bundles. The external tensor product $V(1)\boxtimes V(r)$ by the universal complex line bundle is classified by a map $\G\t B\U(r)\to B\U(r).$ Consider the standard embeddings $B\U(r)\subset\F\t\{r\}\subset\F\t\Z.$ In suitable models for $\G$ and $\F,$ this construction can be made into a topological group action $\ell_{\F\t\Z}$ of a topological abelian group $\G$ on $\F\t\Z$ that is compatible with taking direct sums in $\F\t\Z.$ From this point of view, twisted K-theory $K_P(B),$ where $P\to B$ is a principal $\G$-bundle, is the set of all homotopy classes of $\G$-equivariant maps $f_\th\colon P\to\F\t\Z.$ Hence, in this case $F$-orientations are just classes in twisted K-theory. The direct sum operation on $\F\t\Z$ makes $K_P(B)$ into an abelian group. By ignoring the equivariance of $\th,$ every twisted K-theory class determines an \emph{underlying complex K-theory class} $\tilde\th\in K(P)$ satisfying $\ell_P^*(\tilde\th)=V(1)\boxtimes\tilde\th,$ where $\ell_P\colon G\t P\to P$ denotes the principal action. Applying $\ell_P$ to \eqref{cj-Tensor}, we find
\e
\label{PullbackUnderlyingK}
 \ell_P^*(c_j(\tilde\th))=\sum_{k+\ell=j}\binom{r-\ell}{k}c_1^k\t c_\ell(\tilde\th),\qquad r=\operatorname{rank}(\tilde\th),
\e
which will be useful later. If $P$ is trivial, then we can identify $f_\th$ with a homotopy class of maps $B\to\F\t\Z$ and thus an ordinary K-theory class $\vartheta\in K(B).$
\end{dfn}

We next review background from \cite[Def.~2.1]{MLS1986} on stable homotopy theory necessary for the construction and classification of pushforward operations.

\begin{dfn}
\label{s2dfn2}
Let $G$ be a topological group. A \emph{$G$-spectrum} $\{F_m,\varphi_m\}$ is a sequence of pointed $G$-spaces $(F_m,*_{F_m})$ for all $m\geqslant 0$ and $G$-equivariant based \emph{connecting maps} $\varphi_m\colon \Si F_m\to F_{m+1}$ on the reduced suspensions (the smash product by $S^1$ on the right). If all the adjoint maps $\varphi_m^\dagger\colon F_m\to \Om F_{m+1}$ are homeomorphisms, we call $\{F_m,\varphi_m\}$ an \emph{$\Om$-$G$-spectrum}.
\end{dfn}

\begin{ex}
\label{s2ex1}
The \emph{Eilenberg--Mac Lane $\Om$-spectrum} $\{H_m,\eta_m\}$ is characterized by $\pi_k(H_m)=0$ for $k\neq m$ and $\pi_m(H_m)=\Q$ (more precisely, this spectrum is commonly called $H\Q$). Here, $G$ is trivial. As in \cite{Eil}, there are natural isomorphisms $[X,H_m]\cong H^m(X)$ to the cohomology groups under which the connecting maps correspond to the suspension isomorphism. Putting $X=H_m,$ we obtain the \emph{tautological class} $[\id_{H_m}]=\jmath_m\in H^m(H_m).$ Every $\al\in H^m(X)$ determines a unique homotopy class of maps $h_\al\colon X\to H_m$ such that $\al=h_\al^*(\jmath_m).$ For example, the connecting map $\eta_m\colon\Si H_m\to H_{m+1}$ corresponds to the suspension $\si(\jmath_m)\in H^{m+1}(\Si H_m).$
\end{ex}

\begin{ex}
\label{s2ex2}
The \emph{mapping spectrum} $\{\Map(G,H_m),\mu_m^\dagger\}$ has the connecting maps $\mu_m^\dagger=(\eta_m^\dagger)_*\colon\Map(G,H_m)\to\Map(G,\Om H_{m+1})\cong\Om\Map(G,H_{m+1})$ and is an $\Om$-$G$-spectrum. Write $\mu_m\colon \Si\Map(G,H_m)\to\Map(G,H_{m+1})$ for the adjoint. Our convention for the action of $g\in G$ on $\mu\in\Map(G,H_m)$ is
\[
 \ell_{\Map}\colon G\t \Map(G,H_m)\longra \Map(G,H_m),\qquad \ell_{\Map}(g,\mu)(x)=\mu(xg).
\]
The base-point $\ast_{\Map}$ is the constant map to the base-point $\ast_{H_m}.$
\end{ex}

Let $EG\to BG$ be the universal principal $G$-bundle with base-points $*_{EG}$ and $*_{BG}$ and let $\ell_{EG}$ be the principal action. The next definition constructs a universal example $(P_m^\cla,\th_m^\cla,\al_m^\cla)$ among all triples $(P,\th,\al)$ consisting of a principal $G$-bundle $P$, an $F$-orientation $\th$ on $P$, and a cohomology class $\al\in H^m(P)$. The universality of this example is proven in Proposition~\ref{prop_univ_example} below. We also introduce spaces $E_m$ which classify pointed pushforward operations by Proposition~\ref{s2prop2}(b) below.

\begin{dfn}
\label{dfn_univ_example}
Let $F_m^\cla=F\t\Map(G,H_m).$ The space $P_m^\cla=EG\t F\t\Map(G,H_m)$ with the diagonal $G$-action is the total space of a principal $G$-bundle with quotient $B_m^\cla=EG\ot_G(F\t \Map(G,H_m)).$ The projections $F_m^\cla\to F$, $P_m^\cla\to EG\t F$, and $B_m^\cla\to EG\ot_G F$ are fiber bundles with fiber $\Map(G,H_m)$. The base-point $*_{\Map}$ determines sections $F\to F_m^\cla$, $EG\t F\to P_m^\cla$, and $EG\ot_G F\to B_m^\cla$ of these fiber bundles and we denote their cofibers by $F_m$, $P_m$, and $E_m$. All this is summarized in the diagram \eqref{diag_Overview_Of_Spaces}, which also shows the embeddings $k$ and $k_m$ defined using the base-point $*_{EG}\in EG$. Observe that each of the spaces in the first two rows of \eqref{diag_Overview_Of_Spaces} has a natural $G$-action. Moreover, the spaces in the bottom row of \eqref{diag_Overview_Of_Spaces} are all fiber bundles over $BG.$

The bundle $P_m^\cla$ has a natural $F$-orientation $\th_m^\cla$ given by $f_{\th_m^\cla}\colon(e,f,\mu)\mapsto f$ and a cohomology class $\al_m^\cla\in H^m(P_m^\cla)$ represented by $h_{\al_m^\cla}(e,f,\mu)=\mu(1).$

\begin{equation}
\label{diag_Overview_Of_Spaces}
\begin{tikzcd}
F\arrow[r,hook,bend left=10,start anchor={east},end anchor={west},yshift=0.5ex,"i_m"]\arrow[d,"k"] & F_m^\cla=F\t\Map(G,H_m)\arrow[r,two heads,"q_m"]\arrow[d,"k_m"]\arrow[l,bend left=10,start anchor={west},end anchor={east},two heads,yshift=-0.5ex] & F_m=\dfrac{F_m^\cla}{i_m(F)}\arrow[d,"k_m"]\\
EG\t F\arrow[r,hook,bend left=10,start anchor={east},end anchor={west},yshift=0.5ex,"i_m"]\arrow[d] & P_m^\cla=EG\t F\t\Map(G,H_m)\arrow[r,two heads,"q_m"]\arrow[d]\arrow[l,bend left=10,start anchor={west},end anchor={east},two heads,yshift=-0.5ex] & P_m=\dfrac{P_m^\cla}{i_m(EG\t F)}\arrow[d]\\
EG\ot_G F\arrow[r,hook,bend left=10,start anchor={east},end anchor={west},yshift=0.5ex,"i_m"] & B_m^\cla=EG\ot_G(F\t\Map(G,H_m))\arrow[r,two heads,"q_m"]\arrow[l,bend left=10,start anchor={west},end anchor={east},two heads,yshift=-0.5ex] & E_m=\dfrac{B_m^\cla}{i_m(EG\ot_G F)}
\end{tikzcd}
\end{equation}
\end{dfn}


\begin{prop}
\label{prop_univ_example}
For every triple\/ $(P,\th,\al)$ of a principal\/ $G$-bundle\/ $P\to B$ with $F$-orientation\/ $\th$ and\/ $\al\in H^m(P)$ there is a morphism\/ $\Phi_{P,\th,\al}=(\Phi_P,f_\th,\mu_\al)\colon P\to P_m^\cla$ of principal\/ $G$-bundles, unique up to homotopy, such that\/ $\th=\Phi_{P,\th,\al}^*(\th_m^\cla)$ and\/ $\al=\Phi_{P,\th,\al}^*(\al_m^\cla).$ If\/ $\al=h_\al^*(\jmath_m)$ for\/ $h_\al\colon P\to H_m,$ then\/ $\mu_\al\colon P\to\Map(G,H_m)$, $p\mapsto h_\al(gp).$

In particular, given\/ $(P_k,\th_k,\al_k)$ for\/ $k=1,2$ and a morphism\/ $\Phi\colon P_1\to P_2$ of principal\/ $G$-bundles such that\/ $\th_1=\Phi^*(\th_2)$ and\/ $\al_1=\Phi^*(\al_2),$ the morphisms\/ $\Phi_{P_1,\th_1,\al_1}$ and\/ $\Phi_{P_2,\th_2,\al_2}\circ\Phi$ are homotopic.
\end{prop}

\begin{proof}
A morphism $\Phi_{P,\th,\al}\colon P\to P_m^\cla$ has three components $\Phi_{P,\th,\al}=(\Phi_P,f_\th,\mu_\al).$ The first component $\Phi_P$ is a classifying map for the principal $G$-bundle $P$, which always exists and whose homotopy class is uniquely determined by $P$. The homotopy class of the second component $f_\th\colon P\to F$ amounts to an $F$-orientation $\th$ on $P$, which can be written as $\th=\Phi_{P,\th,\al}^*(\th_m^\cla)$. The third component is a $G$-equivariant map $\mu_{\al}\colon P\to\Map(G,H_m),$ $p\mapsto \mu_{\al,p},$ which corresponds to a non-equivariant map $h_\al\colon P\to H_m,$ via the formula $\mu_{\al,p}(g)=h_\al(gp)$ for all $p\in P$, $g\in G.$ The homotopy class of the third component thus amounts to a cohomology class $\al\in H^m(P)$, which can be written as $\al=\Phi_{P,\th,\al}^*(\al_m^\cla)$.
\end{proof}

The morphism $\Phi_{P,\th,\al}\colon P\to P_m^\cla$ in Proposition~\ref{prop_univ_example} is called a \emph{classifying morphism} for $(P,\th,\al)$ and we write $\phi_{P,\th,\al}\colon B\to B_m^\cla$ for the quotient \emph{classifying map}.
\begin{equation}
 \begin{tikzcd}[column sep=huge]
 	P\arrow[d,"\pi"]\arrow[r,"\Phi_{P,\th,\al}"] & P_m^\cla\arrow[d,"\pi_m^\cla"]\\
 	B\arrow[r,"\phi_{P,\th,\al}"] & B_m^\cla
 \end{tikzcd}
\label{ClassifyingDiagram}
\end{equation}

\begin{prop}
\label{s2prop2}
\hangindent\leftmargini
\textup{(a)}\hskip\labelsep
Pushforward operations\/ $\Xi^{m|n}$ for principal\/ $G$-bundles with\/ $F$-orientation are in \textup{1-1} correspondence with classes\/ $\smash{\Xi^{m|n}_\cla}\in H^n(B_m^\cla).$ This correspondence is defined by
\ea
 \Xi_{P,\th}^{m|n}(\al)&=\phi_{P,\th,\al}^*(\Xi^{m|n}_\cla),\label{XiFromCla}\\
 \Xi^{m|n}_\cla&=\Xi^{m|n}_{P_m^\cla,\th_m^\cla}(\al_m^\cla).\label{ClaFromXi}
\ea
\begin{enumerate}
\setcounter{enumi}{1}
\item
A pushforward operation\/ $\smash{\Xi^{m|n}}$ is pointed if and only if\/ $i_m^*\bigl(\Xi^{m|n}_\cla\bigr)=0.$
\item
If\/ $P=G\t B$ is trivial, then an\/ $F$-orientation corresponds to a map\/ $f_\vartheta\colon B\to F$ by\/ $f_\vartheta(b)=f_\th(1,b)$ and a cohomology class\/ $\al\in H^m(P)$ corresponds to a map\/ $h_\al^\dagger\colon B\to\Map(G,H_m)$ by\/ $\al=h_\al^*(\jmath_m)$ for the adjoint\/ $h_\al\colon G\t B\to H_m$ of\/ $h_\al^\dagger.$
Then\/ $\phi_{P,\th,\al}=*_{EG}\ot_G(f_\vartheta,h_\al^\dagger)$ is a classifying map, which maps into the fiber\/ $F\t\Map(G,H_m)$ of\/ $B_m^\cla\to BG.$ If\/ $\Xi^{m|n}_\cla\big|_{F\t\Map(G,H_m)}=\xi\t x$ for\/ $\xi\in H^*(F)$ and $x\in H^*(\Map(G,H_m)),$ then
\e
\label{XmnTrivialFormula}
\Xi^{m|n}_{P,\th}(\al)=f_\vartheta^*(\xi)\cup\bigl(h_\al^\dagger\bigr)^*(x).
\e
\end{enumerate}
\end{prop}

\begin{proof}
(a)\hskip\labelsep
By the naturality of the classifying maps in Proposition~\ref{prop_univ_example}, the construction \eqref{XiFromCla} indeed defines a pushforward operation for each $\Xi^{m|n}_\cla\in H^n(B_m^\cla).$ By the uniqueness part of Proposition~\ref{prop_univ_example}, $\phi_{P_m^\cla,\th_m^\cla,\al_m^\cla}=\id_{B_m^\cla},$ from which it follows that the class of this operation is the original class $\Xi^{m|n}_\cla.$ Conversely, applying \eqref{XiFromCla} to the class from \eqref{ClaFromXi} gives
\[
 \phi^*_{P,\th,\al}(\Xi^{m|n}_\cla)=\phi^*_{P,\th,\al}(\Xi^{m|n}_{P_m^\cla,\th_m^\cla}(\al_m^\cla))\overset{\eqref{s2eq2}}{=}\Xi^{m|n}_{P,\th}(\Phi^*_{P,\th,\al}(\al_m^\cla))=\Xi^{m|n}_{P,\th}(\al).
\]

\noindent
(b)\hskip\labelsep
The space $Q^\cla=EG\t F$ is the total space of a principal $G$-bundle $Q^\cla\to EG\ot_G F$. The map $Q^\cla\to F,$ $(e,f)\mapsto f$ represents an $F$-orientation $\th^\cla$ on $Q^\cla$.

Since for $\al=0$ we can take the constant map $h_\al\colon EG\t F\to H_m$, the section $i_m\colon EG\ot_G F\to B_m^\cla$ from Definition~\ref{dfn_univ_example} is a classifying map $\phi_{Q^\cla,\th^\cla,0}$ for $(Q^\cla,\th^\cla,0).$ If $\Xi^{m|n}$ is pointed, then $i_m^*(\Xi^{m|n}_\cla)=\phi_{P,\th,0}^*(\Xi^{m|n}_\cla)\overset{\eqref{XiFromCla}}{=}\Xi^{m|n}_{P,\th}(0)=0.$

Conversely, suppose that $i_m^*(\Xi^{m|n}_\cla)=0$. Let $P\to B$ be a principal $G$-bundle with $F$-orientation $\th.$ Choose a classifying morphism $\Phi_P\colon P\to EG$ and a representative $f_\th\colon P\to F$ of the $F$-orientation $\th$. Together, these define a classifying morphism $\Phi_{P,\th}=(\Phi_P,f_\th)\colon P\to Q^\cla$ with $\th=\Phi_{P,\th}^*(\th^\cla)$. Let $\phi_{P,\th}\colon B\to EG\ot_G F$ be the quotient classifying map. Since $\Phi_{P,\th}^*(0)=0,$ the uniqueness part of Proposition~\ref{prop_univ_example} implies that the classifying map $\phi_{P,\th,0}\colon B\to B_m^\cla$ is homotopic to $i_m\circ \phi_{P,\th},$ hence $\Xi^{m|n}_{P,\th}(0)=\phi_{P,\th,0}^*(\Xi^{m|n}_\cla)=\phi_{P,\th}^*i_m^*(\Xi^{m|n}_\cla)=0.$
\smallskip

\noindent
(c)\hskip\labelsep
For the section $s\colon B\to G\t B,$ $b\mapsto (1,b)$ of $P$ we have $\pi\circ s=\id_B$ and hence the commutativity of \eqref{ClassifyingDiagram} implies $\phi_{P,\th,\al}=\pi_m^\cla\circ\Phi_{P,\th,\al}\circ s.$ The trivial bundle is classified by $\Phi_P\colon P\to EG,$ $(g,b)\mapsto\ell_{EG}(g,*_{EG}),$ so $\phi_{P,\th,\al}(b)=*_{EG}\ot_G (f_\th(1,b),\mu_\al(1,b)).$ Since $\mu_\al(1,b)\colon g\mapsto h_\al(g,b),$ we have $\mu_\al(1,b)=h_\al^\dagger,$ proving the claim about the classifying map. If we identify $*_{EG}\ot_G (F\t\Map(G,H_m))\cong F\t\Map(G,H_m)$ (as done in our assumption on $\Xi^{m|n}_\cla$), then $\phi_{P,\th,\al}$ becomes $(f_\vartheta,h_\al^\dagger)$ and hence $\Xi^{m|n}_{P,\th}(\al)=\phi_{P,\th,\al}^*(\Xi^{m|n}_\cla)=(f_\vartheta,h_\al^\dagger)^*(\xi\t x)=f_\vartheta^*(\xi)\cup(h_\al^\dagger)^*(x).$
\end{proof}

Consider the bottom row in \eqref{diag_Overview_Of_Spaces}. This is a cofiber sequence and the map $i_m$ has a retraction, so the long exact sequence of the pair $(B_m^\cla,EG\ot_G F)$ reduces to a short exact sequence
\begin{equation}
\label{KESij}
 \begin{tikzcd}[row sep=0]
 	0\rar & \widetilde H^n(E_m)\rar{q_m^*} & H^n(B_m^\cla)\rar{i_m^*} & H^n(EG\ot_G F)\rar & 0.
 \end{tikzcd}
\end{equation}
By Proposition~\ref{s2prop2}(b), the class of a pointed pushforward operation is in the image of $q_m^*$ and we can thus regard $\Xi^{m|n}_\cla$ as a class in the subspace $\widetilde H^n(E_m).$

\subsection{Stable pushforward operations}
\label{s22}

\begin{dfn}
\label{s22def0}
Let $\smash{\Xi^{m+1|n+1}}$ be a pointed pushforward operation for principal $G$-bundles with $F$-orientation of type $(m+1,n+1).$ The \emph{desuspension} is the pointed pushforward operation $\si(\Xi^{m+1|n+1})$ of type $(m,n)$ defined as follows. Let $P\to B$ be a principal $G$-bundle and equip the pullback $P\t S^1\to B\t S^1$ along the projection $\pi_B\colon B\t S^1\to B$ with the pullback $F$-orientation $\th\t S^1=\pi_P^*(\th),$ where $\pi_P\colon P\t S^1\to P$ is the projection. The long exact sequence of the pair $(P\t S^1, P\t\{1\})$ splits into short exact sequences
 \begin{equation}
 \label{s2eq7}
  \begin{tikzcd}[column sep=6ex]
   0\arrow[r]
   &
   H^m(P)\arrow[r,"\varsigma_P"]\arrow[dashed,d,"\si(\Xi^{m+1|n+1})_{P,\th}"]
   &
   H^{m+1}(P\t S^1)\rar\arrow[d,"\Xi^{m+1|n+1}_{P\t S^1,\th\t S^1}"]
   &
   H^{m+1}(P)\arrow[r]\arrow[d,"\Xi^{m+1|n+1}_{P,\th}"]
   &
   0
   \\
   0\arrow[r]
   &
   H^n(B)
   \arrow[r,"\varsigma_B"]
   &
   H^{n+1}(B\t S^1)\rar
   &
   H^{n+1}(B)\arrow[r]
   &
   0\mathrlap{,}
  \end{tikzcd}
 \end{equation}
where the maps\/ $\varsigma_P$, $\varsigma_B$ are given by cross product with the fundamental cohomology class in $H^1(S^1)$.
 The right-hand square commutes by naturality \eqref{s2eq2}. Define $\si(\Xi^{m+1|n+1})_{P,\th}$ as the restriction indicated in \eqref{s2eq7}, using exactness and $\Xi^{m+1|n+1}_{P,\th}(0)=0.$
\end{dfn} 

\begin{dfn}
A \emph{stable} pushforward operation $\smash{\Xi^{*|*+k}}$ of degree $k\in\Z$ for principal $G$-bundles with $F$-orientation is a sequence of pointed pushforward operations $\Xi^{m|m+k}$ of type $(m,m+k)$ for all $m\geqslant 0$ such that $\si(\Xi^{m+1|m+1+k})=\Xi^{m|m+k}.$ We sometimes omit the superscripts and write $\Xi_{P,\th}(\al)=\Xi_{P,\th}^{m|m+n}(\al)$ if $\al\in H^m(P).$
\end{dfn}

Our next goal is to classify stable pushforward operations.

\begin{dfn}
Recall that $B_m^\cla=EG\ot_G(F\t\Map(G,H_m))$ is a fiber bundle over $EG\ot_G F.$ The fiberwise suspension of this bundle is the space $\Si^f B_m^\cla=EG\ot_G(F\t\Si\Map(G,H_m)),$ which is again a fiber bundle over $EG\ot_G F$ with a natural base-point section. There are connecting maps
\begin{align*}
 \be_m\colon\Si^f B_m^\cla&\longra B_{m+1}^\cla,
&e\ot_G(f,\mu\wedge t)&\longmapsto e\ot_G(f,\eta_m(\mu(-)\wedge t)),
\end{align*}
where $e\in EG,$ $f\in F,$ $\mu\in\Map(G,H_m),$ $t\in S^1,$ and where $\eta_m$ is the connecting map of the Eilenberg--Mac Lane spectrum.

The maps $B_m^\cla\to\Om^f B_{m+1}^\cla$ adjoint to $\be_m$ into the fiberwise loop space are homeomorphisms.

\end{dfn}

\begin{rem}
The collection of fiber bundles $B_m^\cla\to EG\ot_G F$ together with the base-point sections $i_m$ and maps $\be_m\colon\Si^f B_m^\cla\to B_{m+1}^\cla$ form a \emph{parameterized spectrum} over $EG\ot_G F$. For a comprehensive introduction to (equivariant) parameterized spectra, see May--Sigurdsson \cite{MS}.
\end{rem}

\begin{dfn}
\label{Dfn_EmFmSpectra}
Recall the cofiber sequence $EG\ot_G F\xrightarrow{i_m} B_m^\cla\xrightarrow{q_m}E_m$ and the inclusion $k_m\colon F_m\to E_m$ from \eqref{diag_Overview_Of_Spaces}. There are unique connecting maps $\ep_m\colon\Si E_m\to E_{m+1}$ and $\varphi_m\colon \Si F_m\to F_{m+1}$ such that the following diagram commutes.
\begin{equation}
\begin{tikzcd}
\Si^f B_m^\cla\rar{\Si^f q_m}\dar{\be_m} & \Si E_m\rar{\Si k_m}\dar{\ep_m} & \Si F_m\dar{\varphi_m}\\
B_{m+1}^\cla\rar{q_{m+1}} & E_{m+1}\rar{k_{m+1}} & F_{m+1}
\end{tikzcd}
\label{diag:DefEpsilonVarphi}
\end{equation}
Each $F_m$ has a $G$-action and the connecting maps $\varphi_m$ are $G$-equivariant (we may thus view $\{F_m,\varphi_m\}$ as a na\"ive $G$-spectrum).

The long exact sequences in cohomology of the pairs $(E_m,F_m)$ and $(\Si E_m, \Si F_m)$ and the suspension isomorphisms yield a commutative diagram with exact rows:
\begin{equation}
\label{inverseSystem}
\begin{tikzcd}[column sep=3.5ex]
	\rar & H^{n+1}(E_{m+1},F_{m+1})\dar{(\ep_m,\varphi_m)^*}\rar & \widetilde H^{n+1}(E_{m+1})\dar{\ep_m^*}\rar & \widetilde H^{n+1}(F_{m+1})\dar{\varphi_m^*}\rar & \,\\
	\rar & \begin{array}[t]{r}H^{n+1}(\Si E_m,\Si F_m)\\\cong H^n(E_m,F_m)\end{array}\rar & \begin{array}[t]{r}\widetilde H^{n+1}(\Si E_m)\\\cong \widetilde H^n(E_m)\end{array}\rar & \begin{array}[t]{r}\widetilde H^{n+1}(\Si F_m)\\\cong \widetilde H^n(F_m).\end{array}\rar & \,
\end{tikzcd}
\end{equation}
The vertical maps determine inverse systems for each $m.$ Since taking the inverse limit preserves exactness over $\Q,$ we obtain for all $m, k$ an exact sequence:
 \begin{equation}
 \label{s2eq9}
  \begin{tikzcd}[column sep=small, nodes in empty cells]
   \,\arrow[r] & \lim H^{m+k}(E_m,F_m)\arrow[r] & \lim \widetilde H^{m+k}(E_m)\arrow[r] & \lim \widetilde H^{m+k}(F_m)\arrow[r] &\cdots.\!\!\!
  \end{tikzcd}
 \end{equation}
\end{dfn}

We can now state the classification result for stable pushforward operations.

\begin{thm}
\label{s2thm1}
The class of the desuspension\/ $\si(\Xi^{m+1|n+1})_\cla \in \widetilde{H}^n(E_m)$ of a pointed pushforward operation\/ $\Xi^{m+1|n+1}$ is the image of\/ $\Xi^{m+1|n+1}_\cla\in\widetilde{H}^{n+1}(E_{m+1})$ under the middle vertical map in \eqref{inverseSystem}.
In particular, stable pushforward operations of degree\/ $k$ for principal\/ $G$-bundles with\/ $F$-orientation correspond bijectively to elements\/ $\Xi_\cla^k=\bigl\{\Xi_\cla^{m|m+k}\bigr\}_{m\geqslant 0}$ in the inverse limit\/ $
\lim\widetilde H^{m+k}(E_m),$ viewed as a subset of the Cartesian product $\prod_m \widetilde H^{m+k}(E_m).$
\end{thm}

\begin{proof}
Using the notation of Definition~\ref{dfn_univ_example}, equip the pullback bundle $P_m^\cla\t S^1$ with the pullback $F$-orientation $\th_m^\cla\t S^1$ and the cohomology class $\varsigma_{P_m^\cla}(\al_m^\cla)\in H^{m+1}(P_m^\cla\t S^1).$
Let $\Ga_m=\Phi_{P_m^\cla\t S^1,\th_m^\cla\t S^1,\varsigma_{P_m^\cla}(\al_m^\cla)}$ be the classifying morphism from Proposition~\ref{prop_univ_example}, which satisfies $\Ga_m^*(\al_{m+1}^\cla)=\varsigma_{P_m^\cla}(\al_m^\cla)$ and $\Ga_m^*(\th_{m+1}^\cla)=\th_m\t S^1.$ Let $\ga_m$ be the quotient map of $\Ga_m.$ We then compute
 \begin{align*}
 \ga_m^*\bigl(\Xi^{m+1|n+1}_\cla\bigr)&=\ga_m^*\bigl(\Xi^{m+1|n+1}_{P_{m+1}^\cla,\th_{m+1}^\cla}(\al^\cla_{m+1})\bigr) && \text{by \eqref{ClaFromXi}}\\
  &=\Xi^{m+1|n+1}_{P_m^\cla\t S^1,\th_m^\cla\t S^1}\bigl(\Ga_m^*(\al^\cla_{m+1})\bigr) && \text{by nat.~\eqref{s2eq2}}\\
  &=\Xi^{m+1|n+1}_{P_m^\cla\t S^1,\th_m^\cla\t S^1}(\varsigma_{P_m^\cla}(\al_m^\cla))\\
  &=\varsigma_{B_m^\cla}\bigl(\si(\Xi^{m+1|n+1})_{P_m^\cla,\th_m^\cla}(\al_m^\cla)\bigr) &&\text{by \eqref{s2eq7}.}
 \end{align*}
Hence $\ga_m^*(\Xi^{m+1|n+1}_\cla)=\varsigma_{B_m^\cla}(\si(\Xi^{m+1|n+1})_\cla)$ holds in $H^{n+1}(B_m^\cla\t S^1).$ This equation appears in the top row of the commutative diagram
\[
\begin{tikzcd}
	H^{n+1}(B_{m+1}^\cla)\rar{\ga_m^*} & H^{n+1}(B_m^\cla\t S^1) & H^n(B_m^\cla)\arrow[l,hook',"\varsigma_{B_m^\cla}"']\\
	\widetilde H^{n+1}(E_{m+1})\rar{\ep_m^*}\arrow[u,hook',"q_{m+1}^*"] & \widetilde H^{n+1}(\Si E_m)\arrow[u,"(q_m\t\id_{S^1})^*"] & \widetilde H^n(E_m)\arrow[u,hook',"q_m^*"]\mathrlap{,}\arrow[l,"\cong"']
\end{tikzcd}
\]
whose outer vertical maps are injective, by \eqref{KESij}. Since the suspension at the bottom is bijective, the middle vertical map is injective as well. We conclude that $\ep_m^*(\Xi^{m+1|n+1}_\cla)$ is the suspension of $\Xi^{m|n}_\cla$ in the group $\widetilde H^{n+1}(\Si E_m),$ as required.
\end{proof}

\begin{thm}
Every stable pushforward operation is automatically\/ $\Q$-linear.
\end{thm}

\begin{proof}
Every group morphism between $\Q$-vector spaces is automatically $\Q$-linear, so it suffices to prove that stable pushforward operations $\Xi^{m|m+k}_{P,\th}\colon H^m(P)\to H^{m+k}(B)$ are additive. By \eqref{KESij}, a pointed pushforward operation $\Xi^{m|m+k}_{P,\th}$ is represented by a class $\Xi^{m|m+k}_\cla$ in the subspace $\widetilde H^{m+k}(E_m)\subset H^n(B_m^\cla)$, so there is a map $h_m\colon B_m^\cla\to H_{m+k}$ with $h_m\circ i_m=*_{H_{m+k}}$ such that $\Xi^{m|m+k}_\cla=h_m^*(\jmath_{m+k})$, where $\jmath_{m+k}$ is the tautological class from Example~\ref{s2ex1}. Consider the diagram
\begin{equation}
\label{s2eq10}
 \begin{tikzcd}[column sep=huge]
 	\Si E_m\arrow[dr,"\Si^f h_m\circ\Si^f q_m"' xshift=2ex]\arrow[rrr,"\ep_m",bend left=10] & \Si^f B_m^\cla\arrow[d,"\Si^f h_m"]\arrow[r,"\be_m"]\arrow[l,"\Si^f q_m"] & B_{m+1}^\cla\arrow[r,"q_{m+1}"']\arrow[d,"h_{m+1}"'] & E_{m+1}\mathrlap{.}\arrow[dl,"h_{m+1}\circ q_{m+1}"]\\
 	& \Si H_{m+k}\rar{\eta_{m+k}} & H_{m+1+k}
 \end{tikzcd}
\end{equation}
The top quadrilateral commutes by \eqref{diag:DefEpsilonVarphi} and the outer triangles obviously commute. By Theorem~\ref{s2thm1} the class $\Xi^{m|m+k}_\cla$ is the image of $\Xi^{m+1|m+1+k}_\cla$ under the middle vertical map in \eqref{inverseSystem}, which means that the outer square commutes up to homotopy. It follows that the bottom square in \eqref{s2eq10} commutes up to homotopy.

Let $\star$ be the loop composition and $\star^f$ the fiberwise loop composition on the fiberwise loop space $\Om^f.$ Let $P\to B$ be a principal $G$-bundle and let $\al, \be\in H^m(P).$ Then the representative map $h_{\al+\be}$ for $\al+\be$ can be taken to be the composite
\[
\begin{tikzcd}[column sep=7ex]
P\rar{(h_\al,h_\be)} & H_m\t H_m\rar{\eta_m^\dagger\t\eta_m^\dagger} & \Om H_{m+1}\t\Om H_{m+1}\rar{\star} & \Om H_{m+1}\rar{(\eta_m^\dagger)^{-1}} & H_m.
\end{tikzcd}
\]
Hence $\phi_{P,\th,\al+\be}$ is the composition of $(\phi_{P,\th,\al},\phi_{P,\th,\be})\colon B\to B_m^\cla\t_C B_m^\cla,$ where $C=EG\ot_G F,$ with the upper row of the homotopy-commutative diagram
\[
\begin{tikzcd}[row sep=large, column sep=7ex]
B_m^\cla\t_C B_m^\cla\rar{\be_m^\dagger\t_C\be_m^\dagger}\dar[swap]{(h_m)^{\t2}}\arrow[rd,phantom,pos=0.1,"\text{\small\eqref{s2eq10}}"] & \Om^f B_{m+1}^\cla\t_C\Om^f B_{m+1}^\cla\rar{\star^f}\dar[swap]{\Om^f(h_{m+1})^{\t2}}\arrow[rd,phantom,pos=0.1,"\text{\small(obvious)}"] & \Om^f B_{m+1}^\cla\rar{(\be_m^\dagger)^{-1}}\dar[swap]{\Om^f(h_{m+1})}\arrow[rd,phantom,pos=0.2,"\text{\small\eqref{s2eq10}}"] & B_m^\cla\dar[swap]{h_m}\\
H_{m+k}\t H_{m+k}\rar[outer sep=5pt]{\eta_{m+k}^\dagger\t\eta_{m+k}^\dagger} & \Om H_{m+1+k}\t\Om H_{m+1+k}\rar{\star} & \Om H_{m+1+k}\rar{(\eta_{m+k}^\dagger)^{-1}} & H_{m+k}\mathrlap{.}
\end{tikzcd}
\]
Here, the outer two squares homotopy-commute by the adjoint of the inner square in \eqref{s2eq10}. The lower row represents the addition $a\colon H_{m+k}\t H_{m+k}\to H_{m+k},$ so $a^*(\jmath_{m+k})=\pi_1^*(\jmath_{m+k})+\pi_2^*(\jmath_{m+k})$ for the two projections $\pi_1$ and $\pi_2.$ Compute
\begin{align*}
&\Xi^{m|m+k}_{P,\th}(\al+\be)=\phi_{P,\th,\al+\be}^*(\Xi^{m|m+k}_\cla)=\phi_{P,\th,\al+\be}^*h_m^*(\jmath_{m+k})\\
&=(\phi_{P,\th,\al},\phi_{P,\th,\be})^*(h_m\t h_m)^*a^*(\jmath_{m+k})
=\phi_{P,\th,\al}^*h_m^*(\jmath_{m+k})+\phi_{P,\th,\be}^*h_m^*(\jmath_{m+k})\\
&=\phi_{P,\th,\al}^*(\Xi^{m|m+k}_\cla)+\phi_{P,\th,\be}^*(\Xi^{m|m+k}_\cla)
=\Xi^{m|m+k}_{P,\th}(\al)+\Xi^{m|m+k}_{P,\th}(\be).\qedhere
\end{align*}
\end{proof}

\subsection{Multiplicative properties}
\label{SecMultProp}

Recall that the Eilenberg-Mac Lane ring spectrum is equipped with product maps $\mathrm{M}_{m_1,m_2}\colon H_{m_1}\t H_{m_2}\to H_{m_1+m_2}$ characterized by $\mathrm{M}_{m_1,m_2}^*(\jmath_{m_1+m_2})=\jmath_{m_1}\t\jmath_{m_2}$ for the tautological classes from Example~\ref{s2ex1}. Define $G$-equivariant maps $\ol\rho_{m_1,m_2}\colon \Map(G,H_{m_1})\t H_{m_2}\to\Map(G,H_{m_1+m_2})$ by $\ol\rho_{m_1,m_2}(\mu,h)\colon g\mapsto \mathrm{M}_{m_1,m_2}(\mu(g),h)$. In the following diagram, observe that the upper horizontal map is $G$-equivariant and therefore descends to a map $\rho_{m_1,m_2}$.
\begin{equation}
\label{rhoPullback}
 \begin{tikzcd}[column sep=20ex]
 	P_{m_1}^\cla\t H_{m_2}\rar{\id_{EG}\t\id_F\t\ol\rho_{m_1,m_2}}\dar & P_{m_1+m_2}^\cla\dar\\
 	B_{m_1}^\cla\t H_{m_2}\rar{\rho_{m_1,m_2}} & B_{m_1+m_2}^\cla
 \end{tikzcd}
\end{equation}
The $F$-orientation $\th_{m_1+m_2}^\cla$ and $\al_{m_1+m_2}^\cla$ on $P_{m_1+m_2}^\cla$ pull back to an $F$-orientation $\tilde\th_{m_1,m_2}^\cla$ and the class $\tilde\al_{m_1,m_2}^\cla\allowbreak=(\id_{EG}\t\allowbreak\id_F\t\allowbreak\ol\rho_{m_1,m_2})^*\allowbreak(\al_{m_1+m_2}^\cla)=\al_{m_1}^\cla\t\jmath_{m_2}$.

\begin{prop}
\label{s2prop5}
For a stable pushforward operation\/ $\Xi^{*|*+k}$ with class\/ $\Xi^k_\cla=\bigl\{\Xi^{m|m+k}_\cla\bigr\},$ where\/ $\Xi^{m|m+k}_\cla\in H^{m+k}(B_m^\cla),$ the following are equivalent.

\begin{enumerate}
\item
For every principal\/ $G$-bundle\/ $P\to B$ with\/ $F$-orientation\/ $\th$, $\al\in H^{m_1}(P)$, topological space\/ $B'$, $\be\in H^{m_2}(B')$, and projection\/ $\pi_P\colon P\t B'\to P$, we have\/ $\Xi^{m_1+m_2|m_1+m_2+k}_{P\t B',\pi_P^*(\th)}(\al\t\be)=\Xi^{m_1|m_1+k}_{P,\th}(\al)\t\be$.

Equivalently,\/ $\Xi^{m_1+m_2|m_1+m_2+k}_{P,\th}(\al\cup\pi^*(\be))=\Xi^{m_1|m_1+k}_{P,\th}(\al)\cup\be$ for every principal\/ $G$-bundle\/ $P$, $\th$, $\al\in H^{m_1}(P)$, and\/ $\be\in H^{m_2}(B).$
\item
$\rho_{m_1,m_2}^*\bigl(\Xi^{m_1+m_2|m_1+m_2+k}_\cla\bigr)=\Xi^{m_1|m_1+k}_\cla\t\jmath_{m_2}.$
\end{enumerate}


\end{prop}

\begin{proof}
Observe that the naturality \eqref{s2eq2} implies that (a) can be stated equivalently for internal or for external products.
\smallskip

\noindent
$\text{(b)}\Rightarrow\text{(a)}.$
Choose a map $h_\be\colon B'\to H_{m_2}$ with $h_\be^*(\jmath_{m_2})=\be$. If $\Phi_{P,\th,\al}$ is a classifying morphism as in Proposition~\ref{prop_univ_example}, then $\Phi_{P,\th,\al}\t h_\be\colon P\t B'\to P_{m_1}^\cla\t H_{m_2}$ composed with the upper horizontal map in \eqref{rhoPullback} is a classifying morphism $\Phi_{P\t B',\pi_P^*(\th),\al\t\be}$ since $(\Phi_{P,\th,\al}\t h_\be)^*(\tilde\al_{m_1,m_2}^\cla)=\Phi_{P,\th,\al}^*(\al_{m_1}^\cla)\t h_\be^*(\jmath_{m_2})=\al\t\be$. Taking quotients, this implies $\rho_{m_1,m_2}\circ(\phi_{P,\th,\al}\t h_\be)=\phi_{P\t B',\pi_P^*(\th),\al\t\be}$ for the classifying maps. The pullback of equation (b) along $\phi_{P,\th,\al}\t h_\be\colon B\t B'\to B_{m_1}^\cla\t H_{m_2}$ is therefore $(\phi_{P\t B',\pi_P^*(\th),\al\t\be})^*\allowbreak(\Xi^{m_1+m_2|m_1+m_2+k}_\cla)=(\phi_{P,\th,\al}\t h_\be)^*\allowbreak(\Xi^{m_1|m_1+k}_\cla\t\jmath_{m_2})=\phi_{P,\th,\al}^*(\Xi^{m_1|m_1+k}_\cla)\t\be$ and then \eqref{XiFromCla} completes the proof of (a).
\smallskip

\noindent
$\text{(a)}\Rightarrow\text{(b)}.$
By naturality \eqref{s2eq2} applied to \eqref{rhoPullback} we get the equation
\[
 \Xi^{m_1+m_2|m_1+m_2+k}_{P_{m_1}^\cla\t H_{m_2},\tilde\th_{m_1,m_2}^\cla}(\al_{m_1}^\cla\t\jmath_{m_2})=\rho_{m_1,m_2}^*\bigl(\Xi_{P_{m_1+m_2}^\cla,\th_{m_1+m_2}^\cla}^{m_1+m_2|m_1+m_2+k}(\al_{m_1+m_2}^\cla)\bigr).
\]
As $\tilde\th_{m_1,m_2}^\cla=\pi_{P_{m_1}^\cla}^*(\th_{m_1}^\cla)$ for the projection $\pi_{P_{m_1}}\colon P_{m_1}\t H_{m_2}\to P_{m_1}$, we can use (a) to rewrite the left hand side of the equation as $\Xi^{m_1|m_1+k}_{P_{m_1}^\cla,\th_{m_1}^\cla}(\al_{m_1}^\cla)\t\jmath_{m_2}$, while by \eqref{ClaFromXi} the right hand side of the equation is $\rho_{m_1,m_2}^*(\Xi_\cla^{m_1+m_2|m_1+m_2+k})$.
\end{proof}

\begin{dfn}
\label{dfn_EquivCharClass}
Let $P\to B$ be a principal $G$-bundle with $F$-orientation $\th.$ Choose a representative map $f_\th\colon P\to F$ for $\th$ and write $S_\th\colon B\to P\ot_G F$ for the associated section, defined by $S_\th(b)=p\ot_G f$ if $f_\th(p)=f$ and $p\in P|_b.$ Let $\Phi_P\colon P\to EG$ be a classifying morphism for $P.$ For $\xi_0^G\in H^n(EG\ot_G F),$ define the \emph{characteristic class}
\e
\label{eqn_EquivCharClass}
 \xi_0^G(\th)=S_\th^*(\Phi_P\ot_G\id_F)^*(\xi_0^G)\in H^n(B).
\e
\end{dfn}

Let $\pi_F$ and $\pi_{\Map}$ be the projections of $B_0^\cla=EG\ot_G(F\t \Map(G,H_0))$ onto $EG\ot_G F$ and $EG\ot_G\Map(G,H_0).$ Let $j\colon \Map(G,H_0)\to EG\ot_G\Map(G,H_0)$ be the inclusion $j(\mu)=*_{EG}\ot_G\mu$ and $\ev\colon G\t\Map(G,H_m)\to H_m$ be the evaluation map. If $G$ is connected, then $H_0(G)\cong\Q$ has a canonical basis $t^0\in H_0(G)$ characterized by $\an{t^0,1_{H^*(G)}}=1$ using the Kronecker pairing.

For a topological space $X$, write $1_{H^*(X)}$ for the unit in the ring $H^*(X)$. Write $H_m(X)\t H^{m+n}(X\t Y)\to H^n(Y),$ $(t,\al)\mapsto t\backslash\al$ for the slant product.

\begin{prop}
\label{PropImageof1}
Let\/ $G$ be a connected topological group.
Suppose that\/ $\Xi^{0|n}_\cla=\pi_F^*(\xi_0^G)\cup\pi_{\Map}^*(x)$ for\/ $\xi_0^G\in H^n(EG\ot_G F)$ and for\/ $x\in H^0(EG\ot_G\Map(G,H_0))$ such that\/ $j^*(x)=t^0\backslash\ev^*(\jmath_0).$ Then\/ $\Xi^{0|n}_{P,\th}(1_{H^*(P)})=\xi_0^G(\th)$ for every principal\/ $G$-bundle\/ $\pi\colon P\to B$ with\/ $F$-orientation\/ $\th.$
\end{prop}

\begin{proof}
Let $\ka\colon H_0\to\Map(G,H_0)$ be the inclusion of $H_0$ as the constant maps. Let $h_{1_{H^*(B)}}\colon B\to H_0$ and $h_{1_{H^*(P)}}\colon P\to H_0$ be the constant maps to the base-point of $H_0,$ which represent the cohomology classes $1_{H^*(B)}$ and $1_{H^*(P)}.$ If $\Phi_P\colon P\to EG$ is a classifying map for $P,$ then $\Phi_{P,\th,1_{H^*(P)}}=(\Phi_P,f_\th,\ka\circ h_{1_{H^*(P)}})$ is a classifying morphism for $(P,\th,1_{H^*(P)})$ in the sense of Proposition~\ref{prop_univ_example}, hence $\pi_{\Map}\circ\phi_{P,\th,1_{H^*(P)}}=j\circ\ka\circ h_{1_{H^*(B)}}$ and $\pi_F\circ\phi_{P,\th,1_{H^*(P)}}=(\Phi_P\ot_G\id_F)\circ S_\th.$ Let $\pi_{H_0}\colon G\t H_0\to H_0$ be the projection. Then
\begin{align*}
	&\Xi_{P,\th}^{0|n}(1_{H^*(P)})=\phi_{P,\th,1_{H^*(P)}}^*(\Xi^{0|n}_\cla)\mathrlap{=\phi_{P,\th,1_{H^*(P)}}^*\bigl(\pi_F^*(\xi_0^G)\cup\pi_{\Map}^*(x)\bigr)}\\
	&=S_\th^*(\Phi_P\ot_G\id_F)^*(\xi_0^G)\cup h_{1_{H^*(B)}}^*\ka^*j^*(x)\\
	&=\xi_0^G(\th)\cup h_{1_{H^*(B)}}^*\ka^*(t^0\backslash\ev^*(\jmath_0)) && \text{by \eqref{eqn_EquivCharClass}}\\
	&=\xi_0^G(\th)\cup h_{1_{H^*(B)}}^*\bigl(t^0\backslash(\id_G\t\ka)^*\ev^*(\jmath_0)\bigr) && \text{nat.~of `$\backslash$'}\\
	&=\xi_0^G(\th)\cup h_{1_{H^*(B)}}^*\bigl(t^0\backslash(1_{H^*(G)}\t\jmath_0)\bigr) && \text{by $\ev\circ(\id_G\t\ka)=\pi_{H_0}$}\\
	&=\xi_0^G(\th)\cup h_{1_{H^*(B)}}^*(\jmath_0)=\xi_0^G(\th)\cup 1_{H^*(B)}=\xi_0^G(\th)&& \text{by $\an{t^0,1_{H^*(G)}}=1.$} \qedhere
\end{align*}
\end{proof}

\subsection{A technical result}
\label{sComputation}

By Theorem~\ref{s2thm1}, stable pushforward operations of degree $k$ are in bijection with classes in $\lim \widetilde H^{m+k}(E_m).$ To compute these groups, the inverse limit groups of the fiber spectra $\Ga^k_F=\lim\widetilde H^{m+k}(F_m)$ will play an important role, where the inverse system was defined in \eqref{inverseSystem}. The direct sum over these groups will be written $\Ga^*_F=\bigoplus_{k\in\Z}\lim\widetilde H^{m+k}(F_m)$ and forms a graded $H^*(F)$-module. The following theorem computes this module.

Since $F_m=(F\t\Map(G,H_m))/(F\t\{*_{\Map}\}),$ this inverse system is closely connected to the inverse system $\widetilde H^{m+k}(\Map(G,H_m))$ with the connecting maps $\mu_m^*\colon\widetilde{H}^{m+1+k}(\Map(G,H_{m+1}))\allowbreak\to\widetilde H^{m+1+k}(\Si\Map(G,H_m))\allowbreak\!\cong\!\widetilde{H}^{m+k}(\Map(G,H_m))$ constructed using $\mu_m$ from Example~\textup{\ref{s2ex2}} and the suspension isomorphism.

Assume that $H_*(G)$ is finite-dimensional in each degree and fix a basis $\{b_i\}_{i\in I}$ of homogeneous elements. Using the tautological classes, define \emph{slant product classes}
\e
\label{DefXI}
 x_i^{(m)}=b_i\backslash\ev^*(\jmath_m)\in \widetilde H^{m-|b_i|}\bigl(\Map(G,H_m)\bigr),
\e
observing that $x_i^{(m)}=0$ if $|b_i|>m.$

\begin{thm}
\label{s2thm2}
\textup{(a)}\hskip\labelsep
\hangindent\leftmargini
For each\/ $m,$ $\widetilde{H}^*(\Map(G,H_m))$ is a free graded-commutative $\Q$-algebra on generators\/ $x_i^{(m)}$ for all\/ $i\in I$ with\/ $|b_i|\leqslant m.$
\begin{enumerate}
\stepcounter{enumi}
\item 
The images of the generators under the connecting maps are
\e
\label{mumdagger}
 \mu_m^*\bigl(x_i^{(m+1)}\bigr)=
 \begin{cases}
 x_i^{(m)}&\text{ if\/ $|b_i|\leqslant m,$}\\
 0&\text{ if\/ $|b_i|=m+1,$}
 \end{cases}
\e
 while\/ $\mu_m^*$ annihilates all monomials of degree\/ $\geqslant2$ in the\/ $x_i^{(m+1)}.$
 \item
The inverse system\/ $(\widetilde{H}^{m+k}(F_m),\varphi_m^*)$ is isomorphic to the inverse system $\Bigl(\bigoplus_{p+q=m+k}H^p(F)\ot\widetilde{H}^q(\Map(G,H_m)),\id_{H^*(F)}\ot\mu_m^*\Bigr).$
\item
The component\/ $\Ga^k_F=\lim\widetilde{H}^{m+k}(F_m)$ of the\/ $H^*(F)$-module\/ $\Ga^*_F$ consists of all formal infinite series\/ $\sum_{i\in I} a_i x_i$ with\/ $a_i\in H^{k+|b_i|}(F)$; the projection to\/ $\widetilde{H}^{m+k}(F_m)\cong \bigoplus_{p+q=m+k}H^p(F)\ot H^q(\Map(G,H_m))$ is\/ $\sum_{i\in I} a_i \ot x_i^{(m)},$ where the sum extends over all\/ $i\in I$ with\/ $|b_i|\leqslant m.$
\end{enumerate}
\end{thm}

\begin{proof}
(a)\hskip\labelsep
Represent each $x_i^{(m)}$ by a pointed map $\phi_i^{(m)}\colon \Map(G,H_m)\to H_{m-|b_i|}.$ Taken together, these determine a map
\e
\label{MappingWE}
  \phi^{(m)}\colon \Map(G,H_m)\longra\prod\nolimits_{i\in I: |b_i|\leqslant m} H_{m-|b_i|}.
\e
We will show that \eqref{MappingWE} is a weak equivalence by showing that the induced map $\pi_k(\phi^{(m)})$ of homotopy groups is an isomorphism for all $k.$ We can identify the domain of $\pi_k(\phi^{(m)})$ with $\pi_k(\Map(G,H_m))\cong[G^+\wedge S^k,H_m]\cong\widetilde{H}^m(G^+\wedge S^k),$ the codomain with $\prod\nolimits_{i\in I: |b_i|\leqslant m}\pi_k(H_{m-|b_i|})\cong\prod\nolimits_{i\in I: |b_i|=m-k}\widetilde{H}^{m-|b_i|}(S^k),$ and $\pi_k(\phi^{(m)})$ corresponds to the map that sends $\al\in\widetilde{H}^m(G^+\wedge S^k)$ to $(b_i\backslash\al)_{i\in I: |b_i|=m-k}.$ Since $\{b_i\}_{i\in I}$ is a homogenous basis of $H_*(G),$ the Universal Coefficient Theorem implies that $H^{m-k}(G)\to\prod\nolimits_{i\in I: |b_i|=m-k}\Q,$ $\be\mapsto(\an{\be,b_i})_{i\in I, |b_i|=m-k}$ is an isomorphism, so the codomain of $\pi_k(\phi^{(m)})$ can be further identified with $H^{m-k}(G)\ot\widetilde{H}^k(S^k)$ and then $\pi_k(\phi^{(m)})$ becomes the K\"unneth map $\widetilde{H}^m(G^+\wedge S^k)\to H^{m-k}(G)\ot\widetilde{H}^k(S^k).$ Since we are taking coefficients in $\Q$ and since $H_*(G)$ is finite-dimensional in each degree, the cohomological K\"unneth Theorem implies that the K\"unneth map is an isomorphism, hence $\pi_k(\phi^{(m)})$ is an isomorphism for all $k,$ as claimed.

The fact that $\widetilde H^*(H_{m-|b_i|})$ is the free graded-commutative $\Q$-algebra on $\jmath_{m-|b_i|}$ and the isomorphism in cohomology induced by \eqref{MappingWE} imply that $\widetilde{H}^*(\Map(G,H_m))$ is the free graded-commutative $\Q$-algebra on $\bigl(\phi^{(m)}_i\bigr)^*(\jmath_{m-|b_i|})=x^{(m)}_i.$
\smallskip

\noindent
(b)
In the notation of Example~\textup{\ref{s2ex2}}, we have $\eta_m\circ(\Si\ev)=\ev\circ(\id_G\t\mu_m)$ and $\eta_m^*(\jmath_{m+1})=\si(\jmath_m).$ Using the naturality of slant products and their compatibility with the suspension isomorphisms, we compute
\[
 \mu_m^*(x_i^{(m+1)})=\mu_m^*(b_i\backslash\ev^*(\jmath_{m+1}))=b_i\backslash(\Si \ev)^*(\si(\jmath_m))=\si(b_i\backslash\ev^*(\jmath_m)).
\]
This right hand side is $\si(x_i^{(m)})$ if $|b_i|\leqslant m$ and vanishes if $|b_i|=m+1$ since $\widetilde H^{m-|b_i|}(\Map(G,H_m))=0$ in this case. This proves \eqref{mumdagger}. Finally, $\mu_m^*$ vanishes on monomials of degree $\geqslant2$ because cup products vanish on suspensions.
\smallskip

\noindent
(c) This follows from the naturality of the K\"unneth isomorphism.
\smallskip

\noindent
(d) By (c), the inverse system $\widetilde H^{m+k}(F_m)$ is equivalent to the inverse system $\bigoplus_{p+q=m+k}H^p(F)\ot\tilde H^q(\Map(G,H_m)),$ which by (a) consists of all polynomials
\[
 \sum_{k\geqslant 1}\sum_{i_1,\ldots,i_k} a_{i_1,\ldots,i_k}\ot x_{i_1}^{(m)}\cdots x_{i_k}^{(m)}
\]
with coefficients $a_{i_1,\ldots,i_k}\in H^{k+|b_{i_1}|+\ldots+|b_{i_k}|}(F),$ where the sum ranges over the finitely many $k\geqslant 0$ and $i_1,\ldots, i_k\in I$ for which $|b_{i_1}|+\ldots+|b_{i_k}|\leqslant m.$ According to (b), the connecting map restricts this expression to the summand with $k=1$ and substitutes all $x_i^{(m)}$ with $|b_i|=m$ by zero. An element in the inverse limit is a collection of such elements that is compatible under the connecting maps. These are simply the infinite sums $\sum_{i\in I}a_ix_i^{(m)}$ where $a_i\in H^{k+|b_i|}(F).$
\end{proof}

\section{Projective Euler operations}
\label{s4}

In this section, we compute the group $\Pi^*_{\F\t\Z}$ of stable pushforward operations for principal $\G$-bundles and orientations in twisted K-theory. The first step in \S\ref{s41} is to place the group $\Pi^*_{\F\t\Z}$ in an exact sequence. This allows us to construct the projective Euler operation and the projective rank operation in \S\ref{sGenSection}, where we prove Theorem~\ref{s1thm1}(a)--(f) and Theorem~\ref{Prop_S_Theta}; Parts (g)--(i) of Theorem~\ref{s1thm1} are proven  later in \S\ref{s43}. In \S\ref{Sec_Module_Structure} we introduce the module structure on $\Pi^*_{\F\t\Z}$ used in the classification problem and Theorem~\ref{thmA} is proven in \S\ref{s42}. The homological version of the projective Euler operation is constructed in \S\ref{Ssec_Homol_PE}.

\begin{ass}
\label{standard-setup}
Specialize the situation of \S\ref{s2} to the following setup of spaces.
\begin{enumerate}
\item
A topological abelian group model $G=\G$ for the classifying space of complex line bundles, where the group operation is a classifying map for the tensor product of line bundles.
\item
A commutative H-space model ${B\U\t\Z}$ for the classifying space of (virtual) complex vector bundles, where the H-space operation is a classifying map for the Whitney direct sum of vector bundles. 
\item
An involution $\smash{\breve{(\;)}}$ on ${B\U\t\Z}$ given by taking the dual vector bundle and preserving the rank. Composing $\smash{\breve{(\;)}}$ with an orientation $\th$ defines the \emph{dual orientation} $\breve\th$.
\item
A topological group action $\ell_{B\U\t\Z}\colon B\U(1)\t({B\U\t\Z})\to{B\U\t\Z}$ given by taking the tensor product of a line bundle with a vector bundle. This action preserves the rank and restricts to an operation $\ell_{\smash{\F\t\{r\}}}$ on $\F\t\{r\}.$
\end{enumerate}
\end{ass}

\subsection{Background on Chern classes and the Chern character}
\label{ssec:backgroundChern}

Recall from \cite[p.199]{M} that $H^*(B\U(r))$ is a free commutative algebra $\Q[c_1,\ldots,c_r]$ on the Chern classes of the universal complex vector bundle $V(r)=(E\U(r)\t\C^r)/\U(r)\to B\U(r).$ Letting $\ul\C$ denote the trivial bundle, the bundle $V(r)\op\ul\C\to B\U(r)$ is classified by a map $i_{r,r+1}\colon B\U(r)\to B\U(r+1).$ Let $B\U=\colim B\U(r)$ be the colimit and write $i_r\colon B\U(r)\to B\U$ for the canonical maps. Since $i_{r,r+1}^*\colon H^*(B\U(r+1))\to H^*(B\U(r))$ maps $c_k\mapsto c_k$ if $k\leqslant r$ and $c_{r+1}\mapsto 0,$ one finds that the Mittag--Leffler condition holds for this inverse system and hence by \cite[p.148]{M} the cohomology $H^*(B\U)$ is isomorphic to $\lim H^*(B\U(r))$, thus a polynomial ring $\Q[c_1,c_2,\ldots]$ in infinitely many generators. Here the generators are characterized by the property that $i_r^*(c_k)=c_k(V(r))$ for all $k\leqslant r$ and $i_r^*(c_k)=0$ for all $k>r.$

To state the effect of direct sums and tensor products on the cohomology, we consider the direct sum algebra $\bigoplus_{r\in\Z} H^*(B\U\t\{r\}),$ which is a free commutative algebra on $c_k^{(r)}\in H^{2k}(B\U\t\{r\})$ for all $k\geqslant 1$ and $r\in\Z.$ Let $\mu_{r_1,r_2}\colon B\U(r_1)\t B\U(r_2)\to B\U(r_1+r_2)$ be a classifying map of the direct sum vector bundle $V(r_1)\t V(r_2)\to B\U(r_1)\t B\U(r_2).$ The Whitney sum formula for Chern classes states $\mu_{r_1,r_2}^*\colon c_k^{(r_1+r_2)}\mapsto\sum_{i+j=k}c_i^{(r_1)}\cup c_j^{(r_2)},$ where we define $\y_0^{(r)}=1$ for all $r.$ Since the behaviour of Chern classes under tensor products is complicated, one introduces a different set of generators, the components of the Chern character $\z_k^{(r)}\in H^{2k}(B\U\t\{r\}).$ As in Hirzebruch~\cite[\S 10.1]{Hir}, the classes $\z_k^{(r)}$ are certain universal polynomials (independent of $r$) in the Chern classes $\y_k^{(r)}.$ Let $\tau_{r_1,r_2}\colon B\U(r_1)\t B\U(r_2)\to B\U(r_1r_2)$ be the classifying map of the external tensor product vector bundle $V(r_1)\boxtimes V(r_2)\to B\U(r_1)\t B\U(r_2).$ According to Hirzebruch~\cite[\S 10.1]{Hir} one has $\mu_{r_1,r_2}^*\colon \z_k^{(r_1+r_2)}\mapsto\z_k^{(r_1)}+\z_k^{(r_2)}$ and $\tau_{r_1,r_2}^*\colon \z_k^{(r_1r_2)}\mapsto\sum_{i+j=k}\z_i^{(r_1)}\cup \z_j^{(r_2)},$ where we define $\z_0^{(r)}=r$ for all $r.$ For $k\geqslant 1$ the generators $\y_k^{(r)}$ and $\z_k^{(r)}$ do not depend explicitly on $r,$ so we will drop the rank from the notation and simply write $\y_k$ for $\y_k^{(r)}$ and $\z_k$ for $\z_k^{(r)}.$

\subsection{Exact classification sequence}
\label{s41}

Let $G=\G$ and $F=\F\t\{r\}.$ Recall from \eqref{diag_Overview_Of_Spaces} the space $F_m=(F\t\Map(G,H_m))/(F\t\{*_{\Map}\})$ and the space $E_m,$ obtained by collapsing the base-point section of the principal $G$-bundle $B_m^\cla=EG\ot_G(F\t\Map(G,H_m))\to EG\ot_GF.$ By Theorem~\ref{s2thm1} and Definition~\ref{Dfn_TwistedK}, $\Pi_{\smash{\F\t\{r\}}}^k=\lim\widetilde{H}^{m+k}(E_m)$ is the group of stable pushforward operations of degree $k$ with orientations in twisted K-theory of rank $r.$ Let $\Pi_{\smash{\F\t\{r\}}}^*=\bigoplus_{k\in\Z}\Pi_{\smash{\F\t\{r\}}}^k$ and $\Pi^*_{\F\t\Z}=\prod_{r\in\Z}\Pi^*_{\F\t\{r\}}.$

We can view $B_m^\cla$ as a bundle over $BG=BB\U(1)$ with fiber $F_m$ and over $\Q$ replace the base by $S^3,$ by the following argument from \cite{AtSe06}. Let $\ga\colon\mathbb{CP}^\iy\to\G$ be the classifying map of the tautological complex line bundle. Define a principal $\G$-bundle over $S^3$ by clutching a pair of trivial bundles on the two hemispheres over the boundary $S^2\cong\mathbb{CP}^1$ using the transition function $\ga|_{\mathbb{CP}^1}\colon\mathbb{CP}^1\to\G.$ Let $\phi_{S^3}\colon S^3\to B\G$ be a classifying map for this principal $\G$-bundle. Depending on how one identifies the hemispheres, there are two possibilities differing by an inversion; our convention is fixed by the sign in \eqref{WangConnecting}. We then have a pullback bundle $\phi_{S^3}^*(B_m^\cla)=\phi_{S^3}^*(EG)\ot_G(F\t\Map(G,H_m))\to\phi_{S^3}^*(EG)\ot_G F$ and a quotient space, written $\phi_{S^3}^*(E_m),$ obtained by collapsing the base-point section to a point. The maps $\ep_m$ from Definition~\ref{Dfn_EmFmSpectra} pull back to connecting maps $\Si\phi_{S^3}^*(E_m)\to \phi_{S^3}^*(E_{m+1})$ which determine an inverse system $\widetilde H^{n+1}(\phi_{S^3}^*(E_{m+1}))\to\widetilde H^{n+1}(\Si \phi_{S^3}^*(E_m))\cong \widetilde H^n(\phi_{S^3}^*(E_m))$ as in \eqref{inverseSystem}. Since $BB\U(1)=K(\Z,3)$ is a rational homotopy sphere and the transition function $\ga|_{\mathbb{CP}^1}$ is a generator of $\pi_2(B\U(1))\cong\pi_3(BB\U(1))=\Z,$ we obtain the following.

\begin{prop}
\label{prop:algebraic-char-pushforward}
We have\/ $\Pi_{\smash{\F\t\{r\}}}^*=\lim\widetilde{H}^{m+*}(E_m)\cong\lim\widetilde{H}^{m+*}(\phi_{S^3}^*(E_m)).$
\end{prop}

%

\begin{dfn}
\label{dfnPartial}
Let $\6$ be the algebra derivation on $\bigoplus_{r\in\Z} H^*(B\U\t\{r\})$ defined on $H^*(\F\t\{r\})$ by $\6(\y_j)=(r-j+1)\y_{j-1}$ on the generators.
\end{dfn}

\begin{lem}
\label{altdfnPartial}
	We have\/ $\6(\z_{j+1})=\z_j$ for all\/ $j\geqslant 0$ and\/ $\6(\z_0)=0.$
\end{lem}

\begin{proof}
	By the splitting principle, $\z_j$ on $\bigoplus_{r\in\Z} H^*(B\U\t\{r\})$ is characterized by its additivity under direct sums and by $\z_j(L)=\frac{1}{j!}c_1(L)^j$ for complex line bundles. As $\6$ is linear, $\6(\z_{j+1})$ is again additive under direct sums. If $L$ is a complex line bundle, then
	\[
\6(\z_{j+1}(L))=\6\left(\frac{c_1(L)^{j+1}}{(j+1)!}\right)=\frac{(j+1)c_1(L)^j\6c_1(L)}{(j+1)!}\overset{r=1}{=}\frac{c_1(L)^j}{j!}=\z_j(L).\qedhere
	\]
\end{proof}


Recall that $H^*(\G)$ is a polynomial ring $\Q[c_1]$ on the first Chern class of the universal complex line bundle. Let $t\in H_2(\G)$ be the homology class dual to $c_1.$ The group operation $\mathrm{m}_\G$ on $\G$ induces a product on  $H_*(\G)$ for which the $i$\textsuperscript{th} divided power $t^i/i!$ becomes dual to $c_1^i.$ The group of stable pushforward operations $\Pi_{\smash{\F\t\{r\}}}^{*}$ can be computed using an exact sequence involving the groups $\Ga_\F^*=\bigoplus_{k\in\Z}\Ga_\F^k.$ By applying Theorem~\ref{s2thm2}(d) to the basis $\{t^i\}_{i\geqslant0}$, we see that $\Ga_\F^k=\lim\widetilde{H}^{m+k}(F_m)$ is isomorphic to the group of formal power series $\sum_{i\geqslant 0} \xi_i x_i$ where $\xi_i\in H^{2i+k}(\F).$ Here the projection of an element $\sum_{i\geqslant 0} \xi_i x_i$ of the inverse limit to $\widetilde{H}^{m+k}(F_m)\cong \bigoplus_{p+q=m+k} H^p(\F)\ot H^q(\Map(\G,H_m))$ is $\sum_{0\leqslant i\leqslant m/2} \xi_i\ot x_i^{(m)},$ where $x_i^{(m)}=t^i\backslash\ev^*(\jmath_m).$


\begin{thm}
\label{s3thm1}
 There is an exact sequence
\begin{equation}
\label{s3eq7}
\begin{tikzcd}
  0\rar & \Pi_{\smash{\F\t\{r\}}}^{2*}\rar & \Ga_\F^{2*}\rar{\de_r} & \Ga_\F^{2*-2}\rar & \Pi_{\smash{\F\t\{r\}}}^{2*+1}\rar & 0,
 \end{tikzcd}
\end{equation}
where\/ $\de_r$ is the unique\/ $\Q$-linear map such that for all\/ $\xi\in H^*(\F)$ and\/ $i\geqslant0,$
 \e
 \label{s3eq4}
  \de_r(\xi x_i)=\6(\xi)x_i+\xi x_{i+1}.
 \e
In particular, we can identify $\Pi_{\smash{\F\t\{r\}}}^{2e}$ with a submodule of $\Ga_\F^{2e}$ and then rational stable pushforward operations of degree\/ $2e$ correspond to formal infinite series\/ $\Xi^{2e}_\cla=\sum_{i\geqslant 0}\xi_i x_i,$ where\/ $\xi_i\in H^{2e+2i}(\F\t\{r\})$ are cohomology classes with
\ea
\label{sequence}
 \6\xi_0&=0, &\6\xi_{i+1}&=-\xi_i\enskip(\forall i\geqslant 0).
\ea
Moreover, odd-degree rational stable pushforward operations correspond to elements in the cokernel of\/ $\de_r.$
\end{thm}

\begin{proof}
For each $m,$ consider the Wang exact sequence of the bundle $\phi_{S^3}^*(E_m)\to S^3$ with fiber $F_m.$ This sequence is obtained from the long exact sequence of the pair $(\phi_{S^3}^*(E_m), F_m)$ by using the identification $H^*(\phi_{S^3}^*(E_m),F_m)\cong\widetilde H^{*-3}(F_m)$ induced by the homeomorphism $\phi_{S^3}^*(E_m)/F_m\simeq\Si^3(F_m).$ Taking the inverse limit of Wang sequences over $m$, we can compare the resulting exact sequence to \eqref{s2eq9} and we obtain the following commutative diagram of exact sequences.
\[
\begin{tikzcd}[column sep=3.8ex]
 \rar{\de_r}&\lim H^*(E_m,F_m)\dar\rar& \begin{array}[b]{l}\Pi_{\smash{\F\t\{r\}}}^*\\[1pt]=\lim\widetilde H^*(E_m)\end{array}\dar{\cong}\rar& \begin{array}[b]{l}\Ga_\F^*\\[1pt]=\lim\widetilde H^*(F_m)\end{array}\dar{\id}\rar{\de_r}& {}
\\
 \rar{\partial}&\begin{array}[t]{l}\lim H^*(\phi_{S^3}^*(E_m),F_m)\\[1pt]\cong\lim\widetilde H^{*-3}(F_m)\mathrlap{=\Ga_\F^{*-3}}\end{array}\rar& \lim\widetilde H^*(\phi_{S^3}^*(E_m))\rar& \lim\widetilde H^*(F_m) \rar{\partial}& {}
\end{tikzcd}
\]
The middle vertical map is an isomorphism by Proposition~\ref{prop:algebraic-char-pushforward}, so the left vertical map is an isomorphism by the five lemma. Since $\Ga_\F^*$ is concentrated in even degrees, the upper sequence reduces to \eqref{s3eq7}, where we identify $H^*(E_m,F_m)\cong\Ga_\F^{*-3}.$ 

According to Atiyah--Segal \cite[(3.8)]{AtSe06}, the differential $\partial$ in the Wang sequence is a derivation and can be computed as follows. As $\phi_{S^3}^*(B_m^\cla)\to S^3$ is obtained by clutching a pair of trivial bundles over the hemispheres along $S^2\cong\mathbb{CP}^1$ using the gluing map
\[
 \varphi\colon\mathbb{CP}^1\t F_m\longra \mathbb{CP}^1\t F_m,
 \quad
 (c,f)\longmapsto (c,\ell_{F_m}(\ga(c),f)),
\]
one has $\varphi^*(1\ot z)=1\ot z+[\mathbb{CP}^1]\ot \partial(z),$ so
\e
\label{WangConnecting}
 \partial(z)=[\mathbb{CP}^1]\backslash(\ga|_{\mathbb{CP}^1}\t\id_{\F\t\Map})^*\ell_{\F\t\Map}^*(\ga^*(z)).
\e
We verify \eqref{s3eq4} on the generators $\pi_{\F}^*(c_j)$ and $\pi_{\Map}^*(x_i^{(m)}).$ Let $\mathrm{m}_\G\colon\G\t\G\to\G,$ $(g,h)\mapsto gh.$ Observe that
\ea
 \ev\circ(\id_{\G}\t\ell_{\Map})&=\ev\circ(\mathrm{m}_\G\t\id_{\Map}),\label{ActProject0}\\
 \pi_{\Map}\circ\ell_{\F\t\Map}&=\ell_{\Map}(\id_{\G}\t\pi_{\Map}),\label{ActProject1}\\
\pi_{\F}\circ\ell_{\F\t\Map}&=\ell_{\F\t\{r\}}(\id_{\G}\t\pi_{\F}),
\label{ActProject2}
\ea
for the projections $\pi_\F$ and $\pi_{\Map}$ of $\F\t\Map(\G,H_m).$ We then compute
\begin{align*}
&\partial(\pi_{\Map}^*(x_i^{(m)}))
\\&=[\mathbb{CP}^1]\backslash(\ga|_{\mathbb{CP}^1}\t\pi_{\Map})^*\ell_{\Map}^*(x_i^{(m)})&&\text{by \eqref{WangConnecting} and \eqref{ActProject1}}\\
&=\pi_{\Map}^*\bigl(t\backslash\ell_{\Map}^*(t^i\backslash\ev^*(\jmath_m))\bigr) && \text{nat.~of `$\backslash$' and $\ga_*([\mathbb{CP}^1])=t$}\\
&=\pi_{\Map}^*\bigl((t\t t^i)\backslash(\id_{\G}\t\ell_{\Map})^*\ev^*(\jmath_m)\bigr) && \text{properties of `$\backslash$'}\\
&=\pi_{\Map}^*\bigl((t\t t^i)\backslash(\mathrm{m}_\G\t\id_{\Map})^*\ev^*(\jmath_m)\bigr) && \text{by \eqref{ActProject0}}\\
&=\pi_{\Map}^*\bigl(t^{i+1}\backslash\ev^*(\jmath_m)\bigr)=x_{i+1}^{(m)} && \text{by $(\mathrm{m}_\G)_*(t\t t^i)=t^{i+1}.$}
\end{align*}
To verify \eqref{s3eq4} on $\pi_{\F}^*(c_j),$ observe that \eqref{cj-Tensor} implies
\e
\label{s3eq6}
\ell_{\smash{\F\t\{r\}}}^*(\y_j)=\sum\nolimits_{k+\ell=j}\binom{r-\ell}{k}c_1^k\t \y_\ell,
\e
so
\begin{align*}
 \partial\bigl(\pi_\F^*(\y_j)\bigr)&=[\mathbb{CP}^1]\backslash(\ga|_{\mathbb{CP}^1}\t\pi_\F)^*\ell_{\smash{\F\t\{r\}}}^*(\y_j)&&\text{by \eqref{WangConnecting} and \eqref{ActProject2}}\\
 &=t\backslash(\id_\G\t\pi_\F)^*\ell_{\smash{\F\t\{r\}}}^*(\y_j)&&\text{by $\ga_*([\mathbb{CP}^1])=t$}\\
 &=\pi_\F^*\bigl(t\backslash\,\ell_{\smash{\F\t\{r\}}}^*(\y_j)\bigr)&&\text{nat.~of `$\backslash$'}\\
 &=(r-j+1)\pi_\F^*(\y_{j-1})&&\text{by \eqref{s3eq6}.}\qedhere
\end{align*}
\end{proof}

Let $\Xi^{*|*+2e}$ be a stable pushforward operation. The class $\Xi^{2e}_\cla=\sum_{i\geqslant 0}\xi_i x_i\in\Pi_{\F\t\{r\}}^{2e}$ satisfies \eqref{sequence}. We call $\xi_0\in H^{2e}(B\U\t\{r\})$ the \emph{constant term} of $\Xi^{2e}_\cla.$

\begin{rem}
\label{rem:similarWang}
As in the proof of Theorem~\ref{s3thm1}, the rational equivalence $\phi\colon S^3\to B\G$ implies that, more generally, all fiber bundles $E\to BB\U(1)$ with fiber $F$ admit long exact Wang sequences $\cdots\to\widetilde H^{*-3}(F)\to \widetilde H^*(E)\to \widetilde H^*(F)\to\cdots.$ If the cohomology of $F$ is concentrated in even degrees, these are particularly useful. For example, the Wang sequence for $E\G\ot_\G(\F\t\{r\})\to B\G$ is
\begin{equation}
\label{AtSeSequence}
\begin{multlined}
  0\longra H^{2*}(E\G\ot_\G(\F\t\{r\}))\longra H^{2*}(\F\t\{r\})\\\overset{\partial}{\longra} H^{2*-2}(\F\t\{r\})
  \longra H^{2*+1}(E\G\ot_\G(\F\t\{r\}))\longra0.
\end{multlined}
\end{equation}
The computation after \eqref{s3eq6} shows that $\partial$ satisfies $\partial(c_j)=(r-j+1)c_{j-1}.$ Since $\xi_0$ is in the kernel of $\6$ by \eqref{sequence}, the exact sequence \eqref{AtSeSequence} shows that $\xi_0$ can be lifted uniquely to a class $\xi_0^\G\in H^{2*}(E\G\ot_\G\F)$ and hence determines a characteristic class for twisted K-theory classes of rank $r$ as in Definition~\ref{dfn_EquivCharClass}. This argument is parallel to \cite[Prop.~8.8]{AtSe06} and \eqref{AtSeSequence} is just \cite[(3.5)]{AtSe06}.

Similarly, the cohomology of $\Map(\G,H_{2m})$ is concentrated in even degrees and so there is a similar Wang sequence
\[
\label{AtSeSequence2}
\begin{multlined}
  0\longra H^{2*}(E\G\ot_\G\Map(\G,H_{2m}))\longra H^{2*}(\Map(\G,H_{2m}))\\\overset{\de}{\longra} H^{2*}(\Map(\G,H_{2m}))
  \longra H^{2*+3}(E\G\ot_\G\Map(\G,H_{2m})).
\end{multlined}
\]
The computation after \eqref{ActProject2} shows that $\de(x_i^{(2m)})=x_{i+1}^{(2m)},$ where we keep in mind that $x_{i+1}^{(2m)}=0$ unless $2(i+1)\leqslant 2m.$ In particular, all $x_m^{(2m)}$ can be lifted to $H^0(E\G\ot_\G\Map(\G,H_{2m})).$
\end{rem}

\begin{dfn}
\label{DiamondAction}
Let $P\to B$ be a principal $\G$-bundle with principal action $\ell_P\colon \G\t P\to P.$ Define an action of $H_*(\G)$ on $H^*(P)$ by $u\mathbin{\diamond}\al=u\backslash\ell_P^*(\al)$ for all $u\in H_*(\G)$ and $\al\in H^*(P).$ Recall that $H_*(\G)=\Q[t]$ is a polynomial ring generated by the homology class $t$ dual to $c_1.$ Since $t\diamond$ is a nilpotent endomorphism of $H^*(P),$ this action extends to an action of $\Q\llbracket t\rrbracket$ on $H^*(P).$
\end{dfn}

Suppose the principal bundle has a section $s\colon B\to P.$ We can then define $\ga_{P,s}\colon P\to G$ by $\ga_{P,s}(gs(b))=g$ for all $b\in B$ and $g\in G,$ which is just the fiber projection of the splitting induced by $s.$ Moreover, the underlying K-theory class $\tilde\th\in K(P)$ of $\th\in K_P(B)$ pulls back to a K-theory class $s^*(\tilde\th)\in K(B).$

\begin{cor}
\label{Cor_EvenDegree_Many_Properties}
Let\/ $\Xi^{2e}_\cla=\sum_{i\geqslant 0}\xi_i x_i$ in\/ $\Pi_{\F\t\{r\}}^{2e}$ be the class of an even-degree stable pushforward operation\/ $\Xi^{*|*+2e}.$ Let\/ $\pi\colon P\to B$ be a principal\/ $\G$-bundle with orientation\/ $\th\in K_P(B)$ of rank\/ $r.$
\begin{enumerate}
\item
Let\/ $P=\G\t B$ be the trivial bundle, where the orientation is given by\/ $\tilde\th=V(1)\boxtimes\vartheta$ for $\vartheta\in K(B)$ \textup(recall the bundle\/ $V(1)$ from \textup{\S\ref{ssec:backgroundChern}}\textup). Let\/ $s(b)=(1_\G,b),$ where\/ $1_\G$ is the unit in\/ $\G.$ Then\/ $\Xi_{P,\th}(c_1^i/i!\t 1_{H^*(B)})$ is the characteristic class\/ $\xi_i\in H^{2i}(\F\t\{r\}).$ More generally, if a principal $\G$-bundle\/ $P$ admits a section\/ $s,$ then
\e
 \label{s3eq8}
 \Xi_{P,\th}\Bigl(\ga_{P,s}^*\Bigl(\frac{c_1^i}{i!}\Bigr)\Bigr)=\xi_i(s^*(\tilde\th)).
\e
In particular, even-degree stable pushforward operations are determined by their values on the trivial bundle with arbitrary orientations\/ $\th.$
\item
We have\/ $\Xi_{P,\th}(\al\mathbin\cup\pi^*(\be))=\Xi_{P,\th}(\al)\mathbin\cup\be$ for all\/ $\al\in H^*(P)$ and\/ $\be\in H^*(B).$
\item
The image\/ $\Xi_{P,\th}(1_{H^*(P)})$ of the unit is the characteristic class\/ $\xi^\G_0(\th).$ Hence the pull-push formula\/ $\bigl(\Xi_{P,\th}\circ \pi^*\bigr)(\be)=\xi_0^\G(\th)\mathbin\cup\be$ holds for all\/ $\be\in H^*(B).$
\item
Push-pull formula:\/ $\bigl(\pi^*\circ\Xi_{P,\th}\bigr)(\al)=\sum_{i\geqslant0}(t^i\diamond\al)\mathbin\cup\xi_i(\tilde\th)$ for\/ $\al\in H^*(P).$
\item
Let\/ $A$ be a topological space and let\/ $\pi_B\colon A\t B\to B$ and\/ $\pi_P\colon A\t P\to P$ be the projections. For the bundle\/ $A\t P\cong\pi_B^*(P)$ and the pullback orientation\/ $A\t\th=\pi_P^*(\th),$ we have\/ $
 \Xi_{A\t P,A\t\th}=\id_{H^*(A)}\t\,\Xi_{P,\th}.$
\end{enumerate}
\end{cor}

\begin{proof}
(a)\hskip\labelsep
We apply Proposition~\ref{s2prop2}(c) to $\al=c_1^i\t 1_{H^*(B)}.$ Let $h_{c_1^i}\colon\G\to H_{2i}$ represent the class $c_1^i\in H^{2i}(\G)$ and define $h_\al=h_{c_1^i}\circ\pi_{\G}$ using the projection $\pi_\G\colon \G\t B\to\G.$ Then $h_\al^\dagger\colon B\to\Map(\G,H_{2i})$ takes the constant value $h_{c_1^i}.$ Using $\ev\circ(\id_{\G}\t h_\al^\dagger)=h_{c_1^i}\circ{\pi_\G},$ we compute:
\begin{align*}
&\Xi_{P,\th}(c_1^i\t 1_{\cramped{H^*(B)}})\\
&=\sum_{j\geqslant0}\!\vartheta^*(\xi_j)\cup(h_\al^\dagger)^*(t^j\backslash\ev^*(\jmath_{2i})) && \text{by \eqref{XmnTrivialFormula} and \eqref{DefXI}}\\
 &=\sum_{j\geqslant0}\vartheta^*(\xi_j)\cup t^j\backslash(h_{c_1^i}\circ\pi_\G)^*(\jmath_{2i}) && \text{nat.~of `$\backslash$'}\\
  &=\sum_{j\geqslant0}\vartheta^*(\xi_j)\cup t^j\backslash(c_1^i\t 1_{H^*(B)})=i!\vartheta^*(\xi_i) && 
   \begin{aligned}
   \text{by }\pi_\G^*(c_1^i)&=c_1^i\t 1_{H^*(B)}\\
   (c_1^i\t 1_{H^*(B)})/t^j&=\de_{i,j}i!1_{H^*(B)}.
   \end{aligned}
\end{align*}
This proves \eqref{s3eq8} for $P$ trivial; the general case follows by naturality, using the isomorphism $(\ga_{P,s},\pi)\colon P\to \G\t B.$ The last claim in (a) is obvious, as $\Xi^{2e}_\cla$ is given by its coefficients $\xi_i$ which are determined entirely by their values on arbitrary K-theory classes and these are equivalently orientations on the trivial bundle.
\smallskip

\noindent
(b)\hskip\labelsep To apply Proposition~\ref{s2prop5}, we must verify that the class of the stable pushforward operation $\Xi_\cla^{2e}=\{\Xi^{m|m+2e}_\cla\}_{m\ge 0}$ satisfies $\rho_{m_1,m_2}^*(\Xi^{m_1+m_2|m_1+m_2+2e}_\cla)=\Xi^{m_1|m_1+2e}_\cla\t\jmath_{m_2}$ in $H^{m_1+m_2+2e}(B_{m_1}^\cla\t H_{m_2}).$ The maps $\rho_{m_1,m_2}\colon B_{m_1}^\cla\t H_{m_2}\to B_{m_1+m_2}^\cla$ and $\ol\rho_{m_1,m_2}\colon\Map(\G,H_{m_1})\t H_{m_2}\to \Map(\G,H_{m_1+m_2})$ from \eqref{rhoPullback} satisfy $\ev\circ\allowbreak(\id_\G\t\ol\rho_{m_1,m_2})=\mathrm{M}_{m_1,m_2}\circ(\ev\t\id_{H_{m_2}}),$ so
 \begin{align*}
 &\ol\rho_{m_1,m_2}^*\bigl(x_i^{(m_1+m_2)}\bigr)=\ol\rho_{m_1,m_2}^*\bigl(t^i\backslash\ev^*(\jmath_{m_1+m_2})\bigr) && \text{by \eqref{DefXI}}\\
&=t^i\backslash(\id_{\G}\t\ol\rho_{m_1,m_2})^*\ev^*(\jmath_{m_1+m_2}) && \text{nat.~of `$\backslash$'}\\
  &=t^i\backslash(\ev\t\id_{H_{m_2}})^*\mathrm{M}_{m_1,m_2}^*(\jmath_{m_1+m_2})\\
  &=t^i\backslash(\ev^*(\jmath_{m_1})\t\jmath_{m_2})=x_i^{(m_1)}\t\jmath_{m_2} && \text{by $\mathrm{M}_{m_1,m_2}^*(\jmath_{m_1+m_2})=\jmath_{m_1}\t\jmath_{m_2}.$}
 \end{align*}
Since $\Xi^{2e}_\cla=\sum_{i\geqslant 0}\xi_i x_i$ we have $\Xi^{m|m+2e}_\cla=\sum_{i\geqslant 0}\xi_i x_i^{(m)}$ for all $m,$ hence
\begin{align*}
 \rho_{m_1,m_2}^*(\Xi^{m_1+m_2|m_1+m_2+2e}_\cla)&=\rho_{m_1,m_2}^*\Bigl(\sum\nolimits_{i\geqslant 0} \xi_i x_i^{(m_1+m_2)}\Bigr)\\
 &=\sum\nolimits_{i\geqslant 0} \xi_i \ol\rho_{m_1,m_2}^*\bigl(x_i^{(m_1+m_2)}\bigr)\\
 &=\sum\nolimits_{i\geqslant 0} \xi_i x_i^{(m_1)}\t\jmath_{m_2}=\Xi^{m_1|m_1+2e}_\cla\t\jmath_{m_2}.
\end{align*}

\noindent
(c)\hskip\labelsep
By projecting the element $\Xi^{2e}_\cla=\sum_{i\geqslant 0}\xi_i x_i$ of the inverse limit to the factor with $m=0,$ we have $\Xi_\cla^{0|2e}=\xi_0\ot x_0^{(0)}.$ According to Remark~\ref{rem:similarWang}, $\xi_0$ lifts uniquely to a class $\xi_0^\F$ in $H^0(E\G\ot_\G(\F\t\{r\}))$ and $x_0^{(0)}$ lifts uniquely to a class $x$ in $H^0(E\G\ot_\G\Map(\G,H_0)),$ hence the hypotheses of Proposition~\ref{PropImageof1} are satisfied. This proves the first part of the claim in (c). Given this, the pull-push formula then follows from (b).
\smallskip

\noindent
(d)\hskip\labelsep
Let $\ell_P\colon \G\t P\to P$ be the principal action. Observe that $\ell_P^*(\al)=\sum_{i\geqslant 0}c_1^i\t\bigl(\frac{t^i}{i!}\diamond\al\bigr).$ The pullback $\pi^*(P)$ has the section $s(p)=(p,p).$ The two coordinate projections $\pi_1$ and $\pi_2$ fit into a pullback square
\[
\begin{tikzcd}
	\pi^*(P)\rar{\pi_1}\dar{\pi_2} & P\dar{\pi}\\
	P\rar{\pi} & B\mathrlap{.}
\end{tikzcd}
\]
Moreover, the pullback orientation $\pi_1^*(\th)$ pulls back under the section $s$ to the class $\tilde\th$ on the base $P.$ Pulling back the formula for $\ell_P^*(\al)$ along $(\ga_{\pi^*(P),s},\pi_2)$ and using $\ell_P\circ(\ga_{\pi^*(P),s},\pi_2)=\pi_1$ implies $\pi_1^*(\al)=\sum_{i\geqslant 0}\pi_2^*\bigl(\frac{t^i}{i!}\diamond\al\bigr)\cup\ga_{\pi^*(P),s}^*(c_1^i),$ hence
\begin{align*}
&\bigl(\pi^*\circ\Xi_{P,\th}\bigr)(\al)\overset{\eqref{s2eq2}}{=}\Xi_{\pi^*(P),\pi^*_1(\th)}(\pi_1^*(\al))\\
 &\overset{\text{(b)}}{=}\sum\nolimits_{i\geqslant 0}\Bigl(\frac{t^i}{i!}\diamond\al\Bigr)\cup\Xi_{\pi^*(P),\pi^*_1(\th)}\bigl(\ga_{\pi^*(P),s}^*(c_1^i)\bigr)
 \overset{\text{(a)}}{=}\sum\nolimits_{i\geqslant 0}(t^i\diamond\al)\cup \xi_i(\tilde\th).
\end{align*}
\smallskip

\noindent
(e)\hskip\labelsep Let $\al\in H^*(A)$ and $\be\in H^*(P).$ By the K\"unneth Theorem, it suffices to verify the claimed formula at $\al\t\be.$ Using (b) and naturality \eqref{s2eq2}, we compute
\begin{align*}
&\Xi_{A\t P,A\t\th}(\al\t\be)=\Xi_{A\t P,A\t\th}\bigl((\id_A\t\pi)^*(\al\t 1_{H^*(B)})\cup\pi_P^*(\be)\bigr)\\
 &=(\al\t 1_{H^*(B)})\cup \Xi_{A\t P,A\t\th}\circ\pi_P^*(\be)\\&=(\al\t 1_{H^*(B)})\cup \pi_B^*\circ\Xi_{P,\th}(\be)=\al\t\Xi_{P,\th}(\be).\qedhere
\end{align*}
\end{proof}

\subsection{\texorpdfstring{Proof of Theorem~\ref{s1thm1}(a)--(f) and Theorem~\ref{Prop_S_Theta}}{Proof Theorem 1.1(a)-(f) and Theorem 1.2}}
\label{sGenSection}

Recall from Theorem~\ref{s3thm1} that rational stable pushforward operations of degree $2e$ correspond to formal infinite series $\Xi_\cla^{2e}=\sum_{i\geqslant 0}\xi_i x_i,$ where $\xi_i \in H^{2e+2i}(\F\t\{r\})$ satisfy \eqref{sequence}.

\begin{dfn}
\label{Dfn_PE}
\hangindent\leftmargini
\textup{(a)}\hskip\labelsep
For each $r\in\Z$ the \emph{projective Euler operation} is the rational stable pushforward operation with class $\Xi_\cla^{\PE}=\sum_{i\geqslant0}\frac{\y_{i+r+1}}{i!} x_i\in\Pi^{2r+2}_{\F\t\{r\}}.$ For a principal $\G$-bundle $\pi\colon P\to B$ with orientation $\th$ of rank $r,$ we write $\pi^\th_!=\Xi^{\PE}_{P,\th}$ for the corresponding pushforward operation.
\begin{enumerate}
\setcounter{enumi}{1}
\item
The \emph{projective rank operation} is the rational stable  pushforward operation of degree $0$ with class $\Xi^{\rk}_\cla=\sum_{i\geqslant0}(-1)^i\z_i x_i.$ We simply write $s_\th^*$ for $\Xi^{\rk}_{P,\th}.$
\end{enumerate}
\end{dfn}

Using Definition~\ref{dfnPartial} and Lemma~\ref{altdfnPartial},
one checks that the coefficients of $\Xi_\cla^{\PE}$ and $\Xi_\cla^{\rk}$ satisfy \eqref{sequence}, so the existence of the rational stable pushforward operations $\pi^\th_!$ and $s_\th^*$ is a consequence of Theorem~\ref{s3thm1}. Parts (a) and (b) of Theorems~\ref{s1thm1}--\ref{Prop_S_Theta} follow from the fact that $\Xi_\cla^{\PE}$ and $\Xi_\cla^{\rk}$ are stable pushforward operations, see Definition~\ref{s2dfn1}(c) and Definition~\ref{s22def0}. Parts (c) of Theorems~\ref{s1thm1}--\ref{Prop_S_Theta} follow from Corollary~\ref{Cor_EvenDegree_Many_Properties}(a), where $\xi_i=\frac{c_{i+r+1}}{i!}$ for the projective Euler operation and $\xi_i=(-1)^i\z_i$ for the projective rank operation. Parts (d) of Theorems~\ref{s1thm1}--\ref{Prop_S_Theta} follow from Corollary~\ref{Cor_EvenDegree_Many_Properties}(c), since $\xi_0=c_{r+1}$ for the projective Euler operation and $\xi_0=\z_0=r$ for the projective rank operation. Parts (e) of Theorems~\ref{s1thm1}--\ref{Prop_S_Theta} follow from Corollary~\ref{Cor_EvenDegree_Many_Properties}(b). Parts (f) of Theorems~\ref{s1thm1}--\ref{Prop_S_Theta} follow from Corollary~\ref{Cor_EvenDegree_Many_Properties}(d).

\subsection{Module structure on group of pushforward operations}
\label{Sec_Module_Structure}

The next step in the classification is to make $\Pi^*_{\F\t\{r\}}$ a module over a large ring. 

\begin{dfn}
\label{dfn37}
Let $P\to B$ be a principal $\G$-bundle with orientation $\th.$ Recall the endomorphism $t\diamond-$ of $H^*(P)$ from Definition~\ref{DiamondAction}. Let $\tilde\th\in K(P)$ be the underlying K-theory class of $\th.$ For each $j,$ multiplication by the Chern character $\z_j(\tilde\th)\in H^{2j}(P)$ defines further endomorphisms of $H^*(P).$

Let $\Xi^{*|*+k}$ be a stable pushforward operation of degree $k.$ We can then construct new operations $\z_j(\Xi)$ of degree $k+2j$ and $t(\Xi)$ of degree $k-2$ by
\ea
 \z_j(\Xi)_{P,\th}(\al)&=\Xi_{P,\th}\bigl(\al\cup \z_j(\tilde\th)\bigr),
&t(\Xi)_{P,\th}(\al)&=\Xi_{P,\th}(t\diamond\al).\label{Rmod}
\ea
\end{dfn}

\begin{prop}
If\/ $\Xi^{*|*+k}$ is a stable pushforward operation, then\/ $\z_j(\Xi)$ and\/ $t(\Xi)$ are again stable pushforward operations.	
\end{prop}

\begin{proof}
We check naturality. As in Definition~\eqref{s2dfn1}(c), let $(\phi,\Phi)\colon P_1\to P_2$ be a morphism of principal $\G$-bundles such that $\th_1=\Phi^*(\th_2)$ for the orientations. Then $\tilde\th_1=\Phi^*(\tilde\th_2)$ for the associated K-theory classes, so $\z_j(\tilde\th_1)=\Phi^*(\z_j(\tilde\th_2))$ and
\begin{align*}
&\z_j(\Xi)_{P_1,\th_1}\bigl(\Phi^*(\al)\bigr)=\Xi_{P_1,\th_1}\bigl(\Phi^*(\al)\cup\z_j(\tilde\th_1)\bigr)=\Xi_{P_1,\th_1}\bigl(\Phi^*(\al\cup\z_j(\tilde\th_2))\bigr)\\
 &\overset{\eqref{s2eq2}}{=}\bigl(\phi^*\circ\Xi_{P_2,\th_2}\bigr)\bigl(\al\cup\z_j(\tilde\th_2)\bigr)
 =\bigl(\phi^*\circ\z_j(\Xi)_{P_2,\th_2}\bigr)(\al).
\end{align*}
Similarly, using $\Phi\circ\ell_{P_1}=\ell_{P_2}\circ(\id_{\G}\t\Phi)$ for the principal actions,
\begin{align*}
 &t(\Xi)_{P_1,\th_1}\bigl(\Phi^*(\al)\bigr)=\Xi_{P_1,\th_1}\bigl(t\backslash(\Phi\t\id_{\G})^*\ell_{P_2}^*(\al)\bigr)
 =\Xi_{P_1,\th_1}\bigl(\Phi^*(t\backslash\ell_{P_2}^*(\al))\bigr)\\
 &\overset{\eqref{s2eq2}}{=}\bigl(\phi^*\circ\Xi_{P_2,\th_2}\bigr)\bigl(t\backslash\ell_{P_2}^*(\al)\bigr)=\bigl(\phi^*\circ t(\Xi)_{P_2,\th_2}\bigr)(\al).
\end{align*}
Moreover, $\z_j(\Xi)(0)=0$ and $t(\Xi)(0)=0$ show that both are pointed pushforward operations. To verify stability for $\z_j(\Xi),$ let $P\to B$ be a principal $\G$-bundle with orientation $\th.$ Let $\varsigma_P,$ $\varsigma_B$ be the suspensions as in \eqref{s2eq7}. Using the projection $\pi_P\colon P\t S^1\to P,$ we compute
\begin{align*}
&\z_j(\Xi)_{P\t S^1,\th\t S^1}\bigl(\varsigma_P(\al)\bigr)=\Xi_{P\t S^1,\th\t S^1}\bigl(\varsigma_P(\al)\cup\pi_P^*(\z_j(\tilde\th))\bigr)\\
 &=\Xi_{P\t S^1,\th\t S^1}\bigl(\varsigma_P(\al\cup\z_j(\tilde\th))\bigr)\overset{\eqref{s2eq7}}{=}\bigl(\varsigma_B\circ\Xi_{P,\th}\bigr)\bigl(\al\cup\z_j(\tilde\th)\bigr)=\bigl(\varsigma_B\circ\z_j(\Xi)_{P,\th}\bigr)(\al).\!\!
\end{align*}
The verification for $t(\Xi)$ is similar, using $t\backslash\ell_{P\t S^1}^*(\varsigma_P(\al))=\varsigma_P(t\backslash\ell_P^*(\al)).$
\end{proof}

\begin{prop}
Let\/ $\Xi^{*|*+2e}$ be an even-degree rational stable pushforward operation with\/ $\Xi^{2e}_\cla=\sum_{i\geqslant 0}\xi_i x_i.$ Setting\/ $\xi_{-1}=0,$ we have
\ea
 \label{zj-sequence}
 \z_j(\Xi_\cla^{2e}) &= \sum_{i\geqslant 0} \left(\sum_{k=0}^j \binom{i+k}{k}\xi_{i+k}\z_{j-k}\right) x_i,\\
 \label{shift-sequence}
 t(\Xi_\cla^{2e}) &= \sum_{i\geqslant 0} \xi_{i-1} x_i.
\ea
\end{prop}

\begin{proof}
By Corollary~\ref{Cor_EvenDegree_Many_Properties}(a) it suffices to verify these assertions for the trivial bundle $P,$ where the orientation is given by a class $\vartheta\in K(B)$ and $\tilde\th=V(1)\boxtimes\vartheta.$ Using $\z_j(\tilde\th)=\sum_{k\geqslant 0}\z_k(V(1))\t\z_{j-k}(\vartheta)=\sum_{k\geqslant 0}\frac{c_1^k}{k!}\t\z_{j-k}(\vartheta),$ we compute
 \begin{align*}
 &\z_j(\Xi)_{P,\th}(c_1^i/i!\t 1_{H^*(B)})\\
 &=\Xi_{P,\th}\bigl((c_1^i/i!\t 1_{H^*(B)})\cup \z_j(\tilde\th)\bigr)\\
 &=\Xi_{P,\th}\Bigl(\sum\nolimits_{k\geqslant 0}\bigl(c_1^i/i!\t 1_{H^*(B)}\bigr)\cup\bigl(c_1^k/k!\t\z_{j-k}\bigr)(\vartheta)\Bigr)\\
  &=\!\sum\nolimits_{k\geqslant0}\frac{1}{i!k!}\Xi_{P,\th}(c_1^{i+k}\t 1_{H^*(B)})\cup\z_{j-k}(\vartheta) && \text{by Cor.~\ref{Cor_EvenDegree_Many_Properties}(b)}\\
 &=\!\sum\nolimits_{k\geqslant0}\frac{(i+k)!}{i!k!}\xi_{i+k}(\vartheta)\cup\z_{j-k}(\vartheta)  && \text{by Cor.~\ref{Cor_EvenDegree_Many_Properties}(a).}
 \end{align*}
 This implies \eqref{zj-sequence}. The proof of \eqref{shift-sequence} is similar, using $t\diamond c_1^i=ic_1^{i-1}.$
\end{proof}

The commutator of \eqref{zj-sequence} with \eqref{shift-sequence} is
\e
\label{commutator}
 \z_j(t(\Xi^{2e}_\cla))-t(\z_j(\Xi^{2e}_\cla)) = \z_{j-1}(\Xi^{2e}_\cla).
\e

\begin{dfn}
\label{PiSModule}
Let $S$ be the graded abelian group $\Q\llbracket t\rrbracket\ot H^*(\F\t\{r\}),$ where $t$ is a power series variable of degree $-2,$ equipped with the product that commutes with infinite series and satisfies the relation $\z_j t-t\z_j=\z_{j-1},$ where $\z_0=r.$


By \eqref{commutator}, the two constructions in Definition~\ref{dfn37} make $\Pi_{\smash{\F\t\{r\}}}^*$ into a graded module over $S,$ where the action is given by the formulas \eqref{Rmod}.
\end{dfn}

\subsection{\texorpdfstring{Proof of Theorem~\ref{thmA}}{Proof of Theorem 1.4}}
\label{s42}

To determine the structure of $\Pi^*_{\F\t\{r\}}$ as a module over $S,$ we first introduce an auxiliary ring $R=\Q[z_0,z_1,z_2,\ldots]$ with a bigrading $|z_k|=(1,k)$ and derivation given by
\begin{align*}
 \6(z_k)&=z_{k-1}\enskip(\forall k\geqslant1),
 &\6(z_0)&=0.
\end{align*}
The ring $R$ and the variables $z_k$ are intended as abstractions of the ring $H^*(\F\t\{r\})$ and the classes $\z_k.$ Note that $\6$ is not a differential, as $\6\circ\6\neq0.$ The derivation property means that $\6(ab)=\6(a)b+a\6(b)$ and $\6(1)=0.$

Define a unital algebra homomorphism $\ep\colon R\to\Q$ by $\ep(z_k)=0$ for $k\geqslant0$ and the $\Q[z_0]$-linear map (since $\6$ is element-wise nilpotent, the infinite sum makes sense)
\[
 \ga=\sum\nolimits_{k\geqslant0}(-1)^kz_k\6^k.
\]
The motivation for introducing $\ga$ is that it encodes the action of $H^*(\G)$ on $\Pi^{2*}_{\smash{\F\t\{r\}}}$ and it is useful for constructing preimages under $\6.$

Clearly $\ep\circ\6=0$ on generators and $\6\circ\ga=0$ is also easy to check. Write $R^{d,e}\subset R$ for the vector subspace of elements of bidegree $(d,e).$ Then $\6$ and $\ga$ have bidegrees $(0,-1)$ and $(1,0).$ We have a chain complex (with $\Q[0,0]$ put in bidegree $(0,0)$)
\begin{equation}
\label{hard-sequence}
 \begin{tikzcd}
  R^{d-1,e}\arrow[r,"\ga"] & R^{d,e}\arrow[r,"\6"] & R^{d,e-1}\arrow[r,"\ep"] & \Q[0,0]^{d,e-1}\arrow[r] & 0.
  \end{tikzcd}
\end{equation}

\begin{lem}
\label{hard-lemma}
 For all\/ $(d,e)\neq(0,0)$ the sequence \eqref{hard-sequence} is exact.
\end{lem}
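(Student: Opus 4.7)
The plan is to reduce the lemma to a Poincar\'e-type statement for a translation vector field on symmetric polynomials by realizing $R$ geometrically. First I would identify the graded vector space $V=\Q\langle z_0,z_1,\ldots\rangle$ with $\Q[x]$ via $z_k\leftrightarrow x^k/k!$, so that $\partial|_V$ becomes $d/dx$. The symmetrization isomorphism $\mathrm{Sym}^d(V)\cong\Q[x_1,\ldots,x_d]^{S_d}$ then identifies $R^{d,e}$ with the degree-$e$ piece of $\Q[x_1,\ldots,x_d]^{S_d}$ and carries the derivation $\partial$ to the translation vector field $X_d=\sum_{i=1}^d\partial/\partial x_i$.

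The crucial computation is then a closed form for $\gamma$ in this model. The Sym-multiplication $z_k\cdot h$ with $h\in\mathrm{Sym}^{d-1}(V)$ symmetrizes to $\tfrac{1}{d}\sum_{j=1}^d\tfrac{x_j^k}{k!}h(x_1,\ldots,\widehat{x_j},\ldots,x_d)$, the factor $1/d$ reflecting that $h$ is already $S_{d-1}$-symmetric. Combining this with $\partial^k f$ and recognizing $\sum_k\tfrac{(-x_j)^k}{k!}X_{d-1}^k=\exp(-x_j X_{d-1})$ as the $(-x_j)$-translation, I would arrive at
\[
\gamma(f)(x_1,\ldots,x_d)\;=\;\frac{1}{d}\sum_{j=1}^d f\bigl(x_i-x_j : i\neq j\bigr).
\]
This makes $\gamma(f)$ manifestly translation-invariant and $S_d$-symmetric, giving $\mathrm{im}\,\gamma\subseteq\ker X_d$. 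Conversely, given an $S_d$-symmetric $g\in\ker X_d$, I would set $f(y_1,\ldots,y_{d-1}):=g(0,y_1,\ldots,y_{d-1})$, which is $S_{d-1}$-symmetric by the $S_d$-symmetry of $g$; for each $j$, translation-invariance of $g$ combined with $S_d$-symmetry gives $f(x_i-x_j : i\neq j)=g(x_1,\ldots,x_d)$, so averaging over $j$ yields $\gamma(f)=g$.

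For exactness at $R^{d,e-1}$ (with $d\geq 1$, $e\geq 1$), I would pass to coordinates $s=x_1+\cdots+x_d$ together with $S_d$-equivariant translation-invariant variables; then $X_d=d\,\partial/\partial s$, and writing $g=\sum_k s^k g_k$ the element $\sum_k\tfrac{s^{k+1}}{d(k+1)}g_k$ is a preimage, so $\partial$ is surjective. Since $\ker\epsilon=R^{d,e-1}$ whenever $(d,e-1)\neq(0,0)$, this handles all nontrivial cases, while the corner $(d,e)=(0,1)$ reduces directly to $0\to 0\to\Q\xrightarrow{\mathrm{id}}\Q\to 0$. The hard part will be the derivation of the closed form for $\gamma$: tracking the $1/d$ normalization in the Sym-multiplication and matching it against the operator expansion of $\gamma$ demands care, but once in hand, the geometric description of $\ker\partial$ as translation-invariant $S_d$-symmetric polynomials renders the remaining inclusions essentially formal.
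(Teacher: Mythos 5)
Your proof is correct and takes a genuinely different route from the paper. The paper proceeds by a double induction on $d+e$: it introduces a second shift map $\phi\colon R\to R,$ $\phi(z_k)=z_{k-1},$ $\phi(z_0)=0,$ sets up short exact sequences of chain complexes
\[
0\to R^{d-2,e}\xrightarrow{\,z_0\cdot\,}R^{d-1,e}\xrightarrow{\,\phi\,}R^{d-1,e-d}\to 0
\]
(and similarly in the other degrees), and then applies the snake lemma together with the inductive hypothesis on columns $1$ and $3$. Your argument instead realizes $R^{d,\bullet}$ explicitly as $\Q[x_1,\dots,x_d]^{S_d}$ via $z_k\leftrightarrow x^k/k!$, which turns $\partial$ into the translation field $X_d=\sum_i\partial/\partial x_i$, and — the key step — derives the closed formula $\gamma(f)=\frac1d\sum_{j=1}^d f(x_i-x_j:i\neq j)$ from the operator identity $\sum_k\frac{(-x_j)^k}{k!}X_{d-1}^k=\exp(-x_jX_{d-1})$. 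This makes $\ker\partial=\mathrm{im}\,\gamma$ transparent (both equal the translation-invariant $S_d$-symmetric polynomials, with the preimage of $g$ given explicitly by $g(0,y_1,\dots,y_{d-1})$), and surjectivity of $\partial$ onto $\ker\epsilon$ follows from the coordinate splitting $\Q[x_1,\dots,x_d]^{S_d}=\Q[s]\ot\Q[u_1,\dots,u_{d-1}]^{S_d}$ in which $X_d=d\,\partial/\partial s$. I checked the normalization $1/d$ in the symmetrized product and the boundary cases $(d,e)=(0,1)$ and $e=0$; they all come out right. What your approach buys is an explicit geometric description of $\ker\partial$ and of the map $\gamma$ (a twisted symmetrization) together with explicit preimages, rather than an existence statement via the snake lemma; what the paper's approach buys is brevity and the avoidance of any choice of polynomial model. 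Either proof could be substituted for the other with no change to the rest of \S\ref{s42}.
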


\begin{proof}
Observe that $R^{0,0}=\Q 1,$ $R^{0,e}=\{0\}$ for $e\neq0,$ $R^{1,e}=\Q z_e,$ and $R^{d,0}=\Q z_0^d.$ If $d=1$ and $e\geqslant 1,$ the exactness of \eqref{hard-sequence} reduces to the statement that $\partial\colon \Q z_e\to \Q z_{e-1}$ is an isomorphism, which holds by definition. The cases $(d,e)\in\{(1,0),(2,0)\}$ and $(d,e)=(0,e)$ for all $e\geqslant1$ are checked by direct inspection. Moreover, when $d<0$ or $e<0$ the sequence vanishes, so we restrict to $d,e\geqslant0.$

We proceed by induction on the total degree $n=d+e.$ We have already noted that the base case $n=2,$ namely $(d,e)\in\{(2,0),(1,1),(0,2)\},$ holds true. For the inductive step, we must prove that \eqref{hard-sequence} is exact for every bidegree $(d,e)$ with $n=d+e>2.$ By induction, \eqref{hard-sequence} is exact for all $d+e<n.$ Since we have already discussed the case $d=0,$ we may also assume $d>0.$ Define a surjective morphism of algebras $\phi\colon R\to R$ by $\phi(z_k)=z_{k-1}$ for $k\geqslant1$ and $\phi(z_0)=0.$ Then $\phi$ maps $R^{d,e}$ to $R^{d,e-d}.$ There is a commutative diagram (notice that we use the map $-\ga\circ\partial,$ not $\ga,$ in the third column)
\[
\begin{tikzcd}[row sep=3ex]
 0\ar[r]&R^{d-2,e}\arrow[r,"z_0"]\arrow[d,"\ga"]&R^{d-1,e}\arrow[d,"\ga"]\arrow[r,"\phi"] & R^{d-1,e-d+1}\arrow[d,"-\ga\circ\6"]\ar[r] & 0\\
 0\ar[r]&R^{d-1,e}\arrow[d,"\6"]\arrow[r,"z_0"]&R^{d,e}\arrow[d,"\6"]\arrow[r,"\phi"] & R^{d,e-d}\arrow[d,"\6"]\ar[r] & 0\\
 0\ar[r]&R^{d-1,e-1}\ar[d]\arrow[r,"z_0"]&R^{d,e-1}\ar[d,"\ep"]\arrow[r,"\phi"] & R^{d,e-d-1}\ar[d,"\ep"]\ar[r] & 0\\
0\ar[r] &0\ar[d]\ar[r]&\Q[0,0]^{d,e-1}\ar[d]\ar[r]&\Q[0,0]^{d,e-d-1}\ar[d]\ar[r]&0\\
&0&0&0&
\end{tikzcd}
\]
whose first three rows are exact because $\Ker\phi$ is the ideal generated by $z_0$ and whose forth row is exact because $(d,e-1)=(0,0)\iff (d,e-d-1)=(0,0).$ This short exact sequence of chain complexes thus induces a long exact sequence
\[
\begin{tikzcd}
\cdots\ar[r]&H_*(\text{Column 1})\ar[r]&H_*(\text{Column 2})\ar[r]&H_*(\text{Column 3})\ar[r]&\cdots.
\end{tikzcd}
\]
We will prove the exactness of Column 2 by showing that Columns 1 and 3 are exact. Column 1 is exact by induction, as $(d-1)+e<n$ and $(d-1,e)\neq(0,0).$ Also, $\Q[0,0]^{d-1,e-1}=\{0\}$ since $(d,e)\neq(1,1).$ For Column 3, first note that
\begin{equation}
\label{RetainExactness}
\begin{tikzcd}[column sep=4ex]
 R^{d-1,e-d}\rar{\ga}&R^{d,e-d}\rar{\6}&R^{d,e-d-1}\rar{\ep}&\Q[0,0]^{d,e-d-1}\rar&0
\end{tikzcd}
\end{equation}
is exact by induction, as $d+(e-d)=n-d<n$ and $(d,e-d)\neq(0,0).$ By induction we also have that $\6\colon R^{d-1,e-d+1}\to R^{d-1,e-d}$ is surjective since $(d-1)+(e-d+1)=n-d<n$ and $(d-1,e-d+1)\neq(0,0)$ and since for $(d,e)\neq(1,1)$ the cokernel is $\Q[0,0]^{d-1,e-d}=\{0\}.$ Therefore we may replace $\ga$ by $-\ga\circ\6$ in \eqref{RetainExactness} and retain exactness, leading to the required exactness of Column 3.
\end{proof}

The additional variable $z_0$ corresponds to the rank $r\in\Z.$ Define a surjective algebra morphism $\psi\colon R\to H^*(\F\t\{r\})$ by $\psi(z_0)=r$ and $\psi(z_k)=\z_k$ for $k\geqslant1.$ If we set $R^e=\bigoplus\nolimits_{d\geqslant0}R^{d,e},$ the map $\psi\colon R^e\to H^{2e}(\F\t\{r\})$ doubles the degree. The kernel of $\psi$ is the graded ideal $\an{z_0-r}.$ Let
\[
 \ga_r=r\cdot\id_{H^*(\F\t\{r\})}+\sum\nolimits_{k\geqslant1}(-1)^k\z_k\6^k.
\]
For $e>0$ we have the following commutative diagram with exact rows.
\begin{center}
\begin{tikzcd}[row sep=3ex]
0\ar[r]&\an{z_0-r}\cap R^e\ar[r]\ar[d,"\ga"]&R^e\ar[r,"\psi"]\ar[d,"\ga"]&H^{2e}(\F\t\{r\})\ar[r]\ar[d,"\ga_r"]&0\\
0\ar[r]&\an{z_0-r}\cap R^e\ar[r]\ar[d,"\6"] & R^e\ar[r,"\psi"]\ar[d,"\6"] & H^{2e}(\F\t\{r\})\ar[d,"\6"]\ar[r]&0\\
0\ar[r]&\an{z_0-r}\cap R^{e-1}\ar[r]\ar[d] & R^{e-1}\ar[r,"\psi"]\ar[d] & H^{2e-2}(\F\t\{r\})\ar[d]\ar[r]&0\\
&0&0&0&
\end{tikzcd}
\end{center}

The middle column is exact by Lemma~\ref{hard-lemma}, provided $e>1.$ Then the left column is also exact for $e>1.$ For example, if $0=\6((z_0-r)\cdot \la)=(z_0-r)\cdot (\6\la)$ for $\la\in R^e,$ then $\6\la=0$ and by the exactness of the middle column we can write $\la=\ga(\xi)$ for some $\xi\in R^e.$ Hence $\ga((z_0-r)\cdot \xi)=(z_0-r)\cdot \la.$ Similarly for the surjectivity of $\6.$ Hence the left column is exact and, as above, the long exact sequence in homology implies the exactness of the right vertical sequence. In case $e=1$ we note that $\6\colon H^2(\F\t\{r\})\to H^0(\F\t\{r\})$ from Definition~\ref{dfnPartial} is surjective for $r\neq 0$ and, if $r=0,$ has image the polynomials in $H^*(\F\t\{0\})$ with zero constant term. The case $e=0$ being trivial, we summarize our discussion as follows.

\begin{lem}
\label{better-lemma}
\hangindent\leftmargini
\textup{(a)}\hskip\labelsep
For\/ $r=0,$ there is an exact sequence
\[
\hskip3.5ex H^{2e}(\F\t\{0\})\overset{\ga_0}{\longra} H^{2e}(\F\t\{0\})\overset{\6}{\longra} H^{2e-2}(\F\t\{0\})\overset{\ep}{\longra}\Q[0]^{2e-2}\longra 0.	
\]
\begin{enumerate}
\setcounter{enumi}{1}
\item
For\/ $r\neq 0,$ there is an exact sequence
\[
H^{2e}(\F\t\{r\})\overset{\ga_r}{\longra} H^{2e}(\F\t\{r\})\\\overset{\6}{\longra} H^{2e-2}(\F\t\{r\})\longra 0.	
\]
\end{enumerate}
\end{lem}

We now compute the groups of stable pushforward operations $\Pi^*_{\smash{\F\t\{r\}}}$ and complete the proof of Theorem~\ref{thmA}. Recall that the groups $\Pi^*_{\smash{\F\t\{r\}}}$ are part of the exact sequence \eqref{s3eq7} and hence determined by the kernel and cokernel of the morphism $\de_r\colon\Ga_\F^{2*+2}\to\Ga_\F^{2*}$ defined in \eqref{s3eq4}, where $\Ga^{2*}_\F$ is the free module over $H^*(\F)$ on generators $x_i$ of degree $-2i$ for all $i\geqslant 0.$

\begin{prop}
\label{Lem_Pi_Odd}
\hangindent\leftmargini
\textup{(a)}\hskip\labelsep
If\/ $r=0,$ the cokernel of\/ $\de_0$ is one-dimensional and spanned by the image of\/ $x_0$ under the projection\/ $\Ga^0_\F\to\Pi^3_{\smash{\F\t\{0\}}}$ in \eqref{s3eq7}. Hence\/ $\Pi^\mathrm{odd}_{\smash{\F\t\{0\}}}=\Q\eta$ for\/ $\eta\in\Pi^3_{\smash{\F\t\{0\}}}.$
\begin{enumerate}
\setcounter{enumi}{1}
\item
If\/ $r\neq 0,$ then\/ $\de_r$ is onto. Therefore,\/ $\Pi^\mathrm{odd}_{\smash{\F\t\{r\}}}=0.$
\end{enumerate}
\end{prop}

\begin{proof}
An element of the codomain $\Ga^{2*}_\F$ can be written $\La=\sum_{i\geqslant0}\la_ix_i$ for $\la_i\in H^*(\F).$ By \eqref{s3eq4}, $\de_r\bigl(\sum_{i\geqslant0}\xi_ix_i\bigr)=\sum_{i\geqslant0}\la_ix_i$ is equivalent to
\ea
 \6\xi_0&=\la_0, &\6\xi_i&=\la_i-\xi_{i-1} \enskip (\forall i\geqslant 1).\label{DifferentialSystem}
\ea

\noindent
(a)\hskip\labelsep
If $r=0,$ then by Lemma~\ref{better-lemma}(a) the image of $\6$ is the set of all polynomials in $\Q[c_1,c_2,\ldots]$ with zero constant term. Hence the restriction of $\6$ to $H^2(\F\t\{0\})\to H^0(\F\t\{0\})$ has a one-dimensional cokernel. Suppose that for $i\geqslant 1$ we have already constructed $\xi_0,\ldots, \xi_{i-1}$ satisfying \eqref{DifferentialSystem}. Solving $\6 \xi_i=\la_i-\xi_{i-1}$ is only possible if $\ep(\la_i-\xi_{i-1})=0,$ but we can change $\xi_{i-1}$ to $\tilde \xi_{i-1}=\xi_{i-1}+\ep(\la_i-\xi_{i-1})$ and ensure $\ep(\la_i-\tilde \xi_{i-1})=0,$ while $\6\tilde \xi_{i-1}=\6 \xi_{i-1}$ shows that $\xi_0,\ldots, \xi_{i-2}, \tilde{\xi}_{i-1}$ still solves \eqref{DifferentialSystem}. Hence we can construct a preimage of $\La$ under $\de_0$ if and only if $\ep(\la_0)=0$ and the cokernel $\Pi^\mathrm{odd}_{\F\t\{0\}}$ of $\de_0$ is spanned by the image of $x_0\in\Ga^0_\F$ in $\Pi^3_{\F\t\{0\}}.$

The exact sequence \eqref{s3eq7} in Theorem~\ref{s3thm1} now implies that $\Pi^\mathrm{odd}_{\smash{\F\t\{0\}}}=\Q\eta,$ where $\eta$ is the image of $x_0$ under the final map in \eqref{s3eq7}.
\smallskip

\noindent
(b)\hskip\labelsep
If $r\neq 0,$ then $\6$ is surjective by Lemma~\ref{better-lemma}(b). Hence we can always solve $\6\xi_0=\la_0$ and, given $\xi_0,\ldots, \xi_{i-1},$ find a preimage $\xi_i$ satisfying \eqref{DifferentialSystem}. This recursion shows that every $\La\in\Ga^{2*}_\F$ has a preimage under $\de_r,$ which is therefore onto.
\end{proof}


Proposition~\ref{Lem_Pi_Odd} completes the computation of $\Pi^\mathrm{odd}_{\F\t\{r\}}.$ To compute the even part, recall from Definition~\ref{Dfn_PE} the projective Euler class $\Xi_\cla^{\PE}=\sum_{i\geqslant0}\frac{\y_{i+r+1}}{i!} x_i$ and the projective rank class $\Xi^{\rk}_\cla=\sum_{i\geqslant0}(-1)^i\z_i x_i.$

\begin{dfn}
For $s\in H^*(\F)\subset S$ let $s(\Xi^{\rk}_\cla)=\sum_{i\geqslant 0}\si_ix_i$ be the class of the pushforward operation obtained by the action of $s,$ where we use the $S$-module structure on $\Pi_{\smash{\F\t\{r\}}}^*$ from Definition~\textup{\ref{PiSModule}}. Write $s(\Xi^{\rk}_\cla)^{(i)}$ for the $i$\textsuperscript{th} term $\si_i.$	
\end{dfn}

\begin{lem}
\label{lem_motivate_ga}
We have\/ $s(\Xi^{\rk}_\cla)^{(i)}=\sum_{k\geqslant 0}(-1)^{i+k}\binom{i+k}{k}\z_{i+k}\6^k(s).$ In particular,\/ $s(\Xi^{\rk}_\cla)^{(0)}=\ga_r(s)$ is the constant term.
\end{lem}

\begin{proof}
It suffices to consider monomials $s=\z_{j_1}\cdots \z_{j_n}.$ Our proof proceeds by induction on $n.$ The base case $n=1$ follows from \eqref{zj-sequence} and $\6^k \z_{j_1}=\z_{j_1-k}.$ For the inductive step, recall that the general Leibniz rule for derivations states that
\e
\label{general-Leibniz}
 \6^m(\z_{j_1}\z_{j_2}\cdots\z_{j_n})=\sum_{k\geqslant0}\binom{m}{k}\6^k(\z_{j_1})\6^{m-k}(\z_{j_2}\cdots\z_{j_n}).
\e
Of course, the binomial coefficients $\binom{m}{k}$ are zero unless $0\leqslant k\leqslant m.$ Using the associativity of the action of $S$ on $\Pi^*_{\F\t\{r\}},$ we compute:
\begin{align*}
	&s(\Xi^{\rk}_\cla)^{(i)}=\bigl(\z_{j_1}(\z_{j_2}\cdots \z_{j_n}(\Xi^{\rk}_\cla))\bigr)^{(i)}\\
	&=\sum_{k\geqslant 0}\binom{i+k}{k} \z_{j_1-k}\cup(\z_{j_2}\cdots \z_{j_n})(\Xi^{\rk}_\cla)^{(i+k)}
	&& \text{by \eqref{zj-sequence}}\\
	&=\sum_{k\geqslant 0}\binom{i+k}{k} \z_{j_1-k}\\
	&\qquad\qquad\cup\sum_{\ell\geqslant 0}(-1)^{i+k+\ell}\binom{i+k+\ell}{\ell}\z_{i+k+\ell}\6^\ell(\z_{j_2}\cdots \z_{j_n}) &&\text{by induction}\\
	&=\sum_{m,k\geqslant 0} (-1)^{i+m}\binom{i+k}{k}\binom{i+m}{m-k}\z_{j_1-k}\z_{i+m}\6^{m-k}(\z_{j_2}\cdots \z_{j_n})
	&&\text{\parbox{2cm}{reindex\\$m=k+\ell$}}\\
	&=\sum_{m,k\geqslant 0} (-1)^{i+m}\binom{i+m}{i}\binom{m}{k}\z_{i+m}\z_{j_1-k}\6^{m-k}(\z_{j_2}\cdots \z_{j_n})
	&&\text{by \eqref{B2A}}
\end{align*}
\begin{align*}	
	&=\sum_{m\geqslant 0} (-1)^{i+m}\binom{i+m}{i}\z_{i+m}\6^m(\z_{j_1}\cdots\z_{j_n}) &&\text{by \eqref{general-Leibniz}.}
\end{align*}
This completes the proof of the inductive step $n-1\mapsto n.$
\end{proof}

\begin{lem}
\label{aux-lem-0}
Let\/ $r\neq 0.$ For all\/ $\Xi^{2e}_\cla\in\Pi^{2e}_{\smash{\F\t\{r\}}}$ there exist\/ $s\in H^{2e}(\F)\subset S$ and\/ $\La^{2e+2}_\cla\in\Pi^{2e+2}_{\smash{\F\t\{r\}}}$ such that\/ $\Xi^{2e}_\cla=s(\Xi^{\rk}_\cla)+t(\La^{2e+2}_\cla).$
\end{lem}

\begin{proof}
Let $\Xi^{2e}_\cla=\sum_{i\geqslant 0}\xi_ix_i.$ Since $\6\xi_0=0$ by \eqref{sequence}, Lemma~\ref{better-lemma}(b) implies that there exists $s\in H^{2e}(\F)$ with $\ga_r(s)=\xi_0.$ Now Lemma~\ref{lem_motivate_ga} implies that $\Xi^{2e}_\cla-s(\Xi^{\rk}_\cla)$ has vanishing constant term and by \eqref{shift-sequence} can therefore be written as $t(\La^{2e+2}_\cla)$ for some $\La^{2e+2}_\cla\in\Pi^{2e+2}_{\F\t\{r\}}$.
\end{proof}

\begin{lem}
\label{aux-lem-1}
 Let\/ $r\in\Z.$ For all\/ $\Xi^{2e}_\cla\in\Pi^{2e}_{\smash{\F\t\{r\}}}$ with\/ $e\neq0$ there exist\/ $j\geqslant0,$ $\Th^{2e-2j}_\cla\in\Pi^{2e-2j}_{\smash{\F\t\{r\}}},$ and\/ $\La^{2e+2}_\cla\in\Pi^{2e+2}_{\smash{\F\t\{r\}}}$ such that\/ $\Xi^{2e}_\cla=\ch_j(\Th^{2e-2j}_\cla)+t(\La^{2e+2}_\cla).$
\end{lem}

\begin{proof}
Let $\Xi^{2e}_\cla=\sum_{i\geqslant 0}\xi_ix_i.$
If $\xi_0=0,$ then $\Xi^{2e}_\cla=t(\La^{2e+2}_\cla)$ for some $\La_\cla^{2e+2}$ and we are done. If $\xi_0\neq0,$ we must have $e>0,$ since $\xi_0\in H^{2e}(\F).$ As $\6 \xi_0=0,$ we may apply Lemma~\ref{better-lemma} to $\xi_0$ and find $\tilde\om\in H^{2e}(\F)$ such that $\xi_0=\ga_r(\tilde\om).$ Since $\partial$ is nilpotent, we can expand this equation as
\[
 \xi_0=\ch_0\tilde\om-\ch_1\6\tilde\om+\ch_2\6^2\tilde\om-\ldots+(-1)^j\ch_j\6^j\tilde\om
\]
for some $j\geqslant0.$ Define $\om_j=(-1)^j\tilde\om,$ $\om_{j-1}=\6\om_j,$ $\ldots,$ $\om_0=\6^j\om_j.$ As $\om_j$ has degree $2e>0,$ we have $\ep(\om_j)=0.$ Hence Lemma~\ref{better-lemma} allows us to pick $\om_{j+1}\in H^{2e+2}(\F)$ with $\6\om_{j+1}=\om_j.$ Continuing in this way, we obtain $\om_{j+2}, \om_{j+3}, \ldots$ such that $\Th^{2e-2j}_\cla=\sum_{i\geqslant0}(-1)^i\om_ix_i\in\Pi^{2e-2j}_{\smash{\F\t\{r\}}}$ is in the kernel of $\de_r.$ Moreover, by \eqref{zj-sequence} the constant term of $\ch_j(\Th^{2e-2j}_\cla)$ is $\om_0\ch_j-\om_1\ch_{j-1}+\ldots+(-1)^j\om_j\ch_0=\xi_0.$ Hence $\Xi^{2e}_\cla-\ch_j(\Th^{2e-2j}_\cla)$ has zero constant term and can thus be written as $t(\La^{2e+2}_\cla).$
\end{proof}

\begin{lem}
\label{aux-lem-2}
Let\/ $r=0.$ For all\/ $\Xi^{2e}_\cla\in\Pi^{2e}_{\smash{\F\t\{0\}}}$ there exist\/ $s\in S$ and\/ $\La^{2e+2}_\cla\in\Pi^{2e+2}_{\smash{\F\t\{0\}}}$ such that\/ $\Xi^{2e}_\cla=s(\Xi^{\PE}_\cla)+t(\La^{2e+2}_\cla).$
\end{lem}

\begin{proof}
Let $\Xi^{2e}_\cla=\sum_{i\geqslant 0}\xi_ix_i.$
If $e<0,$ then $\xi_0=0$ and we can write $\Xi^{2e}_\cla=t(\La^{2e+2}_\cla)$ and put $s=0.$ If $e=0,$ then $\xi_0$ is a constant, but the existence of $\xi_1$ with $-\6\xi_1=\xi_0$ from \eqref{sequence} and Lemma~\ref{better-lemma}(a) implies that the constant must be zero. Hence we again have $\Xi^{2e}_\cla=t(\La^{2e+2}_\cla)$ and put $s=0.$ For $e\geqslant 1$ we proceed by induction on $e.$

In the base case $e=1$ we have $H^2(\F)=\Q c_1,$ so we can write $\xi_0=s c_1$ with $s\in\Q.$ Then $\Xi^{2e}_\cla-s\,\Xi_\cla^{\PE}$ has vanishing constant term, hence $\Xi^{2e}_\cla=s\,\Xi^{\PE}_\cla+t(\La^{2e+2}_\cla)$ for some $\La^{2e+2}_\cla.$
For the inductive step $e-1\mapsto e,$ use Lemma~\ref{aux-lem-1} to write
\e
\label{eq-1-aux-lem-2}
 \Xi^{2e}_\cla=\ch_j(\Th^{2e-2j}_\cla)+t(\La^{2e+2}_\cla)
\e
for some $j\geqslant 0,$ $\Th^{2e-2j}_\cla\in\Pi^{2e-2j}_{\smash{\F\t\{0\}}},$ and $\La^{2e+2}_\cla\in\Pi^{2e+2}_{\smash{\F\t\{0\}}}.$ We will show by reverse induction  on $k=j,j-1,\ldots,0$ that there exist $s_k\in S,$ $\Th_k\in\Pi^{2e-2k}_{\smash{\F\t\{0\}}},$ and $\La_k\in\Pi^{2e+2}_{\smash{\F\t\{0\}}}$ such that
\e
\label{eq-2-aux-lem-2}
 \Xi^{2e}_\cla=s_k(\Xi^{\PE}_\cla)+\ch_k(\Th_k)+t(\La_k).
\e
For $k=0$ we will then have $\ch_k=r=0,$ so \eqref{eq-2-aux-lem-2} completes the inductive step.

As to the induction over $k,$ in the base case $k=j$ we set $s_j=0,$ $\Th_j=\Th^{2e-2j}_\cla,$ and $\La_j=\La^{2e+2}_\cla$ using the classes from \eqref{eq-1-aux-lem-2}. For the inductive step $k\mapsto k-1$ with $k-1\geqslant 0,$ we begin with $s_k, \Th_k, \La_k$ satisfying \eqref{eq-2-aux-lem-2}. Since $\Th_k$ has degree $2e-2k<2e,$ we apply the inductive hypothesis (for the outer induction on $e$) to $\Th_k$ and find $\tilde s\in S$ and $\tilde\La\in \Pi^{2e-2k+2}_{\smash{\F\t\{0\}}}$ such that $\Th_k=\tilde{s}(\Xi^{\PE}_\cla)+t(\tilde\La).$ Hence
\begin{align*}
 \Xi^{2e}_\cla&=s_k(\Xi^{\PE}_\cla)+\ch_k(\tilde{s}(\Xi^{\PE}_\cla)+t(\tilde\La))+t(\La_k) && \text{by \eqref{eq-2-aux-lem-2}}\\
 &=(s_k+\ch_k\tilde s)(\Xi^{\PE}_\cla)+t(\ch_k(\tilde\La)+\La_k)+\ch_{k-1}(\tilde\La) &&\text{by \eqref{commutator}.}
\end{align*}
Setting $s_{k-1}=s_k+\ch_k\tilde s,$ $\Th_{k-1}=\tilde\La,$ and $\La_{k-1}=\ch_k(\tilde\La)+\La_k$ completes the inductive step $k\mapsto k-1$ and also concludes the inductive step $e-1\mapsto e.$
\end{proof}


Proposition~\ref{Lem_Pi_Odd}(a) shows that $\Pi^\mathrm{odd}_{\smash{\F\t\{0\}}}=\Q\eta$ if $r=0$ and Proposition~\ref{Lem_Pi_Odd}(b) shows that $\Pi^\mathrm{odd}_{\smash{\F\t\{0\}}}=\{0\}$ if $r\neq 0.$ To complete the proof of Theorem~\ref{thmA}, it therefore remains only to prove the following.

\begin{prop}
\hangindent\leftmargini
\textup{(a)}\hskip\labelsep
If\/ $r=0,$\/ $\Pi^\mathrm{ev}_{\smash{\F\t\{0\}}}$ is generated as an\/ $S$-module by\/ $\Xi^{\PE}_\cla.$
\begin{enumerate}
\setcounter{enumi}{1}
\item
If\/ $r\neq 0,$\/ $\Pi^\mathrm{ev}_{\smash{\F\t\{r\}}}$ is generated as an\/ $S$-module by\/ $\Xi^{\rk}_\cla.$
\end{enumerate}
\end{prop}

\begin{proof}
The proofs of (a) and (b) are the same, except that one uses the projective Euler class and Lemma~\ref{aux-lem-2} for (a) and the projective rank class and Lemma~\ref{aux-lem-0} for (b). We only spell out the proof of (a). Let $\Xi^{2e}_\cla\in\Pi^{2e}_{\smash{\F\t\{0\}}}$ and write $\Xi^{(0)}=\Xi^{2e}_\cla.$ By applying Lemma~\ref{aux-lem-2} recursively, for all $n\geqslant 0$ there exist $s_n\in S$ and $\Xi^{(n)}\in \Pi^{2e+2n}_{\smash{\F\t\{0\}}}$ such that $\Xi^{(n)}=s_n(\Xi^{\PE}_\cla)+t(\Xi^{(n+1)}).$ Define $s=\sum_{n\geqslant 0} t^n s_n\in S.$ Then $\Xi^{2e}_\cla=s(\Xi^{\PE}_\cla),$ since for every $n$ the first $n$ terms of $\Xi^{2e}_\cla$ and $s(\Xi^{\PE}_\cla)$ agree:
\begin{align*}
 \Xi^{2e}_\cla&=s_0(\Xi^{\PE}_\cla)+t(\Xi^{(1)})=(s_0+ts_1)(\Xi^{\PE}_\cla)+t^2(\Xi^{(2)})\\
  &=\cdots=(s_0+ts_1+\ldots+t^ns_n)(\Xi^{\PE}_\cla)+t^{n+1}(\Xi^{(n+1)}).\qedhere
\end{align*}
\end{proof}

\subsection{\texorpdfstring{Proof of Theorem~\ref{s1thm1}(g)--(i)}{Proof of Theorem 1.1(g)--(i)}}
\label{s43}

We have already checked Theorem~\ref{s1thm1}(a)--(f) in \S\ref{sGenSection} for the projective Euler operation $\Xi_\cla^{\PE}=\sum_{i\geqslant0}\frac{\y_{i+r+1}}{i!} x_i\in\Pi^{2r+2}_{\F\t\{r\}}$ from Definition~\ref{Dfn_PE}. It remains to prove Parts (g)--(i).
\smallskip

\noindent
(g)\hskip\labelsep
By Corollary~\ref{Cor_EvenDegree_Many_Properties}(a), it suffices to treat the case of a trivial principal $\G$-bundle with a global section $s\colon B\to P,$ where the orientation amounts to an ordinary K-theory class $s^*(\tilde\th)\in K(B)$ which, for simplicity, we write in this section without the tilde as $s^*(\th).$ Recall the map $\ga_{P,s}\colon P\to \G$ defined by $\ga_{P,s}(gs(b))=g$ for all $b\in B$ and $g\in \G.$ Recall from Corollary~\ref{Cor_EvenDegree_Many_Properties} and Theorem~\ref{s1thm1}(a) that the projective Euler operation is normalized by
\[
 \pi_!^\th(\ga_{P,s}^*(c_1^i))=c_{i+r+1}(s^*\th).
\]

 For the dual bundle $\breve\pi\colon\breve{P}\to B$ the action is inverted and using the same section $\breve{s}\coloneqq s,$ we have $\ga_{\breve{P},\breve{s}}=u\circ\ga_{P,s}$ for the inversion $u\colon \G\to \G.$ For the dual orientation $\breve\th$ we have $c_{i+r+1}(\breve{s}^*(\breve\th))=(-1)^{i+r+1}c_{i+r+1}(s^*\th).$ Also, $\ga_{\breve{P},\breve{s}}^*(c_1^i)=\ga_{P,s}^*(u^*c_1^i)=(-1)^i\ga_{P,s}^*(c_1^i)$ and therefore
\begin{align*}
 \breve\pi^{\breve\th}_!\bigl(\ga_{P,s}^*(c_1^i)\bigr)&=(-1)^i \breve\pi^{\breve\th}_!\bigl(\ga_{\breve{P},\breve{s}}^*(c_1^i)\bigr)=(-1)^ic_{i+r+1}\bigl(\breve{s}^*(\breve\th)\bigr)\\
 &=(-1)^{r+1} c_{i+r+1}(s^*\th)=(-1)^{r+1}\pi^\th_!(\ga_{P,s}^*(c_1^i)).
\end{align*}

\noindent
(h)\hskip\labelsep
Just as in Corollary~\ref{Cor_EvenDegree_Many_Properties}(a), a rational stable pushforward operation of even degree for principal $(\G\t \G)$-bundles is also uniquely determined by its values on trivial bundles (the key point is that $\G\t\G$ has no odd cohomology). Let $\pi_1\colon P_1\to B$ and $\pi_2\colon P_2\to B$ be principal $\G$-bundles with sections $s_1\colon B\to P_1$ and $s_2\colon B\to P_2.$ These determine further sections
\begin{align*}
B&\overset{s_3}{\longra} P_1\ot_\G P_2=P_3,	&b&\longmapsto s_1(b)\ot_\G s_2(b),\\
P_3&\overset{\si_3}{\longra} P_1\t_B P_2,	&gs_1(b)\ot_\G s_2(b)&\longmapsto(gs_1(b),s_2(b)),\\
P_1&\overset{\si_1}{\longra} \pi_1^*(P_2),	&p_1&\longmapsto(p_1,s_2(\pi_1(p_1))),\\
P_2&\overset{\si_2}{\longra} \pi_2^*(P_1),	&p_2&\longmapsto(s_1(\pi_2(p_2)),p_2),
\end{align*}
which fit into a commutative diagram
\[
 \begin{tikzcd}[column sep=30ex]
  \mathrlap{\pi_2^*(P_1)\cong P_1\t_B P_2\cong\pi_1^*(P_2)}\phantom{\pi_2^*(P_1)}
  \ar[dd,"\ka_2"]
  \ar[rd,start anchor={[xshift=1.2cm,yshift=-0.1cm]},end anchor={[xshift=-1.7cm,yshift=0.25cm]},"\ka_3"]
  \ar[rr,start anchor={[xshift=3.35cm,yshift=0cm]},"\ka_1"]
  &
  & P_1
  \ar[ll,bend left=10,dashed,start anchor={[xshift=-0.25cm,yshift=-0.1cm]}, end anchor={[xshift=3.35cm,yshift=0cm]},shorten >=1ex,"\si_1"]
  \ar[dd,"\pi_1"' near start]\\
  & \hspace{-0.4cm}\mathclap{P_3=P_1\ot_\G P_2}\ar[rd,shorten <=5ex,"\pi_3"]\ar[lu,bend left=15,start anchor={[xshift=-1.75cm,yshift=0.0cm]},end anchor={[xshift=0.85cm,yshift=0cm]},dashed,"\si_3"] &\\
  P_2\ar[rr,"\pi_2"']\ar[uu,dashed,bend left=30,shorten <=0.5ex,"\si_2"] && B\mathrlap{.}\ar[ll,bend right=8,shorten <=1ex,dashed,"s_2"']\ar[uu,bend right,dashed,"s_1"']\ar[lu,dashed,bend right=15,end anchor={[xshift=1.25cm]},"s_3"']
 \end{tikzcd}
\]
Define also $\ga_{(s_1,s_2)}\colon P_1\t_B P_2\to\G\t\G,$ $(g_1s_1,g_2s_2)\mapsto(g_1,g_2).$ Then $H^*(P_1\t_B P_2)$ is generated as an $H^*(B)$-module by $\ga_{(s_1,s_2)}^*(c_1^i\t c_1^j)$ and it suffices to check \eqref{eqn:composition} on these classes. We calculate the first term in \eqref{eqn:composition}. Using $\ga_{(s_1,s_2)}={(\ga_{\pi_2^*(P_1),\si_2},\ga_{P_2,s_2}\circ\ka_2)},$ we have $\ga_{(s_1,s_2)}^*(c_1^i\t c_1^j)=\ga_{\pi_2^*(P_1),\si_2}^*(c_1^i)\cup\ka_2^*(\ga_{P_2,s_2}^*(c_1^j)),$ so by using linearity over $\ka_2^*,$ which holds by Theorem~\ref{s1thm1}(e), and the normalization of the projective Euler operation in Theorem~\ref{s1thm1}(c), we get
\begingroup
\let\oldcup\cup
\renewcommand\cup{}
\e
\label{cik13}
(\ka_2)^{\ka_1^*\th_1+\ka_3^*\th_3}_!\bigl(\ga_{(s_1,s_2)}^*(c_1^i\t c_1^j)\bigr)=\ga_{P_2,s_2}^*(c_1^j)\cup
 c_{i+r_1+r_3+1}\bigl(\si_2^*\ka_1^*\th_1+\si_2^*\ka_3^*\th_3\bigr).
\e
(Here and in the following calculations, we omit the cup product symbol.)
Since
\begin{align*}
 &c_{i+r_1+r_3+1}\bigl(\si_2^*\ka_1^*\th_1+\si_2^*\ka_3^*\th_3\bigr)=\enskip\sum_
  {\scriptstyle\mathclap{\substack{a+q=\\i+r_1+r_3+1}}}\enskip c_a(\si_2^*\ka_1^*\th_1)\cup c_q(\si_2^*\ka_3^*\th_3)&&\text{\parbox[t]{3cm}{by Whitney sum\\\phantom{by }formula}}\\
 &=\enskip\sum_{\scriptstyle\mathclap{\substack{a+q=\\i+r_1+r_3+1}}}\enskip \pi_2^*c_a(s_1^*\th_1)\oldcup(\ga_{P_2,s_2},s_3\pi_2)^*\ell_{P_3}^*c_q(\th_3)&&
 \begin{aligned}
\text{by }&\ka_1 \si_2=s_1\pi_2\\
&\hskip-10ex\ka_3\si_2=\ell_{P_3}(\ga_{P_2,s_2},s_3\pi_2)
 \end{aligned}\\
 &
 =\enskip\sum_{\scriptstyle\mathclap{\substack{a+q=\\i+r_1+r_3+1}}}\enskip \pi_2^*c_a(s_1^*\th_1)
 \cup\sum_{b+c=q}\binom{r_3-c}{b}\ga_{P_2,s_2}^*(c_1^b)\cup\pi_2^*c_c(s_3^*\th_3)
 &&\text{by \eqref{PullbackUnderlyingK},}
\end{align*}
we can rewrite \eqref{cik13} as $\sum_{\scriptstyle{\substack{a+b+c=\\i+r_1+r_3+1}}}\binom{r_3-c}{b}\ga_{P_2,s_2}^*(c_1^{j+b})\cup\pi_2^*c_a(s_1^*\th_1)\cup\pi_2^*c_c(s_3^*\th_3).$
Now apply $(\pi_2)^{\th_2}_!$ and use linearity over $\pi_2^*$ and Corollary~\ref{Cor_EvenDegree_Many_Properties}(a) to get
\begin{align}
\label{erster-streich}
    &(\pi_2)^{\th_2}_!(\ka_2)^{\ka_1^*\th_1+\ka_3^*\th_3}_!\bigl(\ga_{(s_1,s_2)}^*(c_1^i\t c_1^j)\bigr)\\[1ex]
    & \hskip-10ex\mathmakebox[\displaywidth][r]{\begin{aligned}
&=\quad\sum_{\scriptstyle\mathclap{\substack{a+b+c=\\i+r_1+r_3+1}}}\quad\binom{r_3-c}{b}c_a(s_1^*\th_1)\cup c_{j+b+r_2+1}(s_2^*\th_2)\cup c_c(s_3^*\th_3)\\
 &=\quad\sum_{\scriptstyle\mathclap{\substack{a+b+c=\\i+j+r_1+r_2+r_3+2}}}\quad\binom{r_3-c}{b-j-r_2-1}c_a(s_1^*\th_1)\cup c_b(s_2^*\th_2)\cup c_c(s_3^*\th_3),
\end{aligned}}\notag
\end{align}
where in the last step we subtract $j+r_2+1$ from the index $b.$
Symmetrically, the substitution $(1,2,i,j,a,b)\mapsto(2,1,j,i,b,a)$ yields
\begin{align}
\label{zweiter-streich}
    &(\pi_1)^{\th_1}_!(\ka_1)^{\ka_2^*\th_2+\ka_3^*\th_3}_!\bigl(\ga_{(s_1,s_2)}^*(c_1^i\t c_1^j)\bigr)\\[1ex]
    &\mathmakebox[\displaywidth][r]{\begin{aligned}
&=\quad\sum_{\scriptstyle\mathclap{\substack{a+b+c=\\i+j+r_1+r_2+r_3+2}}}\quad\binom{r_3-c}{a-i-r_1-1}c_a(s_1^*\th_1)\cup c_b(s_2^*\th_2)\cup c_c(s_3^*\th_3),
\end{aligned}}\notag
\end{align}
where on the left hand side we have $c_1^i\t c_1^j$ rather than $c_1^j\t c_1^i$ since exchanging $(1,2)\mapsto(2,1)$ leads to a pushforward operation defined on $H^*(P_2\t_B P_1),$ which differs from $(\pi_1)^{\th_1}_!(\ka_1)^{\ka_2^*\th_2+\ka_3^*\th_3}_!$ by precomposition with the swap of factors.

To compute the middle route through \eqref{composition}, it is useful to define $\widehat{\mathrm{m}}_\G\colon \G\t \G\to \G,$ $\widehat{\mathrm{m}}_\G(g,h)=gh^{-1}.$
By the convention for the action $\ell_{P_1\t_B P_2}$ from Page~\pageref{ConventionFPaction}, we have $\ga_{(s_1,s_2)}=\bigl(\widehat{\mathrm{m}}_\G\circ(\ga_{P_3,s_3}\circ\ka_3,\ga_{P_1\t_B P_2,\si_3}),\ga_{P_1\t_B P_2,\si_3}\bigr).$ Combined with $\widehat{\mathrm{m}}_\G^*(c_1^i)=(c_1\t1-1\t c_1)^i,$ the Binomial Theorem, and \eqref{B0}, we then find 
\begin{align*}
 \ga_{(s_1,s_2)}^*(c_1^i\t c_1^j)&=\sum_{m=0}^i\;(-1)^{i-m}\binom{i}{m}\ka_3^*\ga_{P_3,s_3}^*(c_1^m)\cup\ga_{P_1\t_B P_2,\si_3}^*(c_1^{i-m})\cup\ga_{P_1\t_B P_2,\si_3}^*(c_1^j).
\end{align*}
Set $n=i+j-m.$ Using linearity over $\ka_3^*,$ the image under $(\ka_3)^{\ka_1^*\breve\th_1+\ka_2^*\th_2}_!$ is
\[
 \sum_{\mathclap{n+m=i+j}}\quad(-1)^{i-m}\binom{i}{m} \ga_{P_3,s_3}^*(c_1^m)\cup c_{n+r_1+r_2+1}(\si_3^*\ka_1^*\breve\th_1+\si_3^*\ka_2^*\th_2).
\]
Inserting into this the calculation (where $\breve s_3$ denotes $s_3$ regarded as a section of the dual bundle $\breve P_3\to B$)
\begin{align*}
 &c_{n+r_1+r_2+1}(\si_3^*\ka_1^*\breve\th_1+\si_3^*\ka_2^*\th_2)\\[2ex]
 &=\quad\sum_{\scriptstyle\mathclap{\substack{p+b=\\n+r_1+r_2+1}}}\quad
c_p(\si_3^*\ka_1^*\breve\th_1)\cup c_b(\si_3^*\ka_2^*\th_2) && \hskip-20ex\text{by Whitney sum formula}\\[-2ex]
 &=\quad\sum_{\scriptstyle\mathclap{\substack{p+b=\\n+r_1+r_2+1}}}\quad (\ga_{\breve P_3,\breve s_3},s_1\pi_3)^*\ell_{\breve P_1}^*c_p(\breve\th_1)\cup\pi_3^*c_b(s_2^*\th_2)
 &&
 \hskip-20ex\begin{aligned}
 \text{by }&\ka_2\si_3=s_2\pi_3,\\
 &\ka_1\si_3=\ell_{\breve P_1}(\ga_{\breve P_3,\breve s_3},s_1\pi_3)
 \end{aligned}\\
 &=\quad\sum_{\scriptstyle\mathclap{\substack{p+b=\\n+r_1+r_2+1}}}\quad\enskip
\sum_{\mathclap{k+a=p}}\binom{r_1-a}{k}(-1)^k\ga_{P_3,s_3}^*(c_1^k)\cup\pi_3^*c_a(s_1^*\breve\th_1)\cup\pi_3^*c_b(s_2^*\th_2),
 \end{align*}
where at the last step we used \eqref{PullbackUnderlyingK}, $\ga_{\breve P_3,\breve s_3}=u\circ\ga_{P_3,s_3},$ and $u^*(c_1^k)=(-1)^kc_1^k$, we obtain
\[
 \sum_{\scriptstyle\mathclap{\substack{a+b+m+k=\\i+j+r_1+r_2+1}}}\quad(-1)^{i-m+k}\binom{i}{m}\binom{r_1-a}{k}\ga_{P_3,s_3}^*(c_1^{m+k})\cup\pi_3^*c_a(s_1^*\breve\th_1)\cup\pi_3^*c_b(s_2^*\th_2).
\]
Now reindex $m$ by $q=m+k,$ use the formula $\sum_k\binom{i}{q-k}\binom{n}{k}=\binom{n+i}{q}$ for binomial coefficients to eliminate the sum over $k,$ and use $c_a(s_1^*\breve\th_1)=(-1)^ac_a(s_1^*\th_1)$ to get
\[
 \sum_{\scriptstyle\mathclap{\substack{a+b+q=\\i+j+r_1+r_2+1}}}\quad (-1)^{i-q-a} \binom{r_1-a+i}{q}\pi_3^*c_a(s_1^*\th_1)\cup\pi_3^*c_b(s_2^*\th_2)\cup\ga_{P_3,s_3}^*(c_1^q).
\]
Applying the map $(\pi_3)^{\th_3}_!,$ we then find
\begin{align}
\label{dritter-streich}
    &(\pi_3)^{\th_3}_!(\ka_3)^{\ka_1^*\breve\th_1+\ka_2^*\th_2}_!\bigl(\ga_{(s_1,s_2)}^*(c_1^i\t c_1^j)\bigr)\\[1ex]
    & \hskip-5ex\mathmakebox[\displaywidth][r]{\begin{aligned}
 &=\quad\sum_{\scriptstyle\mathclap{\substack{a+b+q=\\i+j+r_1+r_2+1}}}\quad (-1)^{i-q-a}\binom{r_1-a+i}{q}c_a(s_1^*\th_1)\cup c_b(s_2^*\th_2)\cup c_{q+r_3+1}(s_3^*\th_3)\\
 &=\quad\sum_{\scriptstyle\mathclap{\substack{a+b+c=\\i+j+r_1+r_2+r_3+2}}}\quad(-1)^{b-j-r_1-r_2-1}\binom{r_1-a+i}{c-r_3-1}c_a(s_1^*\th_1)\cup c_b(s_2^*\th_2)\cup c_c(s_3^*\th_3),
\end{aligned}}\notag
\end{align}
where in the last step we have reindexed by $c=q+r_3+1$ and used the equation for the summation to rewrite the sign. The composition property \eqref{eqn:composition} now follows by comparing \eqref{erster-streich}, the negative of \eqref{zweiter-streich}, and \eqref{dritter-streich}, while noticing that
\[
 \binom{r_3-c}{b-j-r_2-1}
 -\binom{r_3-c}{a-i-r_1-1}
 =(-1)^{b-j-r_2-1}\binom{r_1-a+i}{c-r_3-1}
\]
for each term $(a,b,c)$ with $a+b+c=i+j+r_1+r_2+r_3+2$ in the sum. Indeed, putting $n=r_3-c$ and $k=b-j-r_2-1$ we can rewrite this as the identity $\binom{n}{k}-\binom{n}{n-k}=(-1)^k\binom{k-n-1}{-n-1},$ which is proven in the appendix as \eqref{B4}. This completes the proof of the composition property Theorem~\ref{s1thm1}(h).
\smallskip
\endgroup

%
%
%

\noindent
(i)\hskip\labelsep If $r\neq 0,$ $\eta_P=0$ and the claim is obvious. If $r=0,$ this follows from Theorem~\ref{thmA}, as $\al\mapsto \pi^\th_!(\al)\cup\eta_P$ is a stable pushforward operation in $\Pi_{\smash{\F\t\{0\}}}^5=\{0\}.$\qed

\subsection{Projective Euler operation in homology}
\label{Ssec_Homol_PE}

\begin{thm}
\label{thmHomologEuler}
For each principal\/ $\G$-bundle\/ ${\pi\colon P\to B}$ and orientation\/ $\th\in K_P(B)$ in twisted complex K-theory of rank\/ $r,$ there is a \textbf{projective Euler operation}\/ $\pi_\th^!\colon H_*(B;\Q)\to H_{*-2r-2}(P;\Q),$ uniquely characterized by\/ $\an{\al,\pi_\th^!(v)}=\an{\pi^\th_!(\al),v}$ for all\/ $\al\in H^{*-2r-2}(P;\Q)$ and\/ $v\in H_*(B;\Q).$

The preduals of the properties stated in Theorem~\textup{\ref{s1thm1}} hold for\/ $\pi^!_\th.$ In particular, for every pullback diagram \eqref{pullback-diagram} and\/ $\th_1=\Phi^*(\th_2),$ we have a commutative square
\begin{equation}
\label{HomolNaturality}
\begin{tikzcd}
H_{*-2r-2}(P_1;\Q)\arrow[r,"\Phi_*"] & H_{*-2r-2}(P_2;\Q)\\
H_*(B_1;\Q)\arrow[u,"(\pi_1)_{\th_1}^!"]\arrow[r,"\phi_*"'] & H_*(B_2;\Q).\arrow[u,"(\pi_2)_{\th_2}^!"]
\end{tikzcd}	
\end{equation}
Moreover, the homological version of Theorem~\textup{\ref{s1thm1}(g)} states that
\e
\label{HomolDualAxiom}
 \breve\pi^!_{\breve\th}=(-1)^{r+1}\pi^!_\th.
\e
\item
Moreover, in the situation of Theorem~\textup{\ref{s1thm1}\ref{thmB-f}}, we have
\e
\label{HomolComposition}
 (\ka_2)^!_{\ka_1^*\th_1+\ka_3^*\th_3}\circ(\pi_2)^!_{\th_2}-(\ka_1)^!_{\ka_2^*\th_2+\ka_3^*\th_3}\circ(\pi_1)_{\th_1}^!=(-1)^{r_1}(\ka_3)^!_{\ka_1^*\breve\th_1+\ka_2^*\th_2}\circ(\pi_3)^!_{\th_3}.
\e
\end{thm}

\begin{proof}
Since we take coefficients in $\Q,$ the Universal Coefficient Theorem implies that the cohomology groups are dual to the homology groups, but to avoid any dimensional constraints on the homology groups, we will identify the homological pushforward operations as a predual (rather than a dual) of the cohomological projective Euler operation. Once preduality is established, the claimed properties follow automatically. To prove the existence of a predual of the projective Euler operation, we apply Lemma~\ref{lemExPredual} with $V=H_n(B;\Q),$ $W=H_{n-2r-2}(P;\Q),$ and $\phi=\pi_!^\th$ for a fixed degree $n\in\Z.$ The Universal Coefficient Theorem \cite[3.6.6]{Wei} then identifies $H^n(B;\Q)$ with $V^*$ and $H^{n-2r-2}(P;\Q)$ with $W^*.$ To verify the hypothesis \eqref{ExPredual} of Lemma~\ref{lemExPredual}, let $v\in H_n(B;\Q).$ As singular chains have compact support, we can pick a finite CW-complex $\bar B,$ map $i\colon \bar B\to B,$ and $\bar v\in H_n(\bar B;\Q)$ such that $i_*(\bar v)=v.$ Define $\bar\pi\colon\bar P=i^*(P)\to\bar B$ and $\bar\th=i^*(\th).$ Let $I\colon\bar P\to P$ be the canonical map. By the Serre spectral sequence and the degree-wise finite-dimensionality of the rational cohomology of $\G,$ $H_{n-2r-2}(\bar P;\Q)$ is finite-dimensional. Let $W_v=I_*(H_{n-2r-2}(\bar P;\Q)).$ Let $\be\in H^{n-2r-2}(P;\Q)$ be such that $\be|_{W_v}=0.$ Then $I^*(\be)=0$ and using naturality we find
\[
 \an{\phi(\be),v}=\an{\pi_!^\th(\be),i_*(\bar v)}=\an{i^*\pi_!^\th(\be),\bar{v}}\overset{\eqref{s1eqn3}}{=}\an{\bar\pi_!^{\bar\th}I^*(\be),\bar v}=0,
\]
so \eqref{ExPredual} holds.
\end{proof}

\section{Homological Lie brackets on moduli spaces}
\label{s5}

Before listing the additional data needed for the construction of a Lie bracket, we fix some notation. For sets $X_{\ep_1},\ldots, X_{\ep_p}$ and $\ep_{i_1},\ldots,\ep_{i_q}\in\{\ep_1,\ldots,\ep_p\},$ write $X_{\ep_{i_1},\ldots,\ep_{i_q}}=X_{\ep_{i_1}}\t\cdots\t X_{\ep_{i_q}}$ for the Cartesian product. Define also a map
\begin{align*}
 \tau^{\ep_1,\ldots,\ep_p}_{\ep_{i_1},\ldots,\ep_{i_q}}\colon X_{\ep_1,\ldots,\ep_p}&\longra X_{\ep_{i_1},\ldots,\ep_{i_q}},\\(m_1,\ldots,m_p)&\longmapsto(m_{i_1},\ldots,m_{i_q}),
\end{align*}
which projects onto and exchanges factors as indicated.

\begin{ass}
\label{ass:lie}
Let $\F$ and $\G$ be as in Assumption~\ref{standard-setup}. We abbreviate $G=\G.$ Let $M$ be a topological space and assume the following.
\begin{enumerate}
\item
${\Phi\colon M\t M\to M}$ is an operation that is associative and commutative up to homotopy. The set $\pi_0(M)$ of path-components of $M$ is then a commutative monoid. Write $M_\al\subset M$ for the path-component corresponding to $\al\in\pi_0(M)$ (so $\al=M_\al$). If we set $\Phi_{\al,\be}=\Phi\vert_{M_{\al,\be}},$ we can restate our assumption as
\[
\Phi_{\al+\be,\ga}\circ(\Phi_{\al,\be}\t\id_{M_\ga})\simeq \Phi_{\al,\be+\ga}\circ(\id_{M_\al}\t\Phi_{\be,\ga}),\quad \Phi_{\al,\be}\simeq \Phi_{\be,\al}\circ\tau^{\al,\be}_{\be,\al}.
\]
\item
${\Psi\colon G\t M\to M}$ is a group action such that the quotient map ${M\to M/G}$ is a principal $G$-bundle. We require $\Psi(g,\Phi(m_1,m_2))=\Phi(\Psi(g,m_1),\Psi(g,m_2))$ for all $m_1,m_2\in M$ and $g\in G,$ so the action distributes over $\Phi.$ Denote the quotient space by $B_\al=M_\al/G.$
\item
Let $\De^n G$ be the diagonal in $G^n.$ Let $P_{\al,\be,\ga}=M_{\al,\be,\ga}/\De^3G$ and $R_{\al,\be}=M_{\al,\be}/\De^2 G$ be the quotient spaces by the diagonal actions. Let $Q_{\al,\be,\ga}=R_{\al,\be}\t B_\ga=M_{\al,\be}/\De^2 G\t B_\ga.$ Equip these spaces with the $G$-actions
\begin{align*}
\ell_{P_{\al,\be,\ga}}\bigl(g,\De^3 G(m_\al,m_\be,m_\ga)\bigr)&=\De^3 G(gm_\al,gm_\be,m_\ga),\\
\ell_{Q_{\al,\be,\ga}}\bigl(g,(\De^2 G(m_\al,m_\be),b_\ga)\bigr)&=(\De^2 G(gm_\al,m_\be),b_\ga),\\
\ell_{R_{\al,\be}}\bigl(g,\De^2 G(m_\al,m_\be)\bigr)&=\De^2 G(gm_\al,m_\be).
\end{align*}
The quotient maps of these actions are denoted by
\begin{align*}
	\ka_{\al,\be,\ga}\colon P_{\al,\be,\ga}&\longra Q_{\al,\be,\ga},&\De^3 G(m_\al,m_\be,m_\ga)&\longmapsto(\De^2 G(m_\al,m_\be),Gm_\ga),\\
	\pi_{\al,\be}\t\id_{B_\ga}\colon Q_{\al,\be,\ga}&\longra B_{\al,\be,\ga},&(\De^2 G(m_\al,m_\be),b_\ga)&\longmapsto(Gm_\al, Gm_\be, b_\ga),\\
	\pi_{\al,\be}\colon R_{\al,\be}&\longra B_{\al,\be},&\De^2G(m_\al,m_\be)&\longmapsto(Gm_\al,Gm_\be),
\end{align*}
and are projections of principal $G$-bundles.
\item
For all $\al,\be\in\pi_0(M)$ there are orientations $\th_{\al,\be}\in K_{R_{\al,\be}}(B_{\al,\be})$ such that
\e
(\tau^{\al,\be}_{\be,\al})^*(\th_{\be,\al})=\breve\th_{\al,\be},\label{th-dual}
\e
where we use that $(\tau^{\al,\be}_{\be,\al})^*(R_{\be,\al})\cong R_{\al,\be}.$

The map $\Phi_{\al,\be}\t\id_{M_\ga}\colon M_{\al,\be,\ga}\to M_{\al+\be,\ga}$ determines an isomorphism $P_{\al,\be,\ga}\cong(\Phi_{\al,\be}\t\id_{B_\ga})^*(R_{\al+\be,\ga}).$ Moreover, the projections define isomorphisms $P_{\al,\be,\ga}\cong(\pi_{\al,\be}\t\id_{B_\ga})^*(\tau_{\al,\ga}^{\al,\be,\ga})^*(R_{\al,\ga})$ and $P_{\al,\be,\ga}\cong(\pi_{\al,\be}\t\id_{B_\ga})^*(\tau_{\be,\ga}^{\al,\be,\ga})^*(R_{\be,\ga}).$ Using these isomorphism, we require
\begin{multline}
\bigl(\Phi_{\al,\be}\t\id_{M_\ga}\bigr)^*(\th_{\al+\be,\ga})\\
=\bigl(\pi_{\al,\be}\t\id_{B_\ga}\bigr)^*\bigl(\tau^{\al,\be,\ga}_{\al,\ga}\bigl)^*(\th_{\al,\ga})+\bigl(\pi_{\al,\be}\t\id_{B_\ga}\bigr)^*\bigl(\tau^{\al,\be,\ga}_{\be,\ga}\bigr)^*(\th_{\be,\ga}).
\label{th-bilinear}
\end{multline}
\item
There are signs $\ep_{\al,\be}\in\{\pm1\}$ for all $\al,\be\in\pi_0(M)$ satisfying
\ea
\label{ep}
 \ep_{\al,\be}\cdot\ep_{\be,\al}&=(-1)^{\chi(\al,\be)+\chi(\al,\al)\chi(\be,\be)},&
 \ep_{\al,\be}\cdot\ep_{\al+\be,\ga}&=\ep_{\be,\ga}\cdot\ep_{\al,\be+\ga}.
\ea

\end{enumerate}
\end{ass}

By (e), the \emph{Euler form} $\chi(\al,\be)=\rk\th_{\al,\be}$ is symmetric and bi-additive.

\begin{rem}
Usually, the signs $\ep_{\al,\be}$ are determined geometrically and correspond to orientations; see \cite[\S8.3]{Joy}. These orientation problems are solved in the series \cite{JTU,JoUp1,JoUp2,JoUp3} using the excision technique of \cite{Upme}.
\end{rem}

\subsection{\texorpdfstring{Proof of Theorem~\ref{GradedLie}}{Proof of Theorem 1.5}}\label{sLie}

Clearly, the Lie bracket defined in \eqref{def:Lie} by
\[
[\ze,\eta]=\ep_{\al,\be}(-1)^{a\chi(\be,\be)}(\Phi_{\al,\be}/G)_*\circ(\pi_{\al,\be})_{\th_{\al,\be}}^!(\ze\t\eta)
\]
for $\ze\in H_a(M_\al/G)$ and $\eta\in H_b(M_\be/G)$ is bilinear. We prove skew symmetry. To compute $[\eta,\ze],$ note that the pullback principal $G$-bundle $(\tau^{\al,\be}_{\be,\al})^*(R_{\be,\al})$ is isomorphic to the dual bundle $\breve R_{\al,\be}.$ By \eqref{th-dual}, the pullback orientation on $\breve R_{\al,\be}$ is $\breve \th_{\al,\be}.$ Hence
\begin{align*}
 (\pi_{\be,\al})_{\th_{\be,\al}}^!\circ(\tau^{\al,\be}_{\be,\al})_*
 &=(\tau^{\al,\be}_{\be,\al})_*\circ\bigl(\breve\pi_{\al,\be}\bigr)^!_{\breve \th_{\al,\be}} &&\text{by nat.~\eqref{HomolNaturality}}\\
 &=(-1)^{\chi(\al,\be)+1}(\tau^{\al,\be}_{\be,\al})_*\circ(\pi_{\al,\be})_{\th_{\al,\be}}^!
&&\text{by duality \eqref{HomolDualAxiom}.}
\end{align*}
Combining this with $\Phi_{\al,\be}\simeq \Phi_{\be,\al}\circ\tau^{\al,\be}_{\be,\al}$ and the skew symmetry of the cross product, $\eta\t\ze=(-1)^{ab}{(\tau^{\al,\be}_{\be,\al})_*(\ze\t\eta)},$ we obtain
\[
[\eta,\ze]=\ep_{\be,\al}(-1)^{b\chi(\al,\al)+ab+\chi(\al,\be)+1}(\Phi_{\al,\be}/G)_*\circ(\pi_{\al,\be})_{\th_{\al,\be}}^!(\ze\t\eta).
\]
This differs from $[\ze,\eta]$ by the sign
\begin{multline*}
\ep_{\al,\be}(-1)^{a\chi(\be,\be)}\ep_{\be,\al}(-1)^{b\chi(\al,\al)+ab+\chi(\al,\be)+1}\\
\overset{\mathclap{\eqref{ep}}}{=}(-1)^{\chi(\al,\be)+\chi(\al,\al)\chi(\be,\be)+a\chi(\be,\be)+b\chi(\al,\al)+ab+\chi(\al,\be)+1}=(-1)^{1+|\ze|'|\eta|'},
\end{multline*}
where we recall the grading $|\ze|'=a+2-\chi(\al,\al).$ This proves skew symmetry.
\medskip

It remains only to prove the Jacobi identity
\e
(-1)^{|\ze|'|\la|'}\bigl[[\ze,\eta],\la\bigr]+(-1)^{|\eta|'|\ze|'}\bigl[[\eta,\la],\ze\bigr]+(-1)^{|\la|'|\eta|'}\bigl[[\la,\ze],\eta\bigr]=0\label{Jacobi}
\e
for $\ze\in H_a(M_\al/G),$ $\eta\in H_b(M_\be/G),$ and $\la\in H_c(M_\ga/G).$ 
Expanding the definition,
\begin{align*}
\bigl[[\ze,\eta],\la\bigr]
&=\ep_{\al,\be}(-1)^{a\chi(\be,\be)}\bigl[(\Phi_{\al,\be}/G)_*\circ(\pi_{\al,\be})_{\th_{\al,\be}}^!(\ze\t\eta),\la\bigr]\\
&=\ep_{\al,\be}(-1)^{a\chi(\be,\be)}\ep_{\al+\be,\ga}(-1)^{(a+b)\chi(\ga,\ga)}(\Phi_{\al+\be,\ga}/G)_*\\
&\quad\circ(\pi_{\al+\be,\ga})_{\th_{\al+\be,\ga}}^!\circ\bigl((\Phi_{\al,\be}/G)\t\id_{M_\ga/G}\bigr)_*\bigl((\pi_{\al,\be})_{\th_{\al,\be}}^!(\ze\t\eta)\t\la\bigr).
\end{align*}
To continue this calculation, observe that there is a pullback square
\[
 \begin{tikzcd}[column sep=15ex]
 P_{\al,\be,\ga}
 \arrow[d,"\ka_{\al,\be,\ga}"]\arrow[r,"(\Phi_{\al,\be}\t\id_{B_\ga})"] & R_{\al+\be,\ga}\arrow[d,"\pi_{\al+\be,\ga}"]\\
 Q_{\al,\be,\ga}
\arrow[r,"(\Phi_{\al,\be}/G)\t\id_{B_\ga}"] & B_{\al+\be}\t B_\ga
\end{tikzcd}
\]
and that the pullback orientation on $\ka_{\al,\be,\ga}$ is $(\tau_{\al,\ga}^{\al,\be,\ga})^*(\th_{\al,\ga})+(\tau^{\al,\be,\ga}_{\be,\ga})^*(\th_{\be,\ga})$ by \eqref{th-bilinear}.
By the dual of Corollary~\ref{Cor_EvenDegree_Many_Properties}(e), we have $(\pi_{\al,\be})_{\th_{\al,\be}}^!\t\id_{H_*(M_\ga/G)}=(\pi_{\al,\be}\t\id_{M_\ga/G})^!_{(\tau^{\al,\be,\ga}_{\al,\be})^*(\th_{\al,\be})}.$ Using these facts and naturality \eqref{HomolNaturality}, we get
\begin{equation}
\label{erster-ausdruck}
\begin{split}
&(-1)^{|\ze|'|\la|'}\bigl[[\ze,\eta],\la\bigr]=(-1)^{ac+a\chi(\be,\be)+b\chi(\ga,\ga)+c\chi(\al,\al)+\chi(\al,\al)\chi(\ga,\ga)}\ep_{\al,\be}\ep_{\al+\be,\ga}\\
 &\quad(\Phi_{\al+\be,\ga}/G)_*\circ\bigl((\Phi_{\al,\be}\t\id_{M_\ga})/G\bigr)_*\circ(\ka_{\al,\be,\ga})_{(\tau_{\al,\ga}^{\al,\be,\ga})^*(\th_{\al,\ga})+(\tau^{\al,\be,\ga}_{\be,\ga})^*(\th_{\be,\ga})}^!\\
 &\quad\circ(\pi_{\al,\be}\t\id_{M_\ga/G})_{(\tau^{\al,\be,\ga}_{\al,\be})^*(\th_{\al,\be})}^!(\ze\t\eta\t\la).
\end{split}
\end{equation}
Permuting $(a,b,c),$ $(\al,\be,\ga),$ and $(\ze,\eta,\la)$ cyclically in \eqref{erster-ausdruck} gives
\begin{equation}
\label{zweiter-ausdruck}
\begin{split}
	&(-1)^{|\eta|'|\ze|'}\bigl[[\eta,\la],\ze\bigr]=(-1)^{ba+b\chi(\ga,\ga)+c\chi(\al,\al)+a\chi(\be,\be)+\chi(\be,\be)\chi(\al,\al)}\ep_{\be,\ga}\ep_{\be+\ga,\al}\\
&\quad(\Phi_{\be+\ga,\al}/G)_*\circ\bigl((\Phi_{\be,\ga}\t\id_{M_\al})/G\bigr)_*\circ(\ka_{\be,\ga,\al})_{(\tau^{\be,\ga,\al}_{\be,\al})^*(\th_{\be,\al})+(\tau^{\be,\ga,\al}_{\ga,\al})^*(\th_{\ga,\al})}^!\\
&\quad\circ(\pi_{\be,\ga}\t\id_{M_\al/G})_{(\tau^{\be,\ga,\al}_{\be,\ga})^*(\th_{\be,\ga})}^!(\eta\t\la\t\ze)
\end{split}
\end{equation}
and
\begin{equation}
\label{dritter-ausdruck}
\begin{split}
&(-1)^{|\la|'|\eta|'}\bigl[[\la,\ze],\eta\bigr]=(-1)^{cb+c\chi(\al,\al)+a\chi(\be,\be)+b\chi(\ga,\ga)+\chi(\ga,\ga)\chi(\be,\be)}\ep_{\ga,\al}\ep_{\ga+\al,\be}\\
&\quad(\Phi_{\ga+\al,\be}/G)_*\circ\bigl((\Phi_{\ga,\al}\t\id_{M_\be})/G\bigr)_*\circ(\ka_{\ga,\al,\be})_{(\tau^{\ga,\al,\be}_{\ga,\be})^*(\th_{\ga,\be})+(\tau^{\ga,\al,\be}_{\al,\be})^*(\th_{\al,\be})}^!\\
&\quad\circ(\pi_{\ga,\al}\t\id_{M_\be/G})_{(\tau^{\ga,\al,\be}_{\ga,\al})^*(\th_{\ga,\al})}^!(\la\t\ze\t\eta).
\end{split}	
\end{equation}
We have computed all of the individual terms of the Jacobi identity \eqref{Jacobi}, which we will derive from the composition property \eqref{HomolComposition}. Consider
\begin{align*}
P_1&=Q_{\al,\be,\ga}=(M_\al\t M_\be)/G\t M_\ga/G\\
&\text{with }\ell_{P_1}\bigl(g,(\De^2 G(m_\al,m_\be),Gm_\ga)\bigr)=\bigl(\De^2G(gm_\al,m_\be),Gm_\ga\bigr),\\
P_2&=(M_\be\t M_\ga)/G\t M_\al/G\\
&\text{with }\ell_{P_2}\bigl(g,(\De^2 G(m_\be,m_\ga),Gm_\al)\bigr)=\bigl(\De^2 G(gm_\be,m_\ga),Gm_\al\bigr),
\end{align*}
and the obvious projections $\pi_1$ and $\pi_2$ to $B_{\al,\be,\ga}.$ We define the orientations on $P_1\cong(\tau^{\al,\be,\ga}_{\al,\be})^*(R_{\al,\be})$ and $P_2\cong(\tau^{\be,\ga,\al}_{\be,\ga})^*(R_{\be,\ga})$ by
\begin{align*}
 \th_1&=(\tau^{\al,\be,\ga}_{\al,\be})^*(\th_{\al,\be}),
&\th_2&=(\tau^{\be,\ga,\al}_{\be,\ga})^*(\th_{\be,\ga}).
\end{align*}
To define $\th_3$ on $P_3=P_1\ot_G P_2,$ use the pullback square
\[
 \begin{tikzcd}[column sep=8ex]
  \breve Q_{\ga,\al,\be}\rar["\mu"]\dar["\breve\pi_{\ga,\al}\t\id_{M_\be/G}" yshift=1.5mm] & P_1\ot_G P_2\dar["\pi_3"]\\
	B_{\ga,\al,\be}\rar["\tau^{\ga,\al,\be}_{\al,\be,\ga}"] & B_{\al,\be,\ga}\mathrlap{,}
 \end{tikzcd}
 \enskip
 \begin{tikzcd}[column sep=5ex,font=\footnotesize]
  \bigl((m_\ga,m_\al)\De^2G,m_\be G\bigr)\rar[mapsto] & 
  \smash{\begin{array}{@{}c@{}}
  	\bigl((m_\al,m_\be)\De^2 G,m_\ga G\bigr)\\\ot\bigl((m_\be,m_\ga)\De^2G,m_\al G\bigr)
  \end{array}}\\
  (m_\ga G,m_\al G, m_\be G)\rar[mapsto] & (m_\al G,m_\be G, m_\ga G)\mathrlap{.}
 \end{tikzcd}
 \]
 Observe here that $\mu$ is well-defined and $G$-equivariant. Define the orientation $\th_3$ on $\pi_3$ by requiring its pullback to $\breve Q_{\ga,\al,\be}$ to be $(\tau^{\ga,\al,\be}_{\ga,\al})^*(\breve \th_{\ga,\al}).$ In particular, by naturality \eqref{HomolNaturality} and duality \eqref{HomolDualAxiom} we have
\e
\label{JAC0}
 (\pi_3)^!_{\th_3}\circ(\tau_{\al,\be,\ga}^{\ga,\al,\be})_* = (-1)^{\chi(\al,\ga)+1}\mu_*\circ(\pi_{\ga,\al}\t\id_{M_\be/G})^!_{(\tau^{\ga,\al,\be}_{\ga,\al})^*(\th_{\ga,\al})}.
\e
To evaluate the terms in \eqref{HomolComposition}, we must identify the involved bundles and pullback orientations. For these bundles $P_1,P_2,P_3$ we use the notation \eqref{composition}, in particular, the projections $\ka_1\colon \pi_1^*(P_2)\to P_1,$ $\ka_2\colon \pi_2^*(P_1)\to P_2,$ and $\ka_3\colon P_1\t_B P_2\to P_3.$ There are pullback diagrams of principal $G$-bundles
\[
 \begin{tikzcd}[column sep=huge]
  Q_{\be,\ga,\al}\dar["\pi_{\be,\ga}\t\id_{M_\al/G}" yshift=1.5mm]\rar["\id_{P_2}"] & P_2\dar["\pi_2"] &   \breve P_{\be,\ga,\al}\rar["\xi_1"]\dar["\breve\ka_{\be,\ga,\al}"] & \pi_2^*(P_1)\dar["\ka_2"]\\
  B_{\be,\ga,\al}\rar["\tau^{\be,\ga,\al}_{\al,\be,\ga}"] & B_{\al,\be,\ga}\mathrlap{,}  & Q_{\be,\ga,\al}\rar["\id_{P_2}"] & P_2\mathrlap{,}
  \end{tikzcd}
\]
where $\xi_1(\De^3G(m_\be,m_\ga,m_\al))$ has fiber coordinate $(\De^2G(m_\al,m_\be),Gm_\ga)$ in $P_1$ and base coordinate $(\De^2G(m_\be,m_\ga),Gm_\al)$ in $P_2.$ Hence by naturality \eqref{HomolNaturality}
\ea
\label{JAC1}
 (\pi_2)^!_{\th_2}\circ(\tau^{\be,\ga,\al}_{\al,\be,\ga})_*&=(\pi_{\be,\ga}\t\id_{M_\al/G})^!_{(\tau^{\be,\ga,\al}_{\be,\ga})^*(\th_{\be,\ga})},\\
\label{JAC2}
(\ka_2)^!_{\ka_1^*\th_1+\ka_3^*\th_3}&=(\xi_1)_*\circ(\breve\ka_{\be,\ga,\al})^!_{(\tau^{\be,\ga,\al}_{\al,\be})^*(\th_{\al,\be})+(\tau_{\ga,\al}^{\be,\ga,\al})^*(\breve\th_{\ga,\al})}.
\ea
In \eqref{JAC2}, we use that the pullback of the orientation $\ka_1^*\th_1$ along $\xi_1$ is $\xi_1^*\ka_1^*\th_1=(\tau_{\al,\be}^{\al,\be,\ga}\circ\ka_1\circ\xi_1)^*(\th_{\al,\be})=(\tau_{\al,\be}^{\be,\ga,\al})^*(\th_{\al,\be}).$ There is a commutative diagram
\[
\begin{tikzcd}
	& \breve P_{\be,\ga,\al}\rar{\xi_1}\arrow[dl,"\tau_{\ga,\al}^{\ga,\al,\be}"' xshift=3mm,yshift=1mm]\dar & P_1\t_B P_2\dar{\ka_3}\\
	\breve R_{\ga,\al} & \breve Q_{\ga,\al,\be}\arrow[l,"{\tau_{\ga,\al}^{\ga,\al,\be}}"' xshift=1ex]\rar{\mu} & P_3,
\end{tikzcd}
\]
where we slightly abuse the notation $\tau_{\ga,\al}^{\ga,\al,\be}$ to denote the maps induced by the projection onto the first two components. By definition, $\th_3=\mu^*(\tau_{\ga,\al}^{\ga,\al,\be})^*(\breve\th_{\ga,\al}),$ so the diagram shows that the pullback of $\ka_3^*\th_3$ along $\xi_1$ is $(\tau_{\ga,\al}^{\be,\ga,\al})^*(\breve\th_{\ga,\al}).$ Using duality \eqref{HomolDualAxiom} and \eqref{th-dual}, \eqref{JAC2} becomes
\[
(\ka_2)^!_{\ka_1^*\th_1+\ka_3^*\th_3}
=(-1)^{\chi(\al,\be)+\chi(\al,\ga)+1}(\xi_1)_*\circ(\ka_{\be,\ga,\al})^!_{(\tau^{\be,\ga,\al}_{\be,\al})^*(\th_{\be,\al})+(\tau_{\ga,\al}^{\be,\ga,\al})^*(\th_{\ga,\al})}.
\]
Combining this with \eqref{JAC1} gives
\begin{multline}
\label{JAC3A}
(\ka_2)^!_{\ka_1^*\th_1+\ka_3^*\th_3}\circ(\pi_2)^!_{\th_2}\circ(\tau^{\be,\ga,\al}_{\al,\be,\ga})_*=(-1)^{\chi(\al,\be)+\chi(\al,\ga)+1}
\\
(\xi_1)_* (\ka_{\be,\ga,\al})^!_{(\tau^{\be,\ga,\al}_{\be,\al})^*(\th_{\be,\al})+(\tau_{\ga,\al}^{\be,\ga,\al})^*(\th_{\ga,\al})}\ (\pi_{\be,\ga}\t\id_{M_\al/G})^!_{(\tau^{\be,\ga,\al}_{\be,\ga})^*(\th_{\be,\ga})},
\end{multline}
which relates the 1\textsuperscript{st} term of \eqref{HomolComposition} with \eqref{zweiter-ausdruck}. For the 2\textsuperscript{nd} term, there is a pullback
\[
 \begin{tikzcd}[column sep=6ex]
 	P_{\al,\be,\ga}\rar["\xi_2"]\dar["\ka_{\al,\be,\ga}"] & \pi_1^*(P_2)\dar["\ka_1"']\\
 	Q_{\al,\be,\ga}\rar["\id_{P_1}"] & P_1\mathrlap{,}
\end{tikzcd}
\]
where $\xi_2(\De^3G(m_\al,m_\be,m_\ga))$ has fiber coordinate $\bigl(\De^2G(m_\be,m_\ga),Gm_\al\bigr)$ in $P_2$ and base coordinate $(\De^2 G(m_\al,m_\be),Gm_\ga)$ in $P_1.$ Hence naturality \eqref{HomolNaturality} implies
\begin{align}
\label{JAC4A}
 &(\ka_1)^!_{\ka_2^*\th_2+\ka_3^*\th_3}\circ(\pi_1)^!_{\th_1}\\
 &=\;(\xi_2)_*\circ(\ka_{\al,\be,\ga})^!_{\cramped{(\tau^{\al,\be,\ga}_{\be,\ga})^*(\th_{\be,\ga})+(\tau^{\al,\be,\ga}_{\ga,\al})^*(\breve\th_{\ga,\al})}}\circ(\pi_{\al,\be}\t\id_{M_\ga/G})^!_{(\tau^{\al,\be,\ga}_{\al,\be})^*(\th_{\al,\be})}\notag\\
&\overset{\mathclap{\eqref{th-dual}}}{=}\;(\xi_2)_*\circ(\ka_{\al,\be,\ga})^!_{(\tau^{\al,\be,\ga}_{\be,\ga})^*(\th_{\be,\ga})+(\tau^{\al,\be,\ga}_{\al,\ga})^*(\th_{\al,\ga})}\circ(\pi_{\al,\be}\t\id_{M_\ga/G})^!_{(\tau^{\al,\be,\ga}_{\al,\be})^*(\th_{\al,\be})}.\notag
\end{align}
This relates the 2\textsuperscript{nd} term of \eqref{HomolComposition} with \eqref{erster-ausdruck}. Finally, there is a pullback square
\[
 \begin{tikzcd}[column sep=5ex]
  \breve P_{\ga,\al,\be}\rar["\xi"]\dar["\breve\ka_{\ga,\al,\be}"] & P_1\t_B P_2\dar["\ka_3"]\\
  \breve Q_{\ga,\al,\be}\rar["\mu"] & P_1\ot_G P_2\mathrlap{,}
 \end{tikzcd}
\]
where $\xi(\De^3G(m_\ga,m_\al,m_\be))=\bigl((\De^2 G(m_\al,m_\be),Gm_\ga),(\De^2G(m_\be,m_\ga),Gm_\al)\bigr),$ recalling the convention for $\ell_{P_1\t_B P_2}$ from Page~\pageref{ConventionFPaction} to see that $\xi$ is equivariant. Since $\ka_1^*\breve\th_1+\ka_2^*\th_2$ pulls back to $(\tau^{\ga,\al,\be}_{\al,\be})^*(\breve\th_{\al,\be})+(\tau^{\ga,\al,\be}_{\be,\ga})^*(\th_{\be,\ga})$ along $\xi,$ we have
\begin{align*}
&(\ka_3)^!_{\ka_1^*\breve\th_1+\ka_2^*\th_2}\circ\mu_*=\xi_*\circ(\breve\ka_{\ga,\al,\be})^!_{(\tau_{\al,\be}^{\ga,\al,\be})^*(\breve\th_{\al,\be})+(\tau_{\be,\ga}^{\ga,\al,\be})^*(\th_{\be,\ga})} && \text{by \eqref{HomolNaturality}}\\
 &=(-1)^{\chi(\al,\be)+\chi(\be,\ga)+1}\;\xi_*\circ(\ka_{\ga,\al,\be})^!_{(\tau_{\al,\be}^{\ga,\al,\be})^*(\th_{\al,\be})+(\tau_{\ga,\be}^{\ga,\al,\be})^*(\th_{\ga,\be})} && \text{by \eqref{HomolDualAxiom} and \eqref{th-dual}}.
\end{align*}
If we precompose both sides of the expression with $(\pi_{\ga,\al}\t\id_{M_\be/G})^!_{(\tau^{\ga,\al,\be}_{\ga,\al})^*(\th_{\ga,\al})}$ and use \eqref{JAC0}, we obtain
\begin{multline}
\label{JAC4B}
	(\ka_3)^!_{\ka_1^*\breve\th_1+\ka_2^*\th_2}\circ(\pi_3)^!_{\th_3}\circ(\tau_{\al,\be,\ga}^{\ga,\al,\be})_*=
	(-1)^{\chi(\al,\ga)+\chi(\al,\be)+\chi(\be,\ga)}\\
	\xi_*\circ(\ka_{\ga,\al,\be})^!_{(\tau_{\al,\be}^{\ga,\al,\be})^*(\th_{\al,\be})+(\tau_{\ga,\be}^{\ga,\al,\be})^*(\th_{\ga,\be})}\circ(\pi_{\ga,\al}\t\id_{M_\be/G})^!_{(\tau^{\ga,\al,\be}_{\ga,\al})^*(\th_{\ga,\al})},
\end{multline}
which relates the 3\textsuperscript{rd} term of \eqref{HomolComposition} with \eqref{dritter-ausdruck}. Recall that in \eqref{HomolComposition} the (non-equivariant) homeomorphisms $\pi_2^*(P_1)\cong P_1\t_B P_2\cong \pi_1^*(P_2)$ are implicit. The maps $\xi_1,$ $\xi_2,$ and $\xi$ fit into a commutative diagram
\vskip-0.75\baselineskip
\[
\begin{tikzcd}
	\breve P_{\be,\ga,\al}\dar["\xi_1"]\arrow[rr,bend left=15,"\tau^{\be,\ga,\al}_{\al,\be,\ga}"] & \breve P_{\ga,\al,\be}\dar["\xi"]\rar["\tau^{\ga,\al,\be}_{\al,\be,\ga}"']\lar["\tau^{\ga,\al,\be}_{\be,\ga,\al}"] & P_{\al,\be,\ga}\dar["\xi_2"]\\
	\pi_2^*(P_1) & P_1\t_B P_2\rar["\cong"]\lar["\cong"'] & \pi_1^*(P_2)\mathrlap{.}
\end{tikzcd}
\]
Using \eqref{JAC3A}, \eqref{JAC4A}, and \eqref{JAC4B}, we find that the composition of \eqref{HomolComposition} with $(\xi_2)^{-1}_*$ multiplied by $(-1)^{\chi(\al,\ga)}$ is
\begin{multline*}
	(-1)^{\chi(\al,\be)+1}(\tau_{\al,\be,\ga}^{\be,\ga,\al})_*\circ(\ka_{\be,\ga,\al})^!\circ(\pi_{\be,\ga}\t\id_{M_\al/G})^!\circ(\tau_{\al,\be,\ga}^{\be,\ga,\al})_*^{-1}\\
	+(-1)^{\chi(\al,\ga)+1}(\ka_{\al,\be,\ga})^!\circ(\pi_{\al,\be}\t\id_{M_\ga/G})^!\\
	=(-1)^{\chi(\be,\ga)}(\tau_{\al,\be,\ga}^{\ga,\al,\be})_*\circ(\ka_{\ga,\al,\be})^!\circ(\pi_{\ga,\al}\t\id_{M_\be/G})^!\circ(\tau_{\al,\be,\ga}^{\ga,\al,\be})_*^{-1},
\end{multline*}
omitting the orientations from the notation from now on.
Post-compose this expression with the maps induced by $(\Phi_{\al,\be}\t\id_{M_\ga})/G\colon P_{\al,\be,\ga}\to(M_{\al+\be}\t M_\ga)/G$ and $\Phi_{\al+\be,\ga}/G\colon (M_{\al+\be}\t M_\ga)/G\to M_{\al+\be+\ga}/G$ in homology and evaluate at the homology class $\ze\t\eta\t\la.$ Using the formulas
\begin{align*}
 (\tau_{\al,\be,\ga}^{\be,\ga,\al})_*^{-1}(\ze\t\eta\t\la) &= (-1)^{a(b+c)}(\eta\t\la\t\ze),\\
(\tau_{\al,\be,\ga}^{\ga,\al,\be})_*^{-1}(\ze\t\eta\t\la) &=(-1)^{(a+b)c}(\la\t\ze\t\eta),\\
(\Phi_{\al+\be,\ga}/G)_*\bigl((\Phi_{\al,\be}\t\id_{M_\ga})/G\bigr)_*(\tau_{\al,\be,\ga}^{\be,\ga,\al})_*&=(\Phi_{\be+\ga,\al}/G)_*\bigl((\Phi_{\be,\ga}\t\id_{M_\al})/G\bigr)_*,\\
 (\Phi_{\al+\be,\ga}/G)_*\bigl((\Phi_{\al,\be}\t\id_{M_\ga})/G\bigr)_*(\tau_{\al,\be,\ga}^{\ga,\al,\be})_*&=(\Phi_{\ga+\al,\be}/G)_*\bigl((\Phi_{\ga,\al}\t\id_{M_\be})/G\bigr)_*,
\end{align*}
we obtain
\begin{multline*}
\begin{multlined}
	(-1)^{\chi(\al,\be)+1+a(b+c)}\\(\Phi_{\be+\ga,\al}/G)_*\bigl((\Phi_{\be,\ga}\t\id_{M_\al})/G\bigr)_*(\ka_{\be,\ga,\al})^!(\pi_{\be,\ga}\t\id_{M_\al/G})^!(\eta\t\la\t\ze)
\end{multlined}
\\
\begin{multlined}
	+(-1)^{\chi(\al,\ga)+1}\\(\Phi_{\al+\be,\ga}/G)_*\bigl((\Phi_{\al,\be}\t\id_{M_\ga})/G\bigr)_*(\ka_{\al,\be,\ga})^!(\pi_{\al,\be}\t\id_{M_\ga/G})^!(\ze\t\eta\t\la)
\end{multlined}
\\
\begin{multlined}
=(-1)^{\chi(\be,\ga)+(a+b)c}\\(\Phi_{\ga+\al,\be}/G)_*\bigl((\Phi_{\ga,\al}\t\id_{M_\be})/G\bigr)_*(\ka_{\ga,\al,\be})^!(\pi_{\ga,\al}\t\id_{M_\be/G})^!(\la\t\ze\t\eta).
\end{multlined}
\end{multline*}
Bring all terms to the right-hand side, multiply by $(-1)^{ac+a\chi(\be,\be)+b\chi(\ga,\ga)+c\chi(\al,\al)}$ and use \eqref{erster-ausdruck}, \eqref{zweiter-ausdruck}, and \eqref{dritter-ausdruck} to get
\begin{multline*}
(-1)^{\chi(\al,\be)+\chi(\al,\al)\chi(\be,\be)}\ep_{\be,\ga}\ep_{\be+\ga,\al}(-1)^{|\eta|'|\ze|'}[[\eta,\la],\ze]\\
+(-1)^{\chi(\al,\ga)+\chi(\al,\al)\chi(\ga,\ga)}\ep_{\al,\be}\ep_{\al+\be,\ga}(-1)^{|\ze|'|\la|'}[[\ze,\eta],\la]\\
+(-1)^{\chi(\be,\ga)+\chi(\be,\be)\chi(\ga,\ga)}\ep_{\ga,\al}\ep_{\ga+\al,\be}(-1)^{|\la|'|\eta|'}[[\la,\ze],\eta]=0.
\end{multline*}
Finally, use \eqref{ep} to check that all of the appearing signs agree, that is,
\begin{multline*}
 (-1)^{\chi(\al,\be)+\chi(\al,\al)\chi(\be,\be)}\ep_{\be,\ga}\ep_{\be+\ga,\al}=(-1)^{\chi(\al,\ga)+\chi(\al,\al)\chi(\ga,\ga)}\ep_{\al,\be}\ep_{\al+\be,\ga}\\=(-1)^{\chi(\be,\ga)+\chi(\be,\be)\chi(\ga,\ga)}\ep_{\ga,\al}\ep_{\ga+\al,\be}.
\end{multline*}
The Jacobi identity \eqref{Jacobi} follows.
\null\nobreak\hfill\ensuremath{\square}

%
%
%
%
%

\appendix
\section{Elementary technical facts}
\label{sAppdx}

The first goal of this appendix is to prove an unfamiliar identity for the binomial coefficients $\binom{n}{k}$ with integer $n$ and $k.$ These are defined by
\[
 \binom{n}{k}
=
\begin{cases}
\prod_{i=1}^k\frac{n-i+1}{i}&\text{if $k>0,$}\\
1&\text{if $k=0,$}\\
0&\text{if $k<0.$}
\end{cases}
\]
Recall the standard properties
\begin{align}
k<0&\text{ or } 0\leqslant n<k&&\iff\binom{n}{k}=0,\label{B0}\\
k\in\Z&\text{ and }0\leqslant n&&\implies\binom{n}{k}=\binom{n}{n-k},\label{B1}\\
k\in\Z&\text{ and }n\in\Z&&\implies\binom{n}{k}=(-1)^k\binom{k-n-1}{k},\label{B2}\\
0\leqslant h+k&\text{ and } 0\leqslant n&&\implies\binom{n-h}{k}\binom{n}{h}=\binom{n}{n-h-k}\binom{h+k}{k}.\label{B2A}
\end{align}
The following result may be viewed as an extension of \eqref{B1} to integer $n.$

\begin{prop}
For all integers\/ $n$ and\/ $k$ we have
\e
\label{B3}
(-1)^k\binom{k-n-1}{-n-1}
=
\begin{cases}
\binom{n}{k}-\binom{n}{n-k} & \text{if\/ $n\geqslant 0,$}\\
-\binom{n}{n-k} & \text{if\/ $n<0$ and\/ $k\leqslant n,$}\\
\binom{n}{k} & \text{if\/ $n<0$ and\/ $k>n.$}
\end{cases}
\e
Hence by \eqref{B0} in every case we have
\e
\label{B4}
 (-1)^k\binom{k-n-1}{-n-1}=\binom{n}{k}-\binom{n}{n-k}.
\e
\end{prop}

\begin{proof}
Suppose $n\geqslant 0.$ Then the left hand side of \eqref{B3} vanishes by \eqref{B0} and the right hand side vanishes by \eqref{B1}.
Suppose $n<0$ and $k\leqslant n.$ Then
\[
\binom{n}{n-k}\overset{\mathclap{\eqref{B2}}}{\;=\;}(-1)^{n-k}\binom{-k-1}{n-k}\overset{\mathclap{\eqref{B1}}}{\;=\;}(-1)^{n-k}\binom{-k-1}{-n-1}\overset{\mathclap{\eqref{B2}}}{\;=\;}(-1)^{k+1}\binom{k-n-1}{-n-1}.
\]
Finally, suppose $n<0$ and $k>n.$ Then
\[
	\binom{n}{k}\overset{\mathclap{\eqref{B2}}}{\;=\;}(-1)^k\binom{k-n-1}{k}\overset{\mathclap{\eqref{B1}}}{\;=\;}(-1)^k\binom{k-n-1}{-n-1}.\qedhere
\]
\end{proof}

The second goal of the appendix is to explain how to define a morphism of possibly infinite-dimensional vector spaces over a field $\K$ in terms of its \emph{algebraic} dual. The following is the key result. Let $\ev_v=\an{-,v}$ denote the evaluation at $v.$

\begin{lem}
Let\/ $V$ be a vector space over\/ $\K.$ The image of the embedding\/ $\ev\colon V\to V^{**},$ $v\mapsto\ev_v,$ into the double dual of\/ $V$ is the set of all morphisms\/ $\ep\colon V^*\to\K$ for which there exists a finite-dimensional subspace\/ $V_\ep\subset V$ such that
\e
\label{ImageDD}
 \al\in V^*, \al|_{V_\ep}=0\implies\ep(\al)=0.
\e
\end{lem}

\begin{proof}
For $\ep=\ev_v$ the property \eqref{ImageDD} holds for the span $V_\ep$ of $v.$ Conversely, given $V_\ep$ satisfying \eqref{ImageDD}, pick a basis $(v_i)_{i=1,\ldots,n}$ of $V_\ep$ and let $(v_i^*)_{i=1,\ldots,n}$ be the dual basis of $V_\ep^*.$ Note that for all $\al\in V^*$ its restriction to $V_\ep$ is $\sum_{i=1}^n\al(v_i)v_i^*.$ Extend $v_i^*$ arbitrarily to $V.$ Define $v=\sum_{i=1}^n\ep(v_i^*)v_i.$ We have $\ep=\ev_v,$ since for all $\al\in V^*$
\[
 (\ep-\ev_v)(\al)=\ep(\al)-\sum_{i=1}^n\ep(v_i^*)\al(v_i)=\ep\Bigl(\al-\sum_{i=1}^n\al(v_i)v_i^*\Bigr)\overset{\eqref{ImageDD}}{=}0.\qedhere
\]
\end{proof}

\begin{lem}
\label{lemExPredual}
Let\/ $V$ and\/ $W$ be vector spaces over\/ $\K$ and let\/ $\phi\colon W^*\to V^*$ be a morphism. There exists a \textup(unique\textup) morphism\/ $f\colon V\to W$ with\/ $\phi=f^*$ if and only if for every\/ $v\in V$ there exists a finite-dimensional subspace\/ $W_v\subset W$ such that
\e
\label{ExPredual}
 \be\in W^*,\be|_{W_v}=0\implies\an{\phi(\be),v}=0.
\e
\end{lem}

\begin{proof}
If $\phi=f^*$ and $v\in V,$ then \eqref{ExPredual} holds for the span $W_v$ of $f(v).$ Conversely, given \eqref{ExPredual} we can construct $f$ in the commutative square
\[
\begin{tikzcd}
V^{**}\ar[r,"\phi^*"] & W^{**}\\
V\ar[u,"\ev"]\arrow[r,dotted,"f"] & W\ar[u,"\ev"]
\end{tikzcd}
\]
by showing for each $v\in V$ that $\ep=\phi^*(\ev_v)$ is in the image of $W.$ Define $W_\ep=W_v$ and check \eqref{ImageDD}: if $\be\in W^*$ satisfies $\be|_{W_\ep}=0,$ then $\ep(\be)=\an{\phi(\be),v}\overset{\eqref{ExPredual}}{=}0.$
\end{proof}

\medskip

\noindent Department of Mathematics, University of Aberdeen, Fraser Noble Building, Elphinstone Rd, Aberdeen, AB24 3UE, U.K.\\ E-mail: {\tt markus.upmeier@abdn.ac.uk.}

\end{document}